\pdfoutput=1
\documentclass[reqno]{amsart}
\usepackage{amsmath,amsthm,amssymb,mathtools}
\IfFileExists{newtxtext.sty}{\usepackage{newtxtext,newtxmath}}{\usepackage{txfonts}}
\usepackage{microtype}
\usepackage{graphicx,booktabs,array,enumitem,xcolor,placeins}
\graphicspath{{figures/}}
\usepackage{caption}
\usepackage{subcaption}
\usepackage[numbers,sort&compress]{natbib}
\usepackage[colorlinks,linkcolor=blue,citecolor=blue,urlcolor=blue]{hyperref}
\usepackage[nameinlink,noabbrev,capitalise]{cleveref}
\numberwithin{equation}{section}
\theoremstyle{plain}
\newtheorem{theorem}{Theorem}[section]
\newtheorem{proposition}[theorem]{Proposition}
\newtheorem{lemma}[theorem]{Lemma}
\newtheorem{corollary}[theorem]{Corollary}
\theoremstyle{definition}
\newtheorem{assumption}[theorem]{Assumption}
\theoremstyle{remark}
\crefname{assumption}{Assumption}{Assumptions}
\Crefname{assumption}{Assumption}{Assumptions}
\newcommand{\E}{\mathbb E}
\newcommand{\R}{\mathbb R}
\newcommand{\A}{\mathcal A}
\newcommand{\C}{\mathcal C}
\newcommand{\calU}{\mathcal U}

\DeclareMathOperator{\sgn}{sgn}
\begin{document}

\title[Consumption adjustment with mean-reverting risk premia]{Consumption adjustment with mean-reverting risk premia: A singular stochastic control approach}

\author{Gugyum Ha}
\address{Department of Mathematics, Sogang University, 35 Baekbeom-ro, Mapo-gu, Seoul 04107, Republic of Korea}
\email{ggha@sogang.ac.kr}

\author{Junkee Jeon}
\address{Department of Applied Mathematics, Kyung Hee University, 1732 Deogyeong-daero, Giheung-gu, Yongin-si, Gyeonggi-do 17104, Republic of Korea}
\email{junkeejeon@khu.ac.kr}

\author{Jehan Oh}
\address{Department of Mathematics, Kyungpook National University, 80 Daehak-ro, Buk-gu, Daegu 41566, Republic of Korea}
\email{jehan.oh@knu.ac.kr}

\subjclass[2020]{Primary 93E20; Secondary 91G10, 60G40, 35R35, 35K65}
\keywords{consumption adjustment, predictable returns, Dynkin game, free boundary regularity, singular control, consumption ratcheting}

\begin{abstract}
We study a two-sided singular stochastic control problem in which changes in consumption carry utility costs and the risk premium follows the mean-reverting Kim--Omberg model. Optimal consumption is reflected between two moving barriers given by thresholds that depend on the current risk premium. These thresholds are the stopping boundaries of a Dynkin game for a two-dimensional diffusion driven by a single Brownian motion. A sharp occupation estimate shows that they are Lipschitz in logarithmic scale, uniformly in the cost of reducing consumption, and that smooth fit holds along the diffusion direction at every contact point. Outside a bounded interval of risk premia and at most two further points, the thresholds are infinitely differentiable. As the reduction cost grows, optimal consumption converges to the ratcheting policy.
\end{abstract}

\maketitle

\section{Introduction}
Households rebalance their portfolios far more often than they revise their standard of living. Choi, Jeon, and Koo \cite{CJK} model this reluctance by utility costs for upward and downward changes in consumption. With constant investment opportunities their optimal policy is described by two constant wealth--consumption ratios. Consumption is raised when the ratio exceeds the upper level, lowered when it falls below the lower level, and left unchanged in between. In this paper we keep these preferences and let the expected excess return of the risky asset follow the mean reverting diffusion of Kim and Omberg \cite{KO}. As in Wachter \cite{Wachter}, the risky asset spans the single Brownian shock. This is the market studied by Ferrari and Sch\"utz \cite{FS26}, who add labor income and a dynamic no borrowing constraint. With utility costs of adjusting consumption in place of their constraint, the two ratios of \cite{CJK} depend on the current risk premium, and the two algebraic equations that determine them are replaced by the free boundaries of a degenerate problem in two state variables.

We describe the problem by the current risk premium and the ratio of the shadow price of wealth to the marginal utility of current consumption. In these variables the marginal value of consumption solves a double obstacle equation whose obstacles are the two utility costs. Consumption is increased when the normalized shadow price falls below a lower threshold and reduced when it rises above an upper threshold. The consumption problem is a two sided singular stochastic control problem, and as for other problems of this kind \cite{BK98,Boetius,CK96,DAF14} the marginal value is the value of a Dynkin game whose stopping regions are the two contact sets (\cref{sec:potentials}). A single Brownian motion drives both state variables, so the diffusion matrix has rank one at every point. We construct the solution by monotone penalization. The penalized solutions grow linearly in the shadow price, and we compare them through their mild formulation.

The analysis of the free boundaries rests on a sharp bound for the sensitivity of the marginal value to the risk premium relative to its sensitivity to the shadow price. We derive it from a linearized penalty equation and a supersolution in one variable. It shows that both thresholds are Lipschitz in logarithmic scale, uniformly in the cost of reducing consumption. Coordinates that straighten the diffusion direction then give locally bounded second derivatives along that direction and continuous smooth fit along it at every contact point. On an explicit set of risk premia the same coordinates turn the problem into a parabolic obstacle problem in one space variable. There the derivative with respect to the shadow price solves an equation with a positive source, which gives nondegeneracy, and the parabolic boundary Harnack inequalities of Kukuljan \cite{Kukuljan} make the thresholds infinitely differentiable. All constants are uniform in the cost of reducing consumption, so the estimates carry over to the limit of infinite cost, a single obstacle problem whose solution is given by a perpetual American put on the shadow price.

The explicit set leaves out a bounded central interval $[-B_*,B_*]$ of risk premia and at most two further points. The interval always contains zero and can carry most of the stationary distribution of the premium. In the numerical example of \cref{sec:numerics} it contains the long run mean and has stationary probability $0.6544$. Whether the thresholds are continuously differentiable over it is open. The verification of the optimal strategy uses only the Lipschitz bound and the smooth fit along the diffusion, so the optimization problem is solved on the whole state space, including the premia in this interval. All results are stated for relative risk aversion greater than one, without labor income, in the complete market of \cref{sec:model}.

The degeneracy differs from that of the Heston model, whose operator is strictly elliptic at positive variance when the asset and variance shocks are imperfectly correlated. We follow the order of argument used for Heston American puts by Jeon, Kim, and Oh \cite{JKO} and in the weighted obstacle estimates of Daskalopoulos and Feehan \cite{DF}, with estimates for the solution first, then the geometry of the contact set, and finally the regularity of the boundary. Their ellipticity assumptions fail here, so the estimates are proved directly and the regions in which the higher order argument applies are identified explicitly.

The marginal equation determines the thresholds but not the level of the dual value. The two contact reactions have resolvent potentials which solve a switching system, and integrating these potentials over consumption levels gives the dual value without an undetermined function of the risk premium. A scalar minimization over the multiplier exhausts the budget, and martingale representation gives the financing portfolio. These steps adapt the layer decomposition and the duality arguments of reversible investment and singular control \cite{AE96,BK98,FP14,DAF14,KS98} to the degenerate setting, and they yield optimal consumption, wealth and investment on the whole admissible state space from the Lipschitz bound and the smooth fit alone.

When consumption may never be reduced, the marginal equation has a single obstacle. Optimal consumption is then the running maximum of a moving barrier, and the investor is solvent exactly when wealth covers the cost of keeping the current consumption level forever. As the cost of reducing consumption grows, the values decrease to the ratcheting value, the increase thresholds converge locally uniformly and the reduction thresholds escape to infinity. Optimal consumption converges uniformly on compact time intervals, first for a fixed multiplier and then, because the optimal multipliers converge, for a fixed initial wealth above the solvency floor. With constant investment opportunities an explicit calculation shows that the reduction threshold in wealth units converges to a level strictly below the solvency floor, so the limit of the thresholds differs from the threshold of the limit.

Predictable returns also affect the portfolio. It contains a hedge against changes in investment opportunities which remains when consumption is adjusted, so the result of \cite{CJK} that the risky share at an adjustment threshold equals the myopic share of a CRRA investor does not carry over. The normalized intervention problem does not depend on risk aversion, while its translation into consumption and wealth does.

Our work builds on the consumption and investment problems of Merton \cite{M69,M71} and on the martingale approach for complete markets \cite{CH89,KLS87,KS98}. Predictable investment opportunities are studied in \cite{KO,Wachter,Barberis}, and consumption commitments and illiquid consumption goods in \cite{GL90,CS07,CS16}, where adjustment costs are paid in cash, whereas ours are measured in utility. Consumption ratcheting goes back to Dybvig \cite{Dybvig} and is studied further in \cite{Riedel,JKS18}, and related drawdown and shortfall aversion formulations appear in \cite{ABY19,BHYY26,DLPY22,GHR20,LYZ24}.

On the mathematical side, the connections among reversible investment, singular control, switching and stopping games are developed in \cite{AE96,BK98,Boetius,CK96,FP14}. Closest to our setting, De Angelis and Ferrari \cite{DAF14} characterize a partially reversible investment problem on a finite horizon through a Dynkin game with two moving boundaries. Smooth fit and continuity of stopping boundaries are studied in \cite{Jacka,DAP,DAStabile}, and general stopping theory and weak generators in \cite{PS06,Peskir25}. We use comparison and stability for viscosity solutions \cite{CIL,FS06}, interior hypoellipticity \cite{Hormander}, and linear parabolic boundary estimates \cite{Lieberman,Kukuljan}. Classical and recent regularity results for obstacle problems are developed in \cite{KN,CPS,BDM,AK}. They require specific solution classes and geometric hypotheses, and the difference between intrinsic and Euclidean regularity is important for kinetic operators \cite{Bowman}. Analytic studies of exercise boundaries in option pricing include \cite{ChenChadam}. The reflection construction is related to \cite{BKR}, and the numerical approximations to \cite{BS91,KD01}.

\Cref{sec:model} introduces the model, the marginal obstacle equation and the occupation estimate. \Cref{sec:regularity} studies the intervention thresholds, from the Lipschitz bound to smoothness on explicit regions. \Cref{sec:strategy} reconstructs the dual value, verifies optimal consumption, wealth and investment, and treats ratcheting and the limit of infinite reduction cost. \Cref{sec:numerics} presents numerical illustrations and \cref{sec:conclusion} concludes. The appendices contain integrability details, the discrete reconstruction used in the computations, and remarks on the regularity methods.

\section{The model and the marginal obstacle problem}\label{sec:model}
\subsection{Market, preferences, and the budget constraint}
Let $W$ be a Brownian motion and let the filtration be the augmented filtration it generates. The money market account, the risky asset and the expected excess return satisfy
\begin{align}
 \frac{dB_t}{B_t}&=rdt, &
 \frac{dS_t}{S_t}&=(r+b_t)dt+\sigma dW_t,\label{eq:market}\\
 db_t&=\kappa(\bar b-b_t)dt-\eta dW_t.
 \label{eq:factor}
\end{align}
\begin{assumption}\label{ass:main}
We have $r,\delta,\sigma,\kappa,\eta>0$, $\kappa\sigma>\eta$, and $\bar b\in\R$. Preferences satisfy $\gamma>1$, $0\leq\alpha<1/\delta$, $0\leq\lambda<\infty$, and $\alpha+\lambda>0$.
\end{assumption}
Under the pricing measure the factor reverts to its mean at speed $\kappa-\eta/\sigma$, which is positive under \cref{ass:main}. Apart from the uniqueness argument in \cref{prop:ratchetPDE}, which uses $\kappa\sigma>\eta$, the PDE arguments below only use $2\kappa\sigma>\eta$. The current value of $b_t$ may have either sign. The investor may take long or short positions and lever the portfolio, and the only solvency requirement is that total wealth stays nonnegative.

For a dollar amount $\pi_t$ invested in the risky asset and a consumption rate $c_t$, wealth follows
\begin{equation}
 dX_t=(rX_t+b_t\pi_t-c_t)dt+\sigma\pi_tdW_t,
 \qquad X_0=x>0.\label{eq:wealth}
\end{equation}
Initial consumption is $c_{0-}=c>0$. Set
\begin{equation}
 u(c)=\frac{c^{1-\gamma}}{1-\gamma},\qquad q_t=u(c_t),
 \qquad q_t=q_{0-}+q_t^+-q_t^-,\label{eq:utility}
\end{equation}
where $q^+$ and $q^-$ are the Jordan variations, including a possible adjustment at time zero. The criterion is
\begin{equation}
 \calU_{\alpha,\lambda}(c_\cdot)
 =\E\left[\int_0^\infty e^{-\delta t}u(c_t)dt
 -\int_{[0,\infty)}e^{-\delta t}
       (\alpha dq_t^++\lambda dq_t^-)\right].\label{eq:objective}
\end{equation}
At an upward jump from $c_-$ to $c_+$ the cost is $\alpha[u(c_+)-u(c_-)]$, which is the marginal cost $\alpha u'$ integrated over the jump, and a downward jump is treated in the same way.

Admissible consumption is adapted, positive, c\`adl\`ag, and locally of bounded variation. We require a finite present value of consumption and finite expected discounted negative utility and adjustment costs. When one of the cost coefficients is zero, the last condition is imposed on $(\alpha+\lambda)dq^-$. The terminal condition
\begin{equation}
 \lim_{T\to\infty}\E[e^{-\delta T}|u(c_T)|]=0\label{eq:utilityTV}
\end{equation}
is part of admissibility, and the integrability estimates in \cref{sec:pricingappendix} show that the constructed optimizer satisfies it. Portfolios are locally square integrable and wealth is nonnegative. We write $V_\lambda(x,c,b)$ for the supremum of \eqref{eq:objective} over these controls.

The state-price density and the uncontrolled shadow price process are
\begin{align}
 H_t&=e^{-rt}M_t, &
 M_t&=\mathcal E\left(-\int_0^t\frac{b_s}{\sigma}dW_s\right),\label{eq:spd}\\
 Y_t&=y e^{\delta t}H_t, &
 dY_t&=(\delta-r)Y_tdt-\frac{b_t}{\sigma}Y_tdW_t.
 \label{eq:shadow}
\end{align}
The exponential $M$ is a martingale on every finite horizon (\cref{sec:pricingappendix}), so that
\begin{equation}
 \E_b[H_t]=e^{-rt},\qquad
 \E_{z,b}\int_T^\infty e^{-\delta t}Z_tdt
       =\frac{z e^{-rT}}r,\label{eq:linear-moment}
\end{equation}
where $Z$ follows the same dynamics as $Y$ and starts from $z$.

\begin{proposition}\label{prop:budget}
A positive adapted consumption stream can be financed from $x$ with nonnegative wealth if and only if
\begin{equation}
 \E\int_0^\infty H_t c_tdt\leq x.\label{eq:budget}
\end{equation}
When equality holds, its minimal financing wealth is
\begin{equation}
 X_t=\frac1{H_t}\E_t\int_t^\infty H_s c_sds.\label{eq:financingwealth}
\end{equation}
\end{proposition}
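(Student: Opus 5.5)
The plan is the standard complete-market martingale argument, split into necessity and sufficiency.

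\emph{Necessity.} Suppose $(\pi,c)$ finances $c$ from $x$ with $X\geq0$. By It\^o's formula applied to $H_tX_t$ with \eqref{eq:wealth} and \eqref{eq:spd}, the drift terms $b_t\pi_t$ and $-\frac{b_t}{\sigma}\cdot\sigma\pi_t$ cancel. This gives
\[
H_tX_t+\int_0^tH_sc_sds=x+\int_0^tH_s\Big(\sigma\pi_s-\frac{b_s}{\sigma}X_s\Big)dW_s .
\]
The left side is nonnegative, so the stochastic integral is a local martingale bounded below by $-x$. It is therefore a supermartingale, and $\E[H_TX_T]+\E\int_0^TH_sc_sds\leq x$. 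Dropping the nonnegative first term and letting $T\to\infty$ by monotone convergence gives \eqref{eq:budget}.

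\emph{Sufficiency.} Assume first that equality holds in \eqref{eq:budget}, and define $X$ by \eqref{eq:financingwealth}. Then $N_t:=\E_t\int_0^\infty H_sc_sds$ is a uniformly integrable martingale with $N_0=x$. Since the filtration is Brownian, martingale representation gives $N_t=x+\int_0^t\psi_sdW_s$ with $\int_0^T\psi_s^2ds<\infty$ a.s. Set
\[
\pi_t=\frac{1}{\sigma}\Big(\frac{\psi_t}{H_t}+\frac{b_t}{\sigma}X_t\Big).
\]
The computation above, read backwards via It\^o on $X_t=(N_t-\int_0^tH_sc_sds)/H_t$, shows that $X$ solves \eqref{eq:wealth} with this $\pi$. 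Also $X\geq0$ by construction.

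Local square integrability of $\pi$ follows pathwise. $H$ and $X$ are continuous and $H>0$, so $1/H_t$ and $X_t$ are locally bounded. $b$ is continuous, and $\psi\in L^2_{\mathrm{loc}}$ a.s. Hence $\int_0^T\pi_t^2dt<\infty$ a.s., which is the local integrability needed for the wealth equation.

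If the inequality in \eqref{eq:budget} is strict, there is slack $x-\E\int H_tc_tdt>0$. One can either keep it in the money account or add it to consumption at the end. The cleanest choice is to finance $c$ with initial capital $x':=\E\int H c\,dt$ as above and invest the surplus $x-x'$ in the money account. Wealth stays linear in the two components, so it remains nonnegative.

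\emph{Minimality.} Any financing wealth $\tilde X\geq0$ for the same $c$ satisfies, by the same supermartingale argument started at time $t$,
\[
H_t\tilde X_t\geq\E_t\int_t^\infty H_sc_sds .
\]
This is exactly $H_tX_t$ from \eqref{eq:financingwealth}, so $X$ is minimal.

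The main obstacle is mild. It is the justification that the local martingale in the necessity step is a supermartingale, and the conditional version of this at time $t$. Both follow from nonnegativity via Fatou's lemma, without any integrability of $\pi$ beyond local square integrability. The fact from the appendix that $M$ is a true martingale is not needed for the budget inequality itself. It only enters when \eqref{eq:linear-moment} is used later.
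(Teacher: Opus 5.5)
Your proof is correct and follows the same route as the paper. Necessity uses It\^o's formula for $H_tX_t$ together with nonnegativity (the supermartingale or localization argument), and sufficiency uses Brownian martingale representation of $\E_t\int_0^\infty H_sc_sds$ with the same portfolio $\pi_t=\sigma^{-1}(\psi_t/H_t+b_tX_t/\sigma)$, holding any surplus in the money market. Your conditional bound $H_t\tilde X_t\geq\E_t\int_t^\infty H_sc_sds$ for an arbitrary nonnegative financing wealth $\tilde X$ makes explicit the minimality claim, which the paper's proof leaves implicit.
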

\begin{proof}
It\^o's formula gives
$d(H_tX_t)=-H_tc_tdt+H_t(\sigma\pi_t-b_tX_t/\sigma)dW_t$, and localization and nonnegativity imply \eqref{eq:budget}. Conversely, represent the uniformly integrable martingale
$\mathcal N_t=\E_t\int_0^\infty H_sc_sds$ as $d\mathcal N_t=\psi_tdW_t$.
Then \eqref{eq:financingwealth}, together with
$\pi_t=\sigma^{-1}(\psi_t/H_t+b_tX_t/\sigma)$, satisfies \eqref{eq:wealth} and is nonnegative. A strict budget surplus can be held in the money market account. Brownian martingale representation and localization give the required local integrability \cite{KS98,Protter}.
\end{proof}

\subsection{Utility units and normalization of the costs}
Write $C(q)=u^{-1}(q)=((1-\gamma)q)^{1/(1-\gamma)}$ for $q<0$. Then
\begin{equation}
 C'(q)=C(q)^\gamma,\qquad C''(q)=\gamma C(q)^{2\gamma-1}>0.
 \label{eq:inverseutility}
\end{equation}
Consequently $q-yC(q)$ is strictly concave in $q$. This is the concavity used in the dual problem, since the original objective need not be concave in consumption rates. The normalization of the costs below and the failure of concavity in consumption rates are also discussed in \cite{CJK25}, which works in the same utility units.

Integration by parts, using \eqref{eq:utilityTV}, gives
\begin{equation}
 \calU_{\alpha,\lambda}
 =\alpha u(c)+(1-\delta\alpha)\E\int_0^\infty e^{-\delta t}q_tdt
 -(\alpha+\lambda)\E\int_{[0,\infty)}e^{-\delta t}dq_t^-.
 \label{eq:normalization}
\end{equation}
With $A_0=1-\delta\alpha>0$ and $\hat\lambda=(\alpha+\lambda)/A_0$ we obtain
\begin{equation}
 \calU_{\alpha,\lambda}=\alpha u(c)+A_0\calU_{0,\hat\lambda},\qquad
 \mathrm{ILA}=1+\delta\hat\lambda=\frac{1+\delta\lambda}{1-\delta\alpha}.
 \label{eq:ila}
\end{equation}
Thus two pairs of costs with the same coefficient of intertemporal loss aversion $\mathrm{ILA}$ of \cite{CJK,CJK25} generate the same primal policy, as in the model with constant investment opportunities. \Cref{prop:reduction} gives the corresponding statement for the marginal equation.

\subsection{The marginal obstacle equation}\label{sec:pde}
For a fixed multiplier $y>0$, define the dual consumption problem
\begin{equation}
 J_\lambda(y,c,b)=\sup_{c_\cdot}\E\left[\int_0^\infty e^{-\delta t}
 (u(c_t)-Y_tc_t)dt-\int_{[0,\infty)}e^{-\delta t}
 (\alpha dq_t^++\lambda dq_t^-)\right].\label{eq:dual}
\end{equation}
No wealth budget is imposed here. A constant consumption stream gives a finite lower bound, and for $\gamma>1$ the objective is nonpositive. We construct the solution from a marginal PDE and establish its relation to \eqref{eq:dual} in \cref{sec:strategy}.

For a consumption level $a>0$, the marginal flow payoff divided by $u'(a)$ is $1-ya^\gamma$. This motivates the state $z=ya^\gamma$ and the uncontrolled diffusion
\begin{equation}
 dZ_t=(\delta-r)Z_tdt-\frac{b_t}{\sigma}Z_tdW_t,
 \qquad db_t=\kappa(\bar b-b_t)dt-\eta dW_t,\label{eq:uncontrolledZ}
\end{equation}
with generator
\begin{equation}
 \A=\frac{b^2z^2}{2\sigma^2}\partial_{zz}
 +\frac{\eta bz}{\sigma}\partial_{zb}+\frac{\eta^2}{2}\partial_{bb}
 +(\delta-r)z\partial_z+\kappa(\bar b-b)\partial_b.\label{eq:generator}
\end{equation}
The mixed term is positive because both coordinates load negatively on $W$. Write
\begin{equation}
 A_0=1-\delta\alpha>0,\quad B_\lambda=1+\delta\lambda,
 \quad K=\frac{A_0}\delta,\quad g(z)=\frac1\delta-\frac zr.\label{eq:constants}
\end{equation}
Our starting point is the double obstacle equation
\begin{equation}
 \min\{\max[\delta v-\A v-1+z,\ v-\alpha],\ v+\lambda\}=0
 \quad\text{on }(0,\infty)\times\R.\label{eq:doubleVI}
\end{equation}
Solutions are understood in the viscosity sense, and the constraint $-\lambda\leq v\leq\alpha$ is part of the equation. The left side of \eqref{eq:doubleVI} is nondecreasing in each of the three expressions $\delta v-\A v-1+z$, $v-\alpha$ and $v+\lambda$, which we use repeatedly to produce sub and supersolutions.

\subsection{Construction by monotone penalization}
\begin{proposition}\label{prop:PDEexist}
Equation \eqref{eq:doubleVI} has a unique bounded continuous solution $v_\lambda$. It is the locally uniform limit of the penalized problems below and satisfies
\begin{equation}
 \alpha-\frac zr\leq v_\lambda(z,b)\leq\alpha,\qquad
 -\lambda\leq v_\lambda(z,b).\label{eq:lowzbound}
\end{equation}
\end{proposition}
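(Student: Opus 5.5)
The plan is to build $v_\lambda$ as the limit of penalized problems, to obtain the bounds \eqref{eq:lowzbound} from explicit sub- and supersolutions, and to get uniqueness from a comparison principle for \eqref{eq:doubleVI}.

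\textbf{Barriers.} We check three explicit functions, using that the left side of \eqref{eq:doubleVI} is nondecreasing in each of its three entries.
\begin{itemize}
\item The constant $\alpha$ is a supersolution. Indeed $v-\alpha=0$ gives $\max[\cdot,0]\geq0$, and $\alpha+\lambda\geq0$ gives $\min\geq0$.
\item The constant $-\lambda$ is a subsolution, since its third entry vanishes.
\item The function $w(z)=\alpha-z/r$ is a subsolution. Since $\A w=-(\delta-r)z/r$, we get $\delta w-\A w-1+z=\delta\alpha-1=-A_0<0$. Also $w-\alpha=-z/r<0$, so the $\max$ is negative and the $\min$ is negative.
\end{itemize}
Once comparison is available, these three functions give $\max\{-\lambda,\alpha-z/r\}\leq v_\lambda\leq\alpha$. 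In particular $v_\lambda$ is bounded.

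\textbf{Penalization.} For $\varepsilon>0$ we solve
\[
\delta v-\A v-1+z+\tfrac1\varepsilon(v-\alpha)^+-\tfrac1\varepsilon(v+\lambda)^-=0 .
\]
We use the mild (Feynman--Kac) formulation along the uncontrolled diffusion \eqref{eq:uncontrolledZ}. We first add $\varepsilon^{-1}v$ to both sides. The map
\[
v\mapsto \E_{z,b}\int_0^\infty e^{-(\delta+1/\varepsilon)t}\bigl[1-Z_t+\tfrac1\varepsilon v-\tfrac1\varepsilon(v-\alpha)^++\tfrac1\varepsilon(v+\lambda)^-\bigr](Z_t,b_t)\,dt
\]
is then a contraction in the weighted sup norm with weight $1+z$. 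This uses \eqref{eq:linear-moment}: we have $\E Z_t=ze^{(\delta-r)t}$, so $Z$ produces discount $e^{-rt}$. It also uses that the nonlinearity $v\mapsto v-(v-\alpha)^++(v+\lambda)^-$ is nondecreasing and $1$-Lipschitz. This gives a unique continuous solution $v^\varepsilon$ with linear growth in $z$.

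The same three barriers, applied to the penalized equation by comparison of mild solutions, give $\alpha-z/r-C\varepsilon\leq v^\varepsilon$ and $-\lambda-C\varepsilon\leq v^\varepsilon\leq\alpha+C\varepsilon$ after small constant shifts. These bounds are uniform in $\varepsilon$. Continuity of $v^\varepsilon$ follows from continuity of the flow in its initial point together with the uniform integrability provided by the linear moment bound.

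\textbf{Passage to the limit.} We take half-relaxed limits $\overline v=\limsup^*v^\varepsilon$ and $\underline v=\liminf_*v^\varepsilon$. By Barles--Perthame stability \cite{CIL,FS06}:
\begin{itemize}
\item $\overline v$ is a subsolution and $\underline v$ is a supersolution of \eqref{eq:doubleVI};
\item the penalty forces $-\lambda\leq\underline v\leq\overline v\leq\alpha$.
\end{itemize}
Comparison then gives $\overline v\leq\underline v$. Hence the limit $v_\lambda$ exists, is continuous, and the convergence is locally uniform. The same comparison gives uniqueness among bounded continuous solutions and transfers the barrier bounds above to $v_\lambda$.

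\textbf{Main obstacle: comparison.} The hardest step is the comparison principle. The diffusion coefficient $(-bz/\sigma,-\eta)$ is degenerate (rank one) and not globally Lipschitz on $(0,\infty)\times\R$, and the domain is unbounded with boundary at $z=0$.

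I would first pass to $x=\log z$. There the diffusion coefficient becomes $(-b/\sigma,-\eta)$, which is Lipschitz. The drift becomes $(\delta-r-b^2/(2\sigma^2),\kappa(\bar b-b))$, which has quadratic growth in $b$ but is one-sided Lipschitz in the $b$-component.

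The Ishii doubling of variables then goes through as follows:
\begin{itemize}
\item Add a penalization $\theta\,\phi(x,b)$ with a Lyapunov function such as $\phi=(1+b^2)^{1/2}+\cosh(x/2)$. It should satisfy $\delta\phi-\A\phi\geq c>0$ outside a compact set; this is where $2\kappa\sigma>\eta$ enters, to control the $b$-drift against the cross term.
\item Use the boundedness of both functions, which follows from the bounds already established, together with $\theta\to0$, to localize maxima.
\end{itemize}
If the global Lyapunov estimate fails, the fallback is to prove comparison on bounded boxes with boundary data controlled by the barriers, and then let the boxes exhaust the domain.
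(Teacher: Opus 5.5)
\textbf{A gap in the comparison step.} Your overall architecture is a legitimate alternative to the paper's: stationary penalization solved by a contraction in the weight $1+z$, explicit barriers, half-relaxed limits, and a comparison principle for the stationary equation. The paper instead penalizes on a finite horizon with zero terminal data and proves comparison only for the parabolic problem. It then obtains the stationary solution and its uniqueness from time homogeneity, via $|V^{T+h}(0,\cdot)-V^T(0,\cdot)|\le Me^{-\delta T}$ and $\|v-\widetilde v\|_\infty\le e^{-\delta T}\|v-\widetilde v\|_\infty$.

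That detour exists because of exactly the step you call the main obstacle, and there your argument fails as written. On $[0,T]$ the barrier $e^{\mu(T-t)}(1+x^2+b^4)$ only needs $\A\Phi\le C\Phi$, since $\mu$ may be taken large. A stationary comparison at the fixed rate $\delta$ instead needs $\phi\to\infty$ with $(\delta-\A)\phi$ bounded below, and your $\phi$ violates this. From $\A e^{mx}=[\frac{b^2}{2\sigma^2}m(m-1)+(\delta-r)m]e^{mx}$ one gets
\[
(\delta-\A)\cosh(x/2)=\tfrac12\Bigl(\tfrac{\delta+r}{2}+\tfrac{b^2}{8\sigma^2}\Bigr)e^{x/2}+\tfrac12\Bigl(\tfrac{3\delta-r}{2}-\tfrac{3b^2}{8\sigma^2}\Bigr)e^{-x/2}.
\]
On $\{x=0\}$ this equals $\delta-b^2/(8\sigma^2)$, while $(\delta-\A)(1+b^2)^{1/2}$ grows only like $(\delta+\kappa)|b|$. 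Hence $(\delta-\A)\phi\to-\infty$ as $|b|\to\infty$. The reason is that $\log Z$ has the unbounded volatility $b/\sigma$, so a convex function with exponential growth in $x$ (here $z^{-1/2}$ near $z=0$) is not a Lyapunov function.

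Your fallback on boxes does not avoid this. The sub- and supersolution are not ordered on the box boundary, and showing that a boundary discrepancy of size $\alpha+\lambda$ disappears in the interior as the boxes grow requires exactly such a barrier. Also, $2\kappa\sigma>\eta$ plays no role in comparison: your $\phi$ is separable, so there is no cross term. The paper uses that condition only for H\"ormander's bracket condition.

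\textbf{How to close it.} Use a polynomial barrier with enough weight on $b$, $\phi=1+x^2+Kb^4$. One has
\[
\A x^2=b^2/\sigma^2+2(\delta-r)x-b^2x/\sigma^2,\qquad \A b^4=6\eta^2b^2+4\kappa\bar b b^3-4\kappa b^4 .
\]
The estimate $b^2|x|/\sigma^2\le\frac\delta2x^2+b^4/(2\delta\sigma^4)$ then gives $(\delta-\A)\phi\ge-C$ once $K(\delta+4\kappa)>1/(2\delta\sigma^4)$. By the monotonicity in the three entries, $u-\theta(\phi+C/\delta)$ is again a subsolution. It tends to $-\infty$ at infinity when $u$ is bounded above, so the doubling argument with the locally Lipschitz drift in $(x,b)$ runs on a compact set depending on $\theta$.

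With this repair your route is more direct than the paper's: there is no time variable, and it produces at once the stationary penalized solutions that the paper constructs later anyway. The paper's route, in exchange, needs only the cruder growth bound.

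\textbf{A smaller correction.} The constant $-\lambda-C\varepsilon$ is not a subsolution of your penalized equation, since its residual $z-1-\delta\lambda-C(1+\delta\varepsilon)$ is positive for large $z$. The correct lower barrier is $-\lambda-\varepsilon z$, as in the paper, which still gives locally uniform bounds. Also, $\alpha-z/r$ is an exact subsolution of the stationary penalized equation, so no shift is needed there.
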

\begin{proof}
We first solve penalized problems and compare them through their mild formulation. For $T>0$ and $\epsilon>0$ consider
\begin{equation}
\begin{gathered}
 V^{\epsilon,T}_t+\A V^{\epsilon,T}-\delta V^{\epsilon,T}
 +1-z-\beta_\epsilon(V^{\epsilon,T})=0,
 \qquad V^{\epsilon,T}(T,\cdot)=0,\\
 \beta_\epsilon(q)
 =\frac{(q-\alpha)^+-(-\lambda-q)^+}{\epsilon}.
\end{gathered}
 \label{eq:penaltyPDE}
\end{equation}
With $\mathcal S_t$ the semigroup of \eqref{eq:uncontrolledZ}, the mild form of \eqref{eq:penaltyPDE} with a terminal value $\varphi$ in place of zero is
\begin{equation}
 V(t,\cdot)
 =e^{-\delta(T-t)}\mathcal S_{T-t}\varphi+\int_t^T e^{-\delta(s-t)}\mathcal S_{s-t}
 \bigl[1-z-\beta_\epsilon(V(s,\cdot))\bigr]ds.
 \label{eq:mild}
\end{equation}
Let $\mathcal C_1$ be the space of continuous functions on $(0,\infty)\times\R$ with finite norm $\|f\|_1=\sup|f|/(1+z)$. The semigroup $\mathcal S_t$ is positive, and $\mathcal S_t1=1$ and $\mathcal S_tz=ze^{(\delta-r)t}$ by \eqref{eq:linear-moment}, so $\|\mathcal S_tf\|_1\leq e^{|\delta-r|t}\|f\|_1$.

Comparison for the penalized problems is carried out on the mild formulation, which applies to functions of linear growth in $z$. Put $K_\epsilon=1/\epsilon$. Since $\beta_\epsilon$ is nondecreasing with Lipschitz constant $1/\epsilon$, the map $q\mapsto K_\epsilon q-\beta_\epsilon(q)$ is nondecreasing and $K_\epsilon$-Lipschitz. For $\varphi\in\mathcal C_1$ and $V:[0,T]\to\mathcal C_1$ define
\begin{equation}
 \mathcal M_\varphi(V)(t)=e^{-(\delta+K_\epsilon)(T-t)}\mathcal S_{T-t}\varphi
 +\int_t^Te^{-(\delta+K_\epsilon)(s-t)}\mathcal S_{s-t}
 \bigl[1-z+K_\epsilon V(s)-\beta_\epsilon(V(s))\bigr]ds.
 \label{eq:shiftedmild}
\end{equation}
For $\varphi\in\mathcal C_1$ and bounded measurable $f:[0,T]\to\mathcal C_1$, let $\mathcal W$ be defined by the first line of
\begin{align}
 \mathcal W(t)&=e^{-\delta(T-t)}\mathcal S_{T-t}\varphi+\int_t^Te^{-\delta(s-t)}\mathcal S_{s-t}f(s)ds\nonumber\\
 &=e^{-(\delta+K_\epsilon)(T-t)}\mathcal S_{T-t}\varphi+\int_t^Te^{-(\delta+K_\epsilon)(s-t)}\mathcal S_{s-t}\bigl[f(s)+K_\epsilon\mathcal W(s)\bigr]ds.
 \label{eq:Kshift}
\end{align}
The second line follows from the semigroup property, Fubini's theorem and the identity
\[
 \int_t^uK_\epsilon e^{-K_\epsilon(s-t)}ds=1-e^{-K_\epsilon(u-t)}.
\]
With $f=1-z-\beta_\epsilon(V)$, this shows that a solution $V$ of \eqref{eq:mild} with terminal value $\varphi$ satisfies $V=\mathcal M_\varphi(V)$. Conversely, if $V=\mathcal M_\varphi(V)$ and $\widetilde V$ denotes the right side of \eqref{eq:mild}, then \eqref{eq:Kshift} shows that $V-\widetilde V$ solves the linear equation $V-\widetilde V=\int_t^Te^{-(\delta+K_\epsilon)(s-t)}\mathcal S_{s-t}[K_\epsilon(V-\widetilde V)(s)]ds$, whose only solution is zero by the contraction estimate below. In the norm $\sup_{t\leq T}e^{-\vartheta(T-t)}\|V(t)\|_1$ on bounded measurable maps $V:[0,T]\to\mathcal C_1$, with $\vartheta>|\delta-r|$, the map $\mathcal M_\varphi$ is a contraction with constant $K_\epsilon/(K_\epsilon+\delta+\vartheta-|\delta-r|)<1$. Hence \eqref{eq:mild} has a unique solution with values in $\mathcal C_1$, which we denote by $V^{\epsilon,T}$ when $\varphi=0$, and the Picard iterates of $\mathcal M_\varphi$ converge to it from any starting point \cite[Chapter~6, Section~1]{Pazy}. The semigroup property and Dynkin's formula show that it is a viscosity solution of \eqref{eq:penaltyPDE} \cite[Chapter~V, Section~3]{FS06}.

The map $\mathcal M_\varphi$ is order preserving, because $\mathcal S$ is positive and $q\mapsto K_\epsilon q-\beta_\epsilon(q)$ is nondecreasing, and it is nondecreasing in $\varphi$. We call $\widetilde V$ a mild supersolution with terminal value $\varphi$ if $\widetilde V\geq\mathcal M_\varphi(\widetilde V)$ with $\mathcal M_\varphi$ as in \eqref{eq:shiftedmild}, and a mild subsolution if the reverse inequality holds. If $\widetilde V$ is a mild supersolution with terminal value $\varphi$ and $V$ solves \eqref{eq:mild} with terminal value $\psi\leq\varphi$, then $\mathcal M_\psi(\widetilde V)\leq\widetilde V$, the iterates $\mathcal M_\psi^n(\widetilde V)$ decrease and converge to $V$, and therefore $V\leq\widetilde V$. Subsolutions are treated in the same way.

For two kinds of functions, mild sub and supersolutions can be recognized directly. If $\widetilde V(t,z,b)=c_0(t)+c_1(t)z$ with continuously differentiable coefficients, then $\mathcal S_t1=1$ and $\mathcal S_tz=ze^{(\delta-r)t}$ give
\[
 \widetilde V(t)=e^{-\delta(T-t)}\mathcal S_{T-t}\widetilde V(T)
 +\int_t^Te^{-\delta(s-t)}\mathcal S_{s-t}\bigl[(-\partial_s-\A+\delta)\widetilde V(s)\bigr]ds,
\]
and by \eqref{eq:Kshift} and positivity, $(-\partial_t-\A+\delta)\widetilde V\geq1-z-\beta_\epsilon(\widetilde V)$ together with $\widetilde V(T)\geq\varphi$ makes $\widetilde V$ a mild supersolution with terminal value $\varphi$. The reverse inequalities give a mild subsolution. For functions of the form $\mathcal W$ in \eqref{eq:Kshift}, the second line of \eqref{eq:Kshift} plays the same role.

Next we prove the uniform bounds
\begin{equation}
 -\lambda-\epsilon z\leq V^{\epsilon,T}\leq\alpha+\epsilon.
 \label{eq:penaltybounds}
\end{equation}
Indeed, $\underline V=-\lambda-\epsilon z$ and $\overline V=\alpha+\epsilon$ are affine in $z$ and satisfy $\beta_\epsilon(\underline V)=-z$ and $\beta_\epsilon(\overline V)=1$. Since $\A z=(\delta-r)z$,
\begin{align*}
 (-\partial_t-\A+\delta)\underline V&=-\delta\lambda-\epsilon rz\leq1=1-z-\beta_\epsilon(\underline V),\\
 (-\partial_t-\A+\delta)\overline V&=\delta(\alpha+\epsilon)\geq-z=1-z-\beta_\epsilon(\overline V),
\end{align*}
and $\underline V\leq0\leq\overline V$ at $t=T$. By the criterion for affine functions above, $\underline V$ and $\overline V$ are a mild subsolution and a mild supersolution with terminal value zero, which gives \eqref{eq:penaltybounds}. Next let
\[
 \mathcal W(t)=\int_t^Te^{-\delta(s-t)}\mathcal S_{s-t}|1-z|ds\geq0.
\]
Since $\beta_\epsilon(0)=0$ and $\beta_\epsilon$ is nondecreasing, $1-z-\beta_\epsilon(\mathcal W)\leq|1-z|$ and $1-z-\beta_\epsilon(-\mathcal W)\geq-|1-z|$, so \eqref{eq:Kshift} gives $\mathcal M_0(\mathcal W)\leq\mathcal W$ and $\mathcal M_0(-\mathcal W)\geq-\mathcal W$. Hence
\[
 |V^{\epsilon,T}(t,\cdot)|
 \leq
 \int_0^{T-t}e^{-\delta s}\mathcal S_s|1-z|ds,
\]
and the right side tends to zero as $t\nearrow T$, locally uniformly in $(z,b)$ and uniformly in $\epsilon$.

We now let $\epsilon\searrow0$ for fixed $T$. By \eqref{eq:penaltybounds} the family $\{V^{\epsilon,T}\}_{0<\epsilon\leq1}$ is locally bounded, so the half-relaxed limits
\begin{align*}
 \overline V^T(t,z,b)
 &=\limsup_{\substack{\epsilon\searrow0\\ (t',z',b')\to(t,z,b)}}
 V^{\epsilon,T}(t',z',b'),\\
 \underline V^T(t,z,b)
 &=\liminf_{\substack{\epsilon\searrow0\\ (t',z',b')\to(t,z,b)}}
 V^{\epsilon,T}(t',z',b')
\end{align*}
are finite, and by the estimate of $|V^{\epsilon,T}|$ above both vanish at $t=T$. By \eqref{eq:penaltybounds} they satisfy $-\lambda\leq\underline V^T\leq\overline V^T\leq\alpha$, so both are bounded. By the standard argument \cite{CIL}, $\overline V^T$ and $\underline V^T$ are a viscosity subsolution and a viscosity supersolution of the finite horizon version of \eqref{eq:doubleVI} with zero terminal data. To compare these bounded functions on the whole state space we use $x=\log z$. The transformed generator satisfies $\A(1+x^2+b^4)\leq C(1+x^2+b^4)$, so for $\mu>C$ the function $e^{\mu(T-t)}(1+x^2+b^4)$ can be used as a barrier, and the comparison argument on compact sets extends to the whole state space \cite[Section~5.D]{CIL}. Hence $\overline V^T\leq\underline V^T$. The reverse inequality holds by definition, so $V^{\epsilon,T}$ converges locally uniformly to a continuous function $V^T$, which is the unique solution of the finite horizon obstacle problem with $V^T(T,\cdot)=0$.

Next we let $T\to\infty$. Since the coefficients and the obstacles do not depend on time, uniqueness for the finite horizon problem gives $V^T(t,\cdot)=V^{T-t}(0,\cdot)$ for $0\leq t\leq T$. In particular $V^{T+h}(T,\cdot)=V^h(0,\cdot)$ takes values in $[-\lambda,\alpha]$ for every $h>0$. Put $M=\max(\alpha,\lambda)$. The function $e^{-\delta(T-t)}$ solves $-\partial_t\varphi+\delta\varphi=0$, so by the monotonicity noted after \eqref{eq:doubleVI} the functions $V^T+Me^{-\delta(T-t)}$ and $V^T-Me^{-\delta(T-t)}$ are a supersolution and a subsolution of the finite horizon problem on $[0,T]$. At $t=T$ they lie above and below $V^{T+h}(T,\cdot)$, and comparison gives
\[
 \bigl|V^{T+h}(0,\cdot)-V^T(0,\cdot)\bigr|\leq Me^{-\delta T}.
\]
Hence $V^T(0,\cdot)$ converges uniformly as $T\to\infty$, and we denote its limit by $v_\lambda$. Since $V^T(t,\cdot)=V^{T-t}(0,\cdot)$, the functions $V^T$ also converge to $v_\lambda$ locally uniformly, and stability of viscosity solutions shows that $v_\lambda$ solves \eqref{eq:doubleVI}.

It remains to prove uniqueness and the lower bound. Let $v$ and $\widetilde v$ be bounded solutions of \eqref{eq:doubleVI} and $M_0=\|v-\widetilde v\|_\infty$. As in the limit $T\to\infty$, $\widetilde v\pm M_0e^{-\delta(T-t)}$ are a supersolution and a subsolution of the finite horizon problem which bound $v$ at $t=T$, and comparison gives $\|v-\widetilde v\|_\infty\leq e^{-\delta T}M_0$ for every $T$. Thus $v=\widetilde v$.

For the lower bound in \eqref{eq:lowzbound}, the function
\[
 w(t,z)=\alpha-\frac zr-\alpha e^{-\delta(T-t)}+\frac zre^{-r(T-t)}
\]
is affine in $z$, vanishes at $t=T$ and satisfies $w\leq\alpha$, so $\beta_\epsilon(w)\leq0$. Moreover $(-\partial_t-\A+\delta)w=1-z-A_0\leq1-z-\beta_\epsilon(w)$. By the criterion for affine functions, $w$ is a mild subsolution with terminal value zero, and $w\leq V^{\epsilon,T}$. Letting $\epsilon\searrow0$ gives $\alpha-z/r\leq V^T(0,z,b)+\alpha e^{-\delta T}$, and letting $T\to\infty$ proves the lower bound. The remaining inequalities in \eqref{eq:lowzbound} are the obstacle constraints.
\end{proof}

\subsection{Derivative representations}
We next represent the derivatives of the penalized solutions through the flow of the uncontrolled diffusion. These representations give the derivative bounds used in the boundary analysis.

On $\mathcal F_T$ define $Q^b$ by $dQ^b/dP=M_T$, and write $dW_s^{Q^b}=dW_s+(b_s/\sigma)ds$. The flow of \eqref{eq:uncontrolledZ} is explicit. Differentiating with respect to the initial factor value gives $\partial_b b_s=e^{-\kappa s}$, and differentiating the stochastic exponential for $Z_s$ gives
\[
 \partial_b\log Z_s
 =-\frac1\sigma\int_0^s e^{-\kappa u}dW_u
 -\frac1{\sigma^2}\int_0^s b_ue^{-\kappa u}du.
\]
After the substitution $dW_u=dW_u^{Q^b}-(b_u/\sigma)du$ the two finite variation terms cancel, and we obtain
\begin{equation}
 \partial_b b_s=e^{-\kappa s},\qquad
 \partial_b\log Z_s=-\frac1\sigma N_s,\qquad
 N_s=\int_0^s e^{-\kappa u}dW_u^{Q^b}.
 \label{eq:flowsensitivity}
\end{equation}

From now on we replace the piecewise linear penalty $\beta^{\rm lin}_\epsilon$ of \eqref{eq:penaltyPDE} by a smooth nondecreasing function $\beta_\epsilon$ with $0\leq\beta_\epsilon'\leq1/\epsilon$, which has the same sign as $\beta^{\rm lin}_\epsilon$ and satisfies $|\beta^{\rm lin}_\epsilon|-1\leq|\beta_\epsilon|\leq|\beta^{\rm lin}_\epsilon|$. The construction above applies without change, since the mild comparison only uses $0\leq\beta_\epsilon'\leq1/\epsilon$, and the affine barriers $-\lambda-\epsilon(1+z)$ and $\alpha+2\epsilon$ replace those in \eqref{eq:penaltybounds}, so that
\begin{equation}
 -\lambda-\epsilon(1+z)\leq V^{\epsilon,T}\leq\alpha+2\epsilon,
 \qquad -(1+z)\leq\beta_\epsilon(V^{\epsilon,T})\leq2.
 \label{eq:smoothpenaltybounds}
\end{equation}

\begin{lemma}\label{lem:linearized}
Let
\[
 k_s=\beta_\epsilon'\bigl(V^{\epsilon,T}(s,Z_s,b_s)\bigr)\geq0,
 \qquad
 R_s=\exp\left(-rs-\int_0^s k_udu\right).
\]
Then
\begin{equation}
 -(V^{\epsilon,T})_z(0,z,b)
 =\E^{Q^b}\int_0^T R_sds,
 \qquad
 (V^{\epsilon,T})_b(0,z,b)
 =\frac z\sigma\E^{Q^b}\int_0^T R_sN_sds.
 \label{eq:linearized}
\end{equation}
In particular $-1/r\leq(V^{\epsilon,T})_z<0$.
\end{lemma}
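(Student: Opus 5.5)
The plan is to obtain both identities from one Feynman--Kac argument for the linearized penalty equation along the uncontrolled flow \eqref{eq:uncontrolledZ}, and then remove the factor $Z_s$ by the change of measure to $Q^b$. Write $V=V^{\epsilon,T}$ and $x=\log z$. In the coordinates $(x,b)$ the coefficients of $\A$ do not depend on $x$, so $\partial_x$ commutes with $\A$. Since $\beta_\epsilon$ is smooth, formally differentiating \eqref{eq:penaltyPDE} in $x$ gives, for $u=zV_z=\partial_xV$,
\[
 u_t+\A u-\delta u-z-\beta_\epsilon'(V)u=0,\qquad u(T,\cdot)=0.
\]
Feynman--Kac with the potential $k_s=\beta_\epsilon'(V(s,Z_s,b_s))$ then gives
\[
 u(0,z,b)=-\E\int_0^Te^{-\delta s-\int_0^sk_r\,dr}Z_s\,ds.
\]
By \eqref{eq:shadow} and \eqref{eq:spd}, $e^{-\delta s}Z_s=ze^{-rs}M_s$. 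Since $k_s$ is adapted, $M_s=\E_s[M_T]$ lets us pass to $Q^b$, which yields $-V_z=\E^{Q^b}\int_0^TR_s\,ds$.

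The bounds follow at once. We have $0<R_s\le e^{-rs}$, so $0<\E^{Q^b}\int_0^TR_s\,ds\le\int_0^Te^{-rs}\,ds<1/r$. This gives $-1/r\le V_z<0$.

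For the $b$-derivative, the coefficients of $\A$ depend on $b$, so I would not differentiate the PDE. Instead I would differentiate pathwise along the flow. The process
\[
 \Gamma_t=e^{-\delta t-\int_0^tk}\,V(t,Z_t,b_t)+\int_0^te^{-\delta s-\int_0^sk}\bigl[1-Z_s-\beta_\epsilon(V)+k_sV\bigr](s,Z_s,b_s)\,ds
\]
is a martingale, by \eqref{eq:penaltyPDE} and It\^o's formula; this is the linearized form of the mild equation \eqref{eq:mild}. I would differentiate in the initial value $b$. The derivative of $V(s,Z_s,b_s)$ with respect to $b$ is
\[
 D_s=V_x(s)\,\partial_b\log Z_s+V_b(s)\,e^{-\kappa s},
\]
using \eqref{eq:flowsensitivity}. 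The terms containing $\partial_bk$ cancel, exactly as in the derivation of the linearized equation. The expected result is that
\[
 e^{-\delta t-\int_0^tk}D_t-\int_0^te^{-\delta s-\int_0^sk}Z_s\,\partial_b\log Z_s\,ds
\]
is again a martingale, with $D_0=V_b(0,z,b)$ and $D_T=0$ because $V(T,\cdot)=0$. Taking expectations at $t=T$ and inserting $\partial_b\log Z_s=-N_s/\sigma$ gives
\[
 V_b(0,z,b)=\frac1\sigma\E\int_0^Te^{-\delta s-\int_0^sk}Z_sN_s\,ds.
\]
The same change of measure as before turns this into $(z/\sigma)\E^{Q^b}\int_0^TR_sN_s\,ds$. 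Here $N_s$ is $Q^b$-Gaussian with variance at most $1/(2\kappa)$, which makes the expectation finite.

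The main obstacle is justifying these differentiations. The diffusion matrix has rank one, so classical Schauder regularity of $V$ is not available. I would therefore argue on the mild formulation instead. The Picard iterates of $\mathcal M_0$ start from $0$, and because $\beta_\epsilon$ is smooth each iterate is differentiable in $(z,b)$; this uses differentiability of the explicit flow \eqref{eq:flowsensitivity} and moment bounds on $Z_s$, $N_s$ under $Q^b$, available since $M$ is a true martingale. The differentiated iterates satisfy the linear fixed-point equations whose solutions are the two representations. I would then show that the derivatives converge uniformly on compacts, using the contraction constant in the weighted norm of the construction of \cref{prop:PDEexist}. This identifies $V_z$ and $V_b$ with the limits and avoids any appeal to PDE regularity.
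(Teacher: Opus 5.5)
Your formulas are correct, and so are the passage to $Q^b$ through $e^{-\delta s}Z_s=ze^{-rs}M_s$ (using that $\int_0^sk_u\,du$ is adapted) and the bounds $-1/r\le(V^{\epsilon,T})_z<0$. Your route differs from the paper's, however.

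\textbf{Your route.} You first derive the representations formally: you differentiate the PDE in $x=\log z$, and you differentiate a weighted martingale along the flow in $b$. You then plan to justify both by differentiating the Picard iterates of $\mathcal M_0$ and passing to the limit.

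\textbf{The paper's route.} The paper never differentiates $V^{\epsilon,T}$ or its iterates. It runs two flows, started at $z$ and $z+h$ (or at $b$ and $b+h$), and uses the martingale that the mild form gives along each. It writes $\beta_\epsilon(U^h_s)-\beta_\epsilon(U_s)=k^h_s(U^h_s-U_s)$ with the chord slope $k^h_s\in[0,1/\epsilon]$, so the difference satisfies a linear equation. The integrating factor $e^{-\int_0^sk^h_u\,du}$ and $U^h_T=U_T=0$ then give the exact identity $(U^h_0-U_0)/h=-\E\int_0^Te^{-\delta s-\int_0^sk^h_u\,du}(Z^h_s-Z_s)/h\,ds$. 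Letting $h\to0$ needs only continuity of $V^{\epsilon,T}$, $\beta_\epsilon\in C^1$ and $L^1$ convergence of the flow quotients, and it proves differentiability and the representation at once. In the $b$ case only $Z^h-Z$ enters, because the source $1-z$ does not depend on $b$. So the path derivative $D_s=V_x\partial_b\log Z_s+V_be^{-\kappa s}$ and any regularity of $V$ along the path are never needed.

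\textbf{What your scheme costs.} It can be completed, but you would need three further steps:
\begin{itemize}
\item show that $\mathcal S_t$ preserves $C^1$ functions whose $x$- and $b$-derivatives grow at most linearly in $z$;
\item identify the solution of the coupled linear fixed-point system for $(\partial_xV,\partial_bV)$ with the two representations, by a flow-composition and Feynman--Kac argument;
\item combine the weighted contraction with a dominated-convergence step, since $\beta_\epsilon'(V^{(n)})\to\beta_\epsilon'(V)$ only locally uniformly.
\end{itemize}

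\textbf{A correction to your $b$ heuristic.} The terms containing $\partial_bk$ do not cancel pathwise. A short computation shows they add up to the stochastic integral $-\int_0^t\bigl(\int_0^s\partial_bk_u\,du\bigr)d\Gamma_s$, so they vanish only in expectation, and only after an integrability check. It is simpler to differentiate the unweighted martingale $e^{-\delta t}V_t+\int_0^te^{-\delta s}(1-Z_s-\beta_\epsilon(V_s))\,ds$ and apply the factor $e^{-\int_0^tk_u\,du}$ afterwards. That is exactly what the chord-slope argument does at the level of difference quotients.
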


\begin{proof}
Let $Z^h$ be the solution of \eqref{eq:uncontrolledZ} started at $z+h$, and put $U_s=V^{\epsilon,T}(s,Z_s,b_s)$ and $U_s^h=V^{\epsilon,T}(s,Z_s^h,b_s)$. By the Markov property and \eqref{eq:mild}, the process $e^{-\delta s}U_s+\int_0^se^{-\delta u}(1-Z_u-\beta_\epsilon(U_u))du$ is a martingale on $[0,T]$, and the same holds for $U^h$. Let
\[
 k_s^h=\frac{\beta_\epsilon(U_s^h)-\beta_\epsilon(U_s)}{U_s^h-U_s},
\]
with $k_s^h=\beta_\epsilon'(U_s)$ when $U_s^h=U_s$, so that $0\leq k_s^h\leq1/\epsilon$. The product rule with the integrating factor $e^{-\int_0^sk_u^hdu}$ and the terminal condition $U_T^h=U_T=0$ give
\[
 \frac{U_0^h-U_0}{h}
 =-\E\int_0^T
 e^{-\delta s-\int_0^s k_u^hdu}
 \frac{Z_s^h-Z_s}{h}ds.
\]
As $h\to0$, $(Z_s^h-Z_s)/h\to\partial_zZ_s=Z_s/z$ in $L^1$ and $k_s^h\to k_s$, which gives
\[
 (V^{\epsilon,T})_z(0,z,b)
 =-\E\int_0^T
 e^{-\delta s-\int_0^s k_udu}\,\frac{Z_s}{z}ds.
\]
For the derivative in $b$ we perturb the initial factor value instead. The explicit solutions for $b_s$ and $Z_s$ give convergence of the difference quotients to $\partial_bZ_s=-Z_sN_s/\sigma$ by \eqref{eq:flowsensitivity}, and the integrability follows from
\[
 \E|\partial_bZ_s|
 =ze^{(\delta-r)s}
 \sqrt{\frac{1-e^{-2\kappa s}}{\pi\kappa\sigma^2}}.
\]
Hence $(V^{\epsilon,T})_b(0,z,b)=\E\int_0^Te^{-\delta s-\int_0^sk_udu}Z_sN_s\sigma^{-1}ds$. Finally $e^{-\delta s}Z_s=ze^{-rs}M_s$, and the change of measure $dQ^b/dP=M_T$ turns both formulas into \eqref{eq:linearized}.
\end{proof}

\subsection{A sharp occupation estimate}
The representation of $(V^{\epsilon,T})_b$ reduces the remaining estimate to a bound on the weighted occupation term involving $N_s$, and we now determine the sharp constant. Let $p_0=1+r/\kappa>1$ and define the Gaussian moment function
\begin{equation}
 \Psi(x)=\int_{\sqrt{2\kappa}x}^\infty
 (u-\sqrt{2\kappa}x)^{p_0}e^{-\frac{u^2}2}du.\label{eq:Psi}
\end{equation}
The equation $d\Psi'(d)=\Psi(d)$ has a unique solution $d_*<0$, as shown in the proof below, and we set
\begin{equation}
 c_{r,\kappa}=-d_*,\qquad L_*=\frac{c_{r,\kappa}}\sigma,
 \qquad B_*=\eta c_{r,\kappa}.\label{eq:sharpconstants}
\end{equation}

\begin{lemma}\label{lem:sharpoccupation}
Let $N_s=\int_0^s e^{-\kappa u}dW_u$, let $k$ be a nonnegative bounded adapted process, and let $R_s=e^{-rs-\int_0^s k_udu}$. Then for every $T>0$
\begin{equation}
 \left|\E\int_0^T R_sN_sds\right|
 \leq c_{r,\kappa}\E\int_0^T R_sds,
 \qquad c_{r,\kappa}\leq\frac1{\sqrt{2r}}.\label{eq:sharpoccupation}
\end{equation}
The constant $c_{r,\kappa}$ is optimal over all finite horizons and all such processes $k$.
\end{lemma}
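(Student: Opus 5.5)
The plan is to read \eqref{eq:sharpoccupation} as the statement that a certain optimal stopping problem has value zero, and then to certify that value with an explicit supersolution. Replacing $N$ by $-N$ leaves the hypotheses unchanged, so it suffices to prove the one-sided bound
\[
 \sup_{k}\E\int_0^T R_s(N_s-c)\,ds\leq0\qquad\text{for }c=c_{r,\kappa}.
\]
A nonnegative killing rate $k$ is a randomized stopping rule: $R_s=e^{-rs}\,P(\tau>s\mid\mathcal F_s)$ for a suitable random time $\tau$. The left side is therefore a mixture of values $\E\int_0^{\tau\wedge T}e^{-rs}(N_s-c)\,ds$, and it suffices to find a function $\varphi(s,n)\geq0$ with $\varphi(0,0)=0$ satisfying
\[
 \varphi_s+\tfrac12e^{-2\kappa s}\varphi_{nn}-r\varphi+n-c\leq0
\]
in a generalized sense (for instance $C^1$ with piecewise $C^2$, so that It\^o--Tanaka applies). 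Given such a $\varphi$, It\^o's formula for $R_s\varphi(s,N_s)$ gives
\[
 d\bigl(R\varphi\bigr)\leq-R(N-c)\,ds-kR\varphi\,ds+dM,
\]
and since $k\varphi\geq0$ we get $\E\int_0^TR(N-c)\,ds\leq\varphi(0,0)-\E R_T\varphi\leq0$. Boundedness of $k$ and the Gaussian moments of $N$ make the local martingale a true martingale.

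\emph{Crude bound.} Take $\varphi$ independent of $s$ and convex. Then $e^{-2\kappa s}\varphi''\leq\varphi''$, so it is enough to solve $\tfrac12\varphi''-r\varphi+n-c\leq0$. Set $\varphi=0$ for $n\leq0$ and
\[
 \varphi(n)=\frac{n-c}{r}+Ae^{-\sqrt{2r}\,n}\qquad\text{for }n>0,
\]
with $C^1$ fit at $n=0$. The fit conditions give $c=1/\sqrt{2r}$ and $A=c/r$. This $\varphi$ is convex, the inequality holds on $n\leq0$ because there $n-c\leq0$, and $\varphi(0)=0$. Hence $c_{r,\kappa}\leq1/\sqrt{2r}$ will follow once the optimal constant is identified with $-d_*$.

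\emph{Sharp constant.} The time dependence of the variance has to be used. I would pass to the coordinate $x=e^{\kappa s}n$, or more precisely the normalized position $n/\sqrt{\operatorname{Var}N_\infty}$ against the remaining Gaussian increment $N_\infty-N_s$. Because the ``continuation forever'' payoff is $\E_{s,n}\int_s^\infty e^{-r(u-s)}(N_u-c)\,du$ and $N_u-N_s$ has variance $\propto e^{-2\kappa s}-e^{-2\kappa u}$, the natural candidate value is a Gaussian moment of order $p_0=1+r/\kappa$. The substitution $e^{-\kappa u}=e^{-\kappa s}v^{1/\kappa}$ turns $e^{-ru}\,du$ into a power weight and produces $\Psi$ from \eqref{eq:Psi}. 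I would take a candidate of the form
\[
 \varphi(s,n)=e^{-\text{(power)}s}\,\Psi\bigl(\text{scaled }n\bigr)
\]
plus a linear correction, together with a stopping boundary that is a straight line in the scaled variable. The condition $d\Psi'(d)=\Psi(d)$ is exactly the smooth-fit (value and derivative) condition at that boundary, and $c=-d_*$ makes the boundary pass through the origin at $s=0$, so that $\varphi(0,0)=0$.

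The remaining checks are these. First, $d\mapsto d\Psi'(d)-\Psi(d)$ has a unique root on $(-\infty,0)$; it is monotone and has opposite signs at $0$ and $-\infty$, which follows from log-concavity of $\Psi$. Second, $\varphi\geq0$ and the differential inequality holds in the stopping region, which is a sign condition on $n-c$ there. Third, a finite horizon $T$ only helps, since truncation amounts to stopping at $T$.

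\emph{Optimality.} Use $k=0$ up to the hitting time of the boundary and $k=\infty$ afterwards, approximated by large bounded $k$ and large $T$. Along this policy the inequality is an equality, so for any $c'<c_{r,\kappa}$ the expectation becomes strictly positive.

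I expect the main obstacle to be the correct self-similar change of variables producing $\Psi$, and verifying the sign of the generator in the continuation region, i.e.\ the monotonicity properties of $\Psi$.
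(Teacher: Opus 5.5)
Your plan is essentially the paper's proof. The convex time-independent barrier you build is exactly the paper's $\Phi_0$. The sharp bound comes from a $C^{1,1}$ supersolution built from $\Psi$ with smooth fit $d_*\Psi'(d_*)=\Psi(d_*)$. Optimality uses $k=K\chi_{\{s\geq\tau\}}$ with $K\to\infty$, where $\tau$ is the first time $e^{\kappa s}(N_s-c')$ reaches $d_*$ for a given $c'<c_{r,\kappa}$.

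The scaled variable you were unsure about is $X_s=e^{\kappa s}(N_s-c)$; without the shift by $c$ the boundary cannot pass through the origin. This $X$ satisfies $dX_s=\kappa X_s\,ds+dW_s$ and turns $R_s(N_s-c)$ into $e^{-(r+\kappa)s-\int_0^sk_u\,du}X_s$. The verification therefore reduces to $\tfrac12\Phi''+\kappa x\Phi'-(r+\kappa)\Phi+x\leq0$ for $\Phi(x)=x/r-d_*\Psi(x)/(r\Psi(d_*))$ on $x>d_*$ and $\Phi=0$ otherwise. This is an equality in the continuation region, so the only sign checks are $x\leq0$ where $\Phi=0$, and $\Phi\geq0$.

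One justification must be replaced. Uniqueness of $d_*$ and $\Phi'>0$ on $(d_*,\infty)$ come from convexity of $\Psi$, not from log-concavity. The latter also gives $\Phi(-c')>0$ in the optimality step. The relevant facts are $(x\Psi'-\Psi)'=x\Psi''$ and $\Psi'(x)>\Psi'(d_*)=\Psi(d_*)/d_*$. Log-concavity only bounds $\Psi''$ from above, so it does not make $x\Psi'-\Psi$ monotone on $(-\infty,0)$. Finally, the positive sign of $x\Psi'-\Psi$ at $-\infty$ needs the asymptotics $\sqrt{2\pi}(p_0-1)(\sqrt{2\kappa}|x|)^{p_0}$.
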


\begin{proof}
Differentiation under the integral shows $\Psi>0$, $\Psi'<0$, $\Psi''>0$ and
\[
 \tfrac12\Psi''+\kappa x\Psi'-(r+\kappa)\Psi=0.
\]
The function $H(x)=x\Psi'(x)-\Psi(x)$ satisfies $H'(x)=x\Psi''(x)<0$ for $x<0$, $H(0)=-\Psi(0)<0$, and
$H(x)\sim\sqrt{2\pi}\,(p_0-1)(\sqrt{2\kappa}|x|)^{p_0}>0$ as $x\to-\infty$. Hence $H$ has exactly one zero $d_*$ on $(-\infty,0)$, which is the unique negative solution of $d\Psi'(d)=\Psi(d)$. Define
\begin{equation}
 \Phi(x)=\begin{cases}
 0,&x\leq d_*,\\
 \displaystyle\frac xr-\frac{d_*\Psi(x)}{r\Psi(d_*)},&x>d_*.
 \end{cases}\label{eq:Phi}
\end{equation}
The choice of $d_*$ gives $\Phi(d_*)=\Phi'(d_*)=0$, so $\Phi\in C^{1,1}_{\rm loc}$. Convexity of $\Psi$ gives $\Phi'>0$ on $(d_*,\infty)$, hence $\Phi\geq0$, and
\begin{equation}
 \tfrac12\Phi''+\kappa x\Phi'-(r+\kappa)\Phi+x\leq0,
 \label{eq:Phiineq}
\end{equation}
with equality for $x>d_*$.

Put $X_s=e^{\kappa s}(N_s-c_{r,\kappa})$, so that $X_0=d_*$ and $dX_s=\kappa X_sds+dW_s$. It\^o's formula for $e^{-(r+\kappa)s-\int_0^s k_udu}\Phi(X_s)$, together with \eqref{eq:Phiineq}, $k\Phi\geq0$ and $\Phi(X_0)=0$, gives
\[
 \E\int_0^T R_s(N_s-c_{r,\kappa})ds
 \leq
 -\E\left[
 e^{-(r+\kappa)T-\int_0^T k_udu}\Phi(X_T)
 \right]
 \leq0.
\]
The same argument applied to $-N$ gives the opposite inequality, which proves the first part of \eqref{eq:sharpoccupation}.

For the bound $c_{r,\kappa}\leq(2r)^{-1/2}$, put $c_0=(2r)^{-1/2}$ and let $\Phi_0(n)=r^{-1}(n-c_0+c_0e^{-\sqrt{2r}n})$ for $n>0$ and $\Phi_0(n)=0$ for $n\leq0$. This function is nonnegative, convex and of class $C^{1,1}$, and $\frac12a^2\Phi_0''-r\Phi_0+n-c_0\leq0$ for $0\leq a^2\leq1$. Since $dN_s=e^{-\kappa s}dW_s$ and $e^{-2\kappa s}\leq1$, It\^o's formula for $R_s\Phi_0(N_s)$, together with $k\geq0$ and $\Phi_0\geq0$, gives $\E\int_0^T R_s(N_s-c_0)ds\leq-\E[R_T\Phi_0(N_T)]\leq0$. With the same bound for $-N$, \eqref{eq:sharpoccupation} holds with $c_0$ in place of $c_{r,\kappa}$, and the optimality proved next gives $c_{r,\kappa}\leq c_0$.

To prove optimality, let $c<c_{r,\kappa}$, put $X_s=e^{\kappa s}(N_s-c)$, so that $X_0=-c>d_*$, and let $\tau$ be the first time $X$ reaches $d_*$. Since \eqref{eq:Phiineq} is an equality on $(d_*,\infty)$, It\^o's formula up to $T\wedge\tau$ gives
\[
 \E\int_0^{T\wedge\tau}e^{-rs}(N_s-c)ds
 =\Phi(-c)-\E\left[e^{-(r+\kappa)(T\wedge\tau)}\Phi(X_{T\wedge\tau})\right].
\]
The last expectation tends to zero as $T\to\infty$, because $\Phi$ grows at most linearly, $\Phi(d_*)=0$ and $r>0$. Since $\Phi(-c)>0$, there is $T$ with $\E\int_0^{T\wedge\tau}e^{-rs}(N_s-c)ds>0$. For $K>0$ the process $k^K_s=K\chi_{\{s\geq\tau\}}$ is nonnegative, bounded and adapted, and the corresponding weight is $R_s^K=e^{-rs-K(s-\tau)^+}$, which converges to $e^{-rs}\chi_{\{s\leq\tau\}}$ as $K\to\infty$. By dominated convergence,
\[
 \E\int_0^T R_s^K(N_s-c)ds
 \longrightarrow
 \E\int_0^{T\wedge\tau}e^{-rs}(N_s-c)ds>0,
\]
so $\E\int_0^T R_s^KN_sds>c\,\E\int_0^T R_s^Kds$ for large $K$, and \eqref{eq:sharpoccupation} fails with $c$ in place of $c_{r,\kappa}$.
\end{proof}

\begin{theorem}\label{thm:cone}
The solution $v_\lambda$ of \eqref{eq:doubleVI} is locally Lipschitz on $(0,\infty)\times\R$, and its weak derivatives satisfy
\begin{equation}
 -\frac1r\leq(v_\lambda)_z\leq0,
 \qquad |(v_\lambda)_b|\leq L_*z[-(v_\lambda)_z]
 \quad\text{almost everywhere}.\label{eq:cone}
\end{equation}
For all $h\in\R$,
\begin{equation}
 v_\lambda(e^{L_*|h|}z,b+h)\leq v_\lambda(z,b)
 \leq v_\lambda(e^{-L_*|h|}z,b+h).\label{eq:pointcone}
\end{equation}
The same bounds hold for the ratcheting solution of \cref{sec:ratchet}.
\end{theorem}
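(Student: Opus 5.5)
The plan is to prove the bounds at the level of the penalized solutions $V^{\epsilon,T}$, where \cref{lem:linearized} gives classical derivative representations. We then pass to the limit $\epsilon\searrow0$, $T\to\infty$ using the locally uniform convergence from \cref{prop:PDEexist}.

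\textbf{Step 1: the $z$-derivative.} \Cref{lem:linearized} already gives $-1/r\leq(V^{\epsilon,T})_z<0$ at $t=0$. By time homogeneity, $V^{\epsilon,T}(t,\cdot)=V^{\epsilon,T-t}(0,\cdot)$, so the same bound holds at every time.

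\textbf{Step 2: the $b$-derivative.} Under $Q^b$, $N$ is $\int_0^s e^{-\kappa u}dW^{Q^b}_u$ with $W^{Q^b}$ a Brownian motion, $k_s=\beta_\epsilon'(\cdot)$ is nonnegative, adapted and bounded by $1/\epsilon$, and $R_s$ has exactly the form of \cref{lem:sharpoccupation}. Applying that lemma under $Q^b$ gives
\[
 |(V^{\epsilon,T})_b(0,z,b)|
 =\frac z\sigma\Bigl|\E^{Q^b}\int_0^TR_sN_sds\Bigr|
 \leq\frac{c_{r,\kappa}}\sigma z\,\E^{Q^b}\int_0^TR_sds
 =L_*z\bigl[-(V^{\epsilon,T})_z(0,z,b)\bigr].
\]

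\textbf{Step 3: the pointwise cone inequality.} For fixed $\epsilon,T$, integrate along the curve $\theta\mapsto(e^{-L_*\theta}z,b+\theta)$, $\theta\in[0,h]$ with $h>0$. Along it,
\[
 \frac{d}{d\theta}V=-L_*e^{-L_*\theta}zV_z+V_b\geq L_*z'(-V_z)-|V_b|\geq0,
\]
where $z'=e^{-L_*\theta}z$ is the running first coordinate. This gives $V(z,b)\leq V(e^{-L_*h}z,b+h)$. The curve $\theta\mapsto(e^{L_*\theta}z,b+\theta)$ similarly gives $V(e^{L_*h}z,b+h)\leq V(z,b)$, and $h<0$ is handled the same way. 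Since these are pointwise inequalities, they pass to the locally uniform limits $V^T(0,\cdot)$ and then $v_\lambda$, which yields \eqref{eq:pointcone}.

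The ordering of the two limits is the main place needing care: $V^{\epsilon,T}(0,\cdot)$ converges to $V^T(0,\cdot)$ and then to $v_\lambda$. Since the inequalities are closed under pointwise limits, no uniformity in $\epsilon$ is needed beyond the convergence already established. A further subtlety is that \cref{lem:linearized} is stated for the smooth penalty, but \cref{prop:PDEexist} was built with the piecewise linear one. The text notes that the construction applies unchanged to the smooth penalty, and uniqueness of the limit identifies the two limits.

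\textbf{Step 4: Lipschitz regularity and the a.e.\ bounds.} Monotonicity in $z$ together with $V_z\geq-1/r$ gives $|v_\lambda(z_1,b)-v_\lambda(z_2,b)|\leq|z_1-z_2|/r$. Combined with \eqref{eq:pointcone}, this gives
\[
 |v_\lambda(z,b+h)-v_\lambda(z,b)|\leq\frac{z(e^{L_*|h|}-1)}{r},
\]
so $v_\lambda$ is locally Lipschitz. By Rademacher's theorem, at differentiability points the difference quotients of the $\epsilon$-level inequalities (now holding for $v_\lambda$) give $-1/r\leq(v_\lambda)_z\leq0$. Differentiating \eqref{eq:pointcone} in $h$ at $h=0^\pm$ gives $\pm(v_\lambda)_b\leq L_*z[-(v_\lambda)_z]$, which is \eqref{eq:cone}.

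\textbf{Ratcheting case.} The same argument applies to the ratcheting solution, since its penalization has $\beta_\epsilon$ with only the upper part and the derivative representation is identical, provided \cref{sec:ratchet} constructs it by the same penalized limit.
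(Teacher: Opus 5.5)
Your proposal is correct and follows essentially the paper's argument. You combine \cref{lem:linearized} with \cref{lem:sharpoccupation} under $Q^b$ to get the cone bound for the penalized solutions, integrate along the lines that are straight segments in $x=\log z$, pass to the limits in $\epsilon$ and $T$, and treat the ratcheting case by the upper penalty alone. The only difference is minor: you derive local Lipschitz continuity and the a.e.\ bounds \eqref{eq:cone} for $v_\lambda$ from \eqref{eq:pointcone} via Rademacher's theorem, whereas the paper passes the uniform penalized derivative bounds directly to the weak derivatives.
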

\begin{proof}
By \cref{lem:linearized} and \cref{lem:sharpoccupation},
\[
 -\frac1r\leq (V^{\epsilon,T})_z\leq0,
 \qquad
 |(V^{\epsilon,T})_b|
 \leq L_*z[-(V^{\epsilon,T})_z],
\]
uniformly in $\epsilon$ and $T$. Hence the functions $V^{\epsilon,T}(0,\cdot)$ are locally Lipschitz uniformly in $\epsilon$ and $T$, their limit $v_\lambda$ is locally Lipschitz, and the derivative inequalities pass to the weak derivatives, which gives \eqref{eq:cone}.

For \eqref{eq:pointcone}, put $u^{\epsilon,T}(x,b)=V^{\epsilon,T}(0,e^x,b)$, so that $|u^{\epsilon,T}_b|\leq L_*(-u^{\epsilon,T}_x)$. Along the segment from $(x,b)$ to $(x+L_*|h|,b+h)$ the derivative of $u^{\epsilon,T}$ equals $L_*|h|u^{\epsilon,T}_x+hu^{\epsilon,T}_b\leq0$, and along the segment from $(x,b)$ to $(x-L_*|h|,b+h)$ it equals $-L_*|h|u^{\epsilon,T}_x+hu^{\epsilon,T}_b\geq0$. Therefore
\[
 u^{\epsilon,T}(x+L_*|h|,b+h)\leq u^{\epsilon,T}(x,b)\leq u^{\epsilon,T}(x-L_*|h|,b+h),
\]
and letting $\epsilon\searrow0$ and $T\to\infty$ gives \eqref{eq:pointcone}. For the ratcheting problem the same argument applies with the upper penalty alone.
\end{proof}

A bound on $|v_b|$ alone would not control an intervention threshold, because $v_z$ tends to zero at contact. The Lipschitz continuity of the thresholds needs a bound on the ratio of the two derivatives. Estimate \eqref{eq:cone} gives such a bound, and it is for this purpose that the occupation weight $R_s$ is kept in \cref{lem:sharpoccupation}.

\section{Intervention thresholds and their regularity}\label{sec:regularity}

\subsection{Lipschitz thresholds}\label{sec:boundaries}
\begin{theorem}\label{thm:boundaries}
There are positive finite functions $a_\lambda,d_\lambda$ such that
\begin{align}
 \{v_\lambda=\alpha\}&=\{z\leq a_\lambda(b)\},
 &\{v_\lambda=-\lambda\}&=\{z\geq d_\lambda(b)\},\label{eq:contacts}\\
 \C_\lambda&=\{-\lambda<v_\lambda<\alpha\}
 =\{a_\lambda(b)<z<d_\lambda(b)\}.\label{eq:continuation}
\end{align}
They satisfy
\begin{equation}
 0<a_\lambda(b)\leq A_0<B_\lambda\leq d_\lambda(b)<\infty,
 \label{eq:boundarybounds}
\end{equation}
and, for $f=a_\lambda,d_\lambda$,
\begin{equation}
 |\log f(b_2)-\log f(b_1)|
 \leq L_*|b_2-b_1|.\label{eq:logLip}
\end{equation}
In particular the boundary graphs are locally Lipschitz, locally rectifiable, and of Lebesgue measure zero in the plane, and on compact factor intervals $a_\lambda$ is bounded away from zero and $d_\lambda$ is bounded above.
\end{theorem}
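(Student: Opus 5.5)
The plan is to read off the structure of the contact sets from the monotonicity and cone estimates of \cref{thm:cone}, to get the bounds \eqref{eq:boundarybounds} from the viscosity inequalities at contact points, and to get \eqref{eq:logLip} directly from \eqref{eq:pointcone}.

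\emph{Step 1: shape of the contact sets.} Since $(v_\lambda)_z\leq0$, the function $z\mapsto v_\lambda(z,b)$ is nonincreasing for each $b$. Hence $\{z:v_\lambda(z,b)=\alpha\}$ is an initial interval and $\{z:v_\lambda(z,b)=-\lambda\}$ is a final interval. Continuity makes both closed, so I define
\[
 a_\lambda(b)=\sup\{z:v_\lambda(z,b)=\alpha\},\qquad d_\lambda(b)=\inf\{z:v_\lambda(z,b)=-\lambda\},
\]
which gives \eqref{eq:contacts} and \eqref{eq:continuation}, provided both values are positive and finite.

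\emph{Step 2: the upper bound on $a_\lambda$ and the lower bound on $d_\lambda$.} On $\{v_\lambda=\alpha\}$ the function $v_\lambda$ attains its maximum, so constants are test functions from above there. The equation then forces $\delta\alpha-1+z\leq0$, that is $z\leq A_0$. In the same way, minimum points of the value $-\lambda$ force $-\delta\lambda-1+z\geq0$, that is $z\geq B_\lambda$. This gives $a_\lambda\leq A_0<B_\lambda\leq d_\lambda$.

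\emph{Step 3: the Lipschitz bound \eqref{eq:logLip}.} This follows from \eqref{eq:pointcone}, assuming positivity and finiteness. If $z\leq a_\lambda(b)$, then $\alpha=v_\lambda(z,b)\leq v_\lambda(e^{-L_*|h|}z,b+h)\leq\alpha$, so $e^{-L_*|h|}a_\lambda(b)\leq a_\lambda(b+h)$. Exchanging the roles of $b$ and $b+h$ gives the reverse inequality. The argument for $d_\lambda$ is symmetric and uses the first inequality in \eqref{eq:pointcone}. In the variable $\log z$ the graphs are Lipschitz, hence locally Lipschitz, rectifiable and of planar measure zero in the original variables. Local boundedness of $\log a_\lambda$ and $\log d_\lambda$ on compact intervals then follows from the Lipschitz bound.

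\emph{Step 4 (main obstacle): nonemptiness of both contact sets.} By Step 3, $\log a_\lambda$ and $\log d_\lambda$ are either finite everywhere or infinite everywhere. So it suffices to show $a_\lambda(b_0)>0$ and $d_\lambda(b_0)<\infty$ for a single $b_0$, for instance $b_0=0$.

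For $a_\lambda$, I would argue by contradiction. Suppose $v_\lambda<\alpha$ on the whole line $\{b=b_0\}$. By the cone property \eqref{eq:pointcone}, $v_\lambda<\alpha$ then holds on all of $(0,\infty)\times\R$, so $v_\lambda$ satisfies $\delta v-\A v\geq1-z$ in the viscosity sense everywhere. I would compare with an explicit subsolution of the form $w=\alpha-\theta z^{p}$ in the small-$z$ region, with $p$ chosen slightly above $1$, or with a stopping-time (Dynkin game) argument. The point is that for $z<A_0/2$ the running reward $1-z$ exceeds $\delta\alpha$ by a fixed margin. Together with $v_\lambda\to\alpha$ as $z\to0$ (from \eqref{eq:lowzbound}), this should force $v_\lambda>\alpha$ somewhere, which is a contradiction.

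For $d_\lambda$ I would proceed symmetrically. I would build a supersolution of the type $\max\{-\lambda,\ \alpha-(z-z_1)^+/r'\}$, or use the game representation in which the minimizer stops at once. Here I would use that $1-z+\delta\lambda<0$ for $z>B_\lambda$, so that waiting is strictly costly there.

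The difficulty in Step 4 is the $b$-dependence of the generator. I would handle it by restricting to a bounded window of $b$ around $b_0$ and using the degenerate but bounded coefficients there, or by working with the one-variable structure along the diffusion direction. Finiteness of the barriers and the choice of their exponents is the step I expect to require the most care.
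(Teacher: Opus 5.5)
Your Steps 1--3 match the paper's proof: monotonicity in $z$, the constant test functions at contact points giving $a_\lambda\le A_0<B_\lambda\le d_\lambda$, and \eqref{eq:logLip} obtained from \eqref{eq:pointcone}. Your reduction of Step 4 to a single $b_0$ through \eqref{eq:pointcone} is also valid. So is getting the local bounds from \eqref{eq:logLip} rather than from a finite covering.

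There is, however, a genuine gap in Step 4. It is the only part of the theorem that does not follow directly from \cref{thm:cone}, and none of the tools you propose for it works as stated.
\begin{itemize}
\item The function $\alpha-\theta z^p$ is $\le\alpha$ everywhere, and strictly below $\alpha$ for $z>0$. Comparison with it can therefore never force $v_\lambda>\alpha$, nor produce a contact point.
\item The function $\max\{-\lambda,\alpha-(z-z_1)^+/r'\}$ has a convex corner where it reaches $-\lambda$, and there it is not a viscosity supersolution. At that point the supersolution test requires $\delta(-\lambda)-\A\varphi-1+z\ge0$. But a test function touching from below there may have arbitrarily large $\varphi_{zz}$, so $\A\varphi\to+\infty$ at every such point with $b\neq0$. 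Smoothing the corner does not help without localizing in $b$, because $\frac{b^2z^2}{2\sigma^2}\varphi_{zz}$ is unbounded in $b$.
\item The Dynkin game \eqref{eq:game} is only established later, in \cref{sec:potentials}, after the boundary analysis. In any case, immediate stopping by either player only gives the trivial bounds $-\lambda\le v_\lambda\le\alpha$.
\end{itemize}

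What is missing is a barrier that comes arbitrarily close to the obstacle at an interior point while staying on the correct side of $v_\lambda$ on the boundary of a bounded region. On that boundary only \eqref{eq:lowzbound} and $v_\lambda\le\alpha$ are available.

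For the upper threshold, the paper works on $(\zeta,\varepsilon)\times I$ with
\[
w_\zeta=\alpha-\tfrac zr\psi_\varepsilon-\tfrac\zeta r .
\]
The cut-off $\psi_\varepsilon$ depends on $z$ only through $z/\varepsilon$, equals one near $z=\varepsilon$ and near the lateral sides, and vanishes near $(\varepsilon/4,b_0)$. Because $\A$ commutes with dilations in $z$ and $b$ stays in $I$, one has $|(\A-\delta)[z\psi_\varepsilon]|\le C_I\varepsilon$. This is beaten by the margin $A_0$, so $w_\zeta$ is a subsolution of \eqref{eq:doubleVI} that lies below $v_\lambda$ on the boundary. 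Comparison and then $\zeta\searrow0$ give $v_\lambda=\alpha$ near $(\varepsilon/4,b_0)$ directly, without any contradiction argument.

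For the lower threshold, take $\overline w=-\lambda+(\alpha+\lambda)\psi_R$ on $(R/2,2R)\times I$, with $\psi_R$ depending on $z/R$. It satisfies $(\A-\delta)\overline w+1-z\le B_\lambda+C_I(\alpha+\lambda)-R/2<0$, so it is a supersolution, and comparison gives $v_\lambda=-\lambda$ near $(R,b_0)$.

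Your heuristic is the right one: a fixed margin ($A_0$, respectively of order $z$) against an $O(1)$ effect of a cut-off spread over a region of unit size in $\log z$. But without this scale-invariant, $b$-localized construction, the positivity of $a_\lambda$ and the finiteness of $d_\lambda$ remain unproved.
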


\begin{proof}
Write $v=v_\lambda$. The penalized solutions are nonincreasing in $z$ by \eqref{eq:linearized}, and this property passes to the limit, so $z\mapsto v(z,b)$ is nonincreasing for every $b$.

\emph{Necessary conditions for contact.} If $v(z,b)=\alpha$, the constant $\alpha$ touches $v$ from above at $(z,b)$, and the subsolution inequality for \eqref{eq:doubleVI} gives $\delta\alpha-1+z\leq0$, that is, $z\leq A_0$. Similarly, if $v(z,b)=-\lambda$, the constant $-\lambda$ touches $v$ from below, and the supersolution inequality gives $z-1-\delta\lambda\geq0$, that is, $z\geq B_\lambda$. Since $B_\lambda-A_0=\delta(\alpha+\lambda)>0$, the two contact sets are separated.

\emph{Both contact sets meet every vertical line.} Fix $b_0\in\R$ and a bounded interval $I$ containing $b_0$ in its interior. For $\varepsilon>0$ let $\psi_\varepsilon$ be a smooth function on $(0,\varepsilon)\times I$ which depends on $z$ through $z/\varepsilon$, takes values in $[0,1]$, equals one near $z=\varepsilon$ and near the lateral sides of $I$, and vanishes in a neighborhood of $(\varepsilon/4,b_0)$. For $0<\zeta<\varepsilon/4$ put
\[
 w_\zeta(z,b)=\alpha-\frac zr\psi_\varepsilon(z,b)-\frac{\zeta}{r}.
\]
By \eqref{eq:lowzbound}, $w_\zeta\leq\alpha-\zeta/r\leq v$ on $\{z=\zeta\}$, and $w_\zeta\leq\alpha-z/r\leq v$ on $\{z=\varepsilon\}$ and on the lateral boundary. The scaling of $\psi_\varepsilon$ gives $|(\A-\delta)[z\psi_\varepsilon]|\leq C_I\varepsilon$ on $(0,\varepsilon)\times I$, hence $(\A-\delta)w_\zeta+1-z\geq A_0-C_I'\varepsilon>0$ for small $\varepsilon$. Together with $w_\zeta\leq\alpha$, this shows that $w_\zeta$ is a subsolution of \eqref{eq:doubleVI} in $(\zeta,\varepsilon)\times I$, and comparison gives $w_\zeta\leq v$ there. Where $\psi_\varepsilon=0$ we obtain $v\geq\alpha-\zeta/r$, and letting $\zeta\searrow0$ shows that $v=\alpha$ in a neighborhood of $(\varepsilon/4,b_0)$.

The lower contact set is treated in the same way for large $z$. For $R>0$ let $\psi_R$ be a smooth function on $(R/2,2R)\times I$ which depends on $z$ through $z/R$, takes values in $[0,1]$, equals one near the boundary of this rectangle and vanishes near $(R,b_0)$, and put $\overline w=-\lambda+(\alpha+\lambda)\psi_R$. Then $\overline w=\alpha\geq v$ on the boundary of the rectangle and
\[
 (\A-\delta)\overline w+1-z
 \leq B_\lambda+C_I(\alpha+\lambda)-\frac R2<0
\]
for large $R$. Since $\overline w\geq-\lambda$, the function $\overline w$ is a supersolution, comparison gives $v\leq\overline w$, and $v=-\lambda$ where $\psi_R=0$.

\emph{The thresholds.} We may therefore define
\[
 a_\lambda(b)=\sup\{z>0:v(z,b)=\alpha\},\qquad
 d_\lambda(b)=\inf\{z>0:v(z,b)=-\lambda\}.
\]
The contact conditions give $a_\lambda\leq A_0$ and $d_\lambda\geq B_\lambda$, and the two barriers give $a_\lambda>0$ and $d_\lambda<\infty$. Since $v(\cdot,b)$ is continuous and nonincreasing, \eqref{eq:contacts} holds, and together with $A_0<B_\lambda$ this gives \eqref{eq:continuation}. The barriers can be used for all $b$ in a neighborhood of $b_0$, and a finite covering shows that $a_\lambda$ is bounded away from zero and $d_\lambda$ is bounded above on compact intervals.

\emph{The Lipschitz bound.} Fix $b_1,b_2$ and put $\Delta=L_*|b_2-b_1|$. Since $v(a_\lambda(b_1),b_1)=\alpha$, the second inequality in \eqref{eq:pointcone} gives $v(e^{-\Delta}a_\lambda(b_1),b_2)\geq\alpha$, so $e^{-\Delta}a_\lambda(b_1)\leq a_\lambda(b_2)$. Interchanging $b_1$ and $b_2$ yields $e^{-\Delta}a_\lambda(b_1)\leq a_\lambda(b_2)\leq e^{\Delta}a_\lambda(b_1)$, which is \eqref{eq:logLip} for $a_\lambda$. In the same way, $v(d_\lambda(b_1),b_1)=-\lambda$ and the first inequality in \eqref{eq:pointcone} give $v(e^{\Delta}d_\lambda(b_1),b_2)\leq-\lambda$, so $d_\lambda(b_2)\leq e^\Delta d_\lambda(b_1)$, and interchanging $b_1$ and $b_2$ proves \eqref{eq:logLip} for $d_\lambda$. The remaining assertions follow from \eqref{eq:logLip} and the local bounds.
\end{proof}

\subsection{Interior regularity and comparison in the costs}
In $x=\log z$, the diffusion vector field and the Stratonovich drift are
\[
 V_1=\Bigl(-\frac b\sigma,-\eta\Bigr),\qquad
 V_0=\Bigl(\delta-r-\frac{b^2}{2\sigma^2}-\frac\eta{2\sigma},\ \kappa(\bar b-b)\Bigr).
\]
With $[U,V]=DV\,U-DU\,V$,
\begin{equation}
 [V_1,[V_0,V_1]]=
 \left(-\frac{\eta(2\kappa\sigma-\eta)}{\sigma^2},0\right),
 \qquad
 \det(V_1,[V_1,[V_0,V_1]])
 =-\frac{\eta^2(2\kappa\sigma-\eta)}{\sigma^2}.
 \label{eq:bracket}
\end{equation}
Since $2\kappa\sigma>\eta$, the determinant does not vanish, and H\"ormander's condition holds at every point. On a compact subset of $\C_\lambda$ the stationary penalized solutions of the proof of \cref{thm:intrinsic} converge uniformly to $v_\lambda$, so for small $\epsilon$ they take values in $(-\lambda,\alpha)$ and their penalty vanishes. Passing to the limit in the distributional form of the penalized equation, $(\delta-\A)v_\lambda=1-z$ holds in distributions in $\C_\lambda$, and H\"ormander's theorem \cite{Hormander} gives $v_\lambda\in C^\infty(\C_\lambda)$.

Differentiating the equation in $z$ gives
\begin{equation}
 (\A_1-r)(v_\lambda)_z=1,\qquad
 \A_1=\A+\frac{b^2z}{\sigma^2}\partial_z+
              \frac{\eta b}{\sigma}\partial_b.\label{eq:derivativePDE}
\end{equation}
Let $w=(v_\lambda)_z$, so that $w\leq0$ by \eqref{eq:cone}. If $w(z_0,b_0)=0$ at a point of $\C_\lambda$, then $w$ has a local maximum there, its first derivatives vanish and the second order part of $\A_1w$ is nonpositive, so $(\A_1-r)w(z_0,b_0)\leq0$, contradicting \eqref{eq:derivativePDE}. Therefore
\begin{equation}
 -\frac1r\leq(v_\lambda)_z<0
 \quad\text{in }\C_\lambda.\label{eq:strictderiv}
\end{equation}
By \eqref{eq:contacts}, \eqref{eq:continuation} and \eqref{eq:strictderiv}, for every $\ell\in(-\lambda,\alpha)$ and $b\in\R$ the equation $v_\lambda(z,b)=\ell$ has exactly one solution $z$, and the implicit function theorem shows that the level set $\{v_\lambda=\ell\}$ is a smooth graph over $b$.

\begin{proposition}\label{prop:costs}
For $\lambda_2\geq\lambda_1$,
\begin{equation}
 -(\lambda_2-\lambda_1)\leq v_{\lambda_2}-v_{\lambda_1}\leq0,
 \quad a_{\lambda_2}\leq a_{\lambda_1},\quad
 d_{\lambda_2}\geq d_{\lambda_1}.\label{eq:costmonotone}
\end{equation}
The marginal equation and its boundaries do not depend on $\gamma$.
\end{proposition}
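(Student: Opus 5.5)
The plan is to prove both inequalities in \eqref{eq:costmonotone} by comparison for \eqref{eq:doubleVI}. We use the monotonicity of the left side of \eqref{eq:doubleVI} in its three arguments, together with the uniqueness of bounded solutions from \cref{prop:PDEexist}. The cleanest route is at the penalized level and then passing to the limit, since comparison there was already established through the mild formulation. Write $\beta^{(i)}_\epsilon$ for the penalty with lower obstacle $-\lambda_i$.

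\emph{Upper bound $v_{\lambda_2}\leq v_{\lambda_1}$.} For every $q$ we have $\beta^{(2)}_\epsilon(q)\geq\beta^{(1)}_\epsilon(q)$, because $-(-\lambda_2-q)^+\geq-(-\lambda_1-q)^+$. Hence $V^{\epsilon,T}_{\lambda_1}$ satisfies the mild fixed point relation with $\beta^{(1)}_\epsilon$. Since $\mathcal M_\varphi$ is order preserving in the source, $V^{\epsilon,T}_{\lambda_1}$ is then a mild supersolution for the $\lambda_2$ problem. Comparison gives $V^{\epsilon,T}_{\lambda_2}\leq V^{\epsilon,T}_{\lambda_1}$, and letting $\epsilon\searrow0$ and then $T\to\infty$ gives $v_{\lambda_2}\leq v_{\lambda_1}$.

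\emph{Lower bound $v_{\lambda_1}-(\lambda_2-\lambda_1)\leq v_{\lambda_2}$.} Put $\theta=\lambda_2-\lambda_1$ and $w=v_{\lambda_1}-\theta$, and check that $w$ is a viscosity subsolution of the $\lambda_2$ equation. The lower obstacle term gives $w+\lambda_2=v_{\lambda_1}+\lambda_1$. The upper obstacle term gives $w-\alpha\leq v_{\lambda_1}-\alpha$. The PDE term gives $\delta w-\A w-1+z=(\delta v_{\lambda_1}-\A v_{\lambda_1}-1+z)-\delta\theta$, which is at most the corresponding expression for $v_{\lambda_1}$ on test functions. By the monotonicity noted after \eqref{eq:doubleVI}, the $\lambda_2$ operator evaluated at $w$ is therefore at most the $\lambda_1$ operator at $v_{\lambda_1}$, and $w$ is a subsolution. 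We then need comparison for bounded sub- and supersolutions of the stationary equation. I would obtain it exactly as in the uniqueness part of \cref{prop:PDEexist}, by running finite horizon comparison with the correction $\pm Me^{-\delta(T-t)}$ and letting $T\to\infty$. Alternatively, the same argument can be made at the penalized level by shifting the obstacle pair, using that $\beta^{(2)}_\epsilon(q-\theta)\leq\beta^{(1)}_\epsilon(q)+\delta\theta$ up to an error that vanishes as $\epsilon\to0$. The delicate point is that the upper penalty does not shift, so $\beta^{(2)}_\epsilon(q-\theta)\leq\beta^{(1)}_\epsilon(q)$ holds termwise, and this already suffices.

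\emph{Threshold monotonicity.} This follows from \eqref{eq:contacts}. If $z\leq a_{\lambda_2}(b)$, then $\alpha=v_{\lambda_2}(z,b)\leq v_{\lambda_1}(z,b)\leq\alpha$, so $z\leq a_{\lambda_1}(b)$. If $z\geq d_{\lambda_1}(b)$, then $-\lambda_2\leq v_{\lambda_2}(z,b)\leq v_{\lambda_1}(z,b)=-\lambda_1$, but this alone does not force contact for $\lambda_2$. Instead I argue from the other side: take $z\geq d_{\lambda_2}(b)$, so $v_{\lambda_2}=-\lambda_2$; the lower bound gives $v_{\lambda_1}\leq-\lambda_1$, hence $v_{\lambda_1}=-\lambda_1$ and $z\geq d_{\lambda_1}(b)$. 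Thus $d_{\lambda_2}\geq d_{\lambda_1}$.

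\emph{Independence of $\gamma$.} This is immediate, since $\A$, the obstacles $\alpha,-\lambda$ and the source $1-z$ in \eqref{eq:doubleVI} contain no $\gamma$. Uniqueness in \cref{prop:PDEexist} then shows that $v_\lambda$, and hence $a_\lambda$ and $d_\lambda$, are independent of $\gamma$.

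The main obstacle is the second step: making comparison of a bounded viscosity subsolution with the solution rigorous on the unbounded state space. I expect to reuse the barrier $e^{\mu(T-t)}(1+x^2+b^4)$ from \cref{prop:PDEexist} for this.
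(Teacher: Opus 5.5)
Your proposal is correct and follows the paper's proof. The paper uses the same monotonicity of \eqref{eq:doubleVI} in $\lambda$, the same shifted subsolution $v_{\lambda_1}-(\lambda_2-\lambda_1)$ for the $\lambda_2$ problem, and the same inclusions of contact sets to order the thresholds. The only difference is that you get $v_{\lambda_2}\leq v_{\lambda_1}$, and optionally the lower bound, by mild comparison of the penalized problems via $\beta^{(2)}_\epsilon\geq\beta^{(1)}_\epsilon$ and $\beta^{(2)}_\epsilon(q-\theta)\leq\beta^{(1)}_\epsilon(q)$, whereas the paper applies viscosity comparison to the stationary equation; both are valid. Your aside about ``$+\delta\theta$ up to an error'' is unnecessary: the constant shift contributes $-\delta\theta$ and a terminal value $-\theta\leq0$, both of favorable sign.
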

\begin{proof}
Since $v_{\lambda_2}+\lambda_1\leq v_{\lambda_2}+\lambda_2$, the monotonicity of \eqref{eq:doubleVI} shows that $v_{\lambda_2}$ is a subsolution of the problem with cost $\lambda_1$, and comparison gives $v_{\lambda_2}\leq v_{\lambda_1}$. Put $\Delta=\lambda_2-\lambda_1$ and $w=v_{\lambda_1}-\Delta$. Then $w+\lambda_2=v_{\lambda_1}+\lambda_1$, while $\delta w-\A w-1+z$ and $w-\alpha$ are the corresponding expressions for $v_{\lambda_1}$ decreased by $\delta\Delta$ and $\Delta$. Hence $w$ is a subsolution of the problem with cost $\lambda_2$, and $v_{\lambda_1}-\Delta\leq v_{\lambda_2}$. This proves the first two inequalities in \eqref{eq:costmonotone}.

If $v_{\lambda_2}(z,b)=\alpha$, then $\alpha=v_{\lambda_2}(z,b)\leq v_{\lambda_1}(z,b)\leq\alpha$, so the upper contact set for $\lambda_2$ is contained in that for $\lambda_1$, and $a_{\lambda_2}\leq a_{\lambda_1}$. The lower estimate gives $v_{\lambda_2}+\lambda_2\geq v_{\lambda_1}+\lambda_1\geq0$, so $v_{\lambda_2}(z,b)=-\lambda_2$ implies $v_{\lambda_1}(z,b)=-\lambda_1$, and $d_{\lambda_2}\geq d_{\lambda_1}$. Finally, $\gamma$ does not appear in \eqref{eq:doubleVI}.
\end{proof}

\begin{proposition}\label{prop:reduction}
Write $v_{\alpha,\lambda}$ for the solution of \eqref{eq:doubleVI} with costs $(\alpha,\lambda)$, and $a_{\alpha,\lambda}$ and $d_{\alpha,\lambda}$ for its thresholds in \cref{thm:boundaries}. With $\hat\lambda=(\alpha+\lambda)/A_0$,
\begin{equation}
 v_{\alpha,\lambda}(z,b)
 =\alpha+A_0\,v_{0,\hat\lambda}\Bigl(\frac z{A_0},b\Bigr),\qquad
 a_{\alpha,\lambda}=A_0\,a_{0,\hat\lambda},\qquad
 d_{\alpha,\lambda}=A_0\,d_{0,\hat\lambda}.
 \label{eq:reduction}
\end{equation}
In particular
$d_{\alpha,\lambda}/a_{\alpha,\lambda}\geq B_\lambda/A_0=\mathrm{ILA}$, so the ratio of the two thresholds depends on the costs only through the ILA coefficient, and their common scale is $A_0$.
\end{proposition}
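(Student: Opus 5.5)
The plan is to verify directly that the rescaled function
\[
 w(z,b)=\alpha+A_0\,v_{0,\hat\lambda}\Bigl(\frac z{A_0},b\Bigr)
\]
is a bounded continuous viscosity solution of \eqref{eq:doubleVI} with costs $(\alpha,\lambda)$. The uniqueness part of \cref{prop:PDEexist} then identifies $w$ with $v_{\alpha,\lambda}$. Note that $\hat\lambda\geq0$ and the costs $(0,\hat\lambda)$ satisfy \cref{ass:main} whenever $\alpha+\lambda>0$, so $v_{0,\hat\lambda}$ exists by \cref{prop:PDEexist}.

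The key observation is that $\A$ contains only the derivatives $z\partial_z$, $z^2\partial_{zz}$, $z\partial_{zb}$, $\partial_b$ and $\partial_{bb}$, so it commutes with the dilation $z\mapsto z/A_0$ and annihilates constants. Writing $\tilde z=z/A_0$ and $\tilde v=v_{0,\hat\lambda}$, test functions transform in the same way: $\varphi$ touches $w$ at $(z,b)$ if and only if $\tilde\varphi(\tilde z,b)=(\varphi(A_0\tilde z,b)-\alpha)/A_0$ touches $\tilde v$ at $(\tilde z,b)$. The three expressions then transform as follows:
\begin{align*}
 \delta w-\A w-1+z&=A_0\bigl(\delta\tilde v-\A\tilde v-1+\tilde z\bigr)+\delta\alpha+A_0-1=A_0\bigl(\delta\tilde v-\A\tilde v-1+\tilde z\bigr),\\
 w-\alpha&=A_0(\tilde v-0),\\
 w+\lambda&=A_0\bigl(\tilde v+\hat\lambda\bigr).
\end{align*}
The first line uses $A_0=1-\delta\alpha$, and the last uses $\alpha+\lambda=A_0\hat\lambda$. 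Since $A_0>0$, the operator $\min\{\max[\cdot,\cdot],\cdot\}$ commutes with multiplication by $A_0$. Hence the sub- and supersolution inequalities for $w$ at $(z,b)$ are exactly those for $\tilde v$ at $(\tilde z,b)$ with costs $(0,\hat\lambda)$. Boundedness and continuity of $w$ are clear, so uniqueness gives the first identity in \eqref{eq:reduction}.

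For the thresholds, the identity gives $\{v_{\alpha,\lambda}=\alpha\}=\{\tilde v(z/A_0,b)=0\}=\{z/A_0\leq a_{0,\hat\lambda}(b)\}$ by \eqref{eq:contacts}, so $a_{\alpha,\lambda}=A_0a_{0,\hat\lambda}$. Similarly, $\{v_{\alpha,\lambda}=-\lambda\}=\{\tilde v=-\hat\lambda\}$, which yields $d_{\alpha,\lambda}=A_0d_{0,\hat\lambda}$.

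For the ratio, apply \eqref{eq:boundarybounds} to costs $(0,\hat\lambda)$. Their $A_0$ equals $1$ and their $B$ equals $1+\delta\hat\lambda=\mathrm{ILA}$ by \eqref{eq:ila}. This gives $a_{0,\hat\lambda}\leq1$ and $d_{0,\hat\lambda}\geq\mathrm{ILA}$, so
\[
 \frac{d_{\alpha,\lambda}}{a_{\alpha,\lambda}}=\frac{d_{0,\hat\lambda}}{a_{0,\hat\lambda}}\geq\mathrm{ILA}=\frac{B_\lambda}{A_0},
\]
and the ratio depends on the costs only through $\hat\lambda$, equivalently through $\mathrm{ILA}$.

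The only step that needs care is the viscosity bookkeeping. One has to check that the affine change of the unknown and the dilation of $z$ map test functions touching from above (respectively below) to test functions touching from above (respectively below). This holds because $A_0>0$, and the dilation of $z$ is a diffeomorphism of $(0,\infty)$. Everything else is algebra.
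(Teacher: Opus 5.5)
Your proposal is correct and follows the paper's proof essentially step for step. You show that the three expressions in \eqref{eq:doubleVI} for $w=\alpha+A_0v_{0,\hat\lambda}(z/A_0,b)$ equal $A_0$ times those for $v_{0,\hat\lambda}$, identify $w$ with $v_{\alpha,\lambda}$ by the uniqueness in \cref{prop:PDEexist}, and then read off the thresholds and the ILA bound from \eqref{eq:contacts} and \eqref{eq:boundarybounds} for the costs $(0,\hat\lambda)$. Your explicit check that test functions transform correctly under the positive affine change of unknown and the dilation is a detail the paper leaves implicit.
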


\begin{proof}
Put $\zeta=z/A_0$ and $w(z,b)=\alpha+A_0v_{0,\hat\lambda}(\zeta,b)$. The operator $\A$ commutes with the dilation $z\mapsto z/A_0$, and $\delta\alpha-1=-A_0$, so
\[
 \delta w-\A w-1+z
 =A_0\bigl[\delta v_{0,\hat\lambda}-\A v_{0,\hat\lambda}-1+\zeta\bigr](\zeta,b),
\]
while $w-\alpha=A_0v_{0,\hat\lambda}(\zeta,b)$ and $w+\lambda=A_0(v_{0,\hat\lambda}(\zeta,b)+\hat\lambda)$. Thus the three expressions in \eqref{eq:doubleVI} for $w$ with costs $(\alpha,\lambda)$ are $A_0$ times those for $v_{0,\hat\lambda}$ with costs $(0,\hat\lambda)$, and uniqueness in \cref{prop:PDEexist} gives $w=v_{\alpha,\lambda}$. The identities for the thresholds follow from \eqref{eq:contacts}. Finally, \eqref{eq:boundarybounds} for the costs $(0,\hat\lambda)$ gives $a_{0,\hat\lambda}\leq1$ and $d_{0,\hat\lambda}\geq1+\delta\hat\lambda=B_\lambda/A_0$, so $d_{\alpha,\lambda}/a_{\alpha,\lambda}=d_{0,\hat\lambda}/a_{0,\hat\lambda}\geq B_\lambda/A_0$.
\end{proof}

\subsection{Straightening the diffusion}
We now rewrite the obstacle problem in coordinates adapted to the diffusion direction. Define
\begin{equation}
 s=\log z-\frac{b^2}{2\sigma\eta},\qquad
 U_\lambda(s,b)=v_\lambda\Bigl(e^{s+\frac{b^2}{2\sigma\eta}},b\Bigr).
 \label{eq:straighten}
\end{equation}
Since $d\log Z_t=(\delta-r-b_t^2/(2\sigma^2))dt-(b_t/\sigma)dW_t$, It\^o's formula gives
\[
 ds_t
 =d\log Z_t-\frac{b_t}{\sigma\eta}db_t-\frac{\eta}{2\sigma}dt
 =G(b_t)dt,
\]
so the new coordinate has no martingale part. Here
\begin{align}
 G(b)&=\delta-r-\frac{\eta}{2\sigma}
 +\frac{2\kappa\sigma-\eta}{2\sigma^2\eta}b^2
 -\frac{\kappa\bar b}{\sigma\eta}b,\label{eq:G}\\
 \mathcal K&=\frac{\eta^2}{2}\partial_{bb}
 +\mu(b)\partial_b+G(b)\partial_s,
 \qquad\mu(b)=\kappa(\bar b-b),\label{eq:K}
\end{align}
and $\mathcal K$ is the generator in the coordinates $(s,b)$.

\begin{theorem}\label{thm:intrinsic}
For every compact rectangle $E$ in the coordinates $(s,b)$ there is a constant $C_E$, independent of $\lambda$, such that
\begin{equation}
 \|U_\lambda\|_{W^{1,\infty}(E)}+
 \|(U_\lambda)_{bb}\|_{L^\infty(E)}\leq C_E.\label{eq:intrinsic}
\end{equation}
The derivative $(U_\lambda)_b$ has a continuous version satisfying
\begin{equation}
 |(U_\lambda)_b(s,b)-(U_\lambda)_b(s',b')|
 \leq C_E\bigl(|b-b'|+|s-s'|^{\frac12}\bigr),\label{eq:Ubholder}
\end{equation}
and $(U_\lambda)_b=0$ at every contact point. Equivalently, the derivative of $v_\lambda$ in the diffusion direction,
\begin{equation}
 \mathcal Dv_\lambda=\frac b\sigma z(v_\lambda)_z+\eta(v_\lambda)_b
 =\eta(U_\lambda)_b,\label{eq:diffsmoothfit}
\end{equation}
is continuous and vanishes on both free boundaries, and $\mathcal D^2v_\lambda=\eta^2(U_\lambda)_{bb}\in L^\infty_{\rm loc}$. If $(s_0,b_0)$ is an upper contact point and $\phi=\alpha-U_\lambda$, or a lower contact point and $\phi=U_\lambda+\lambda$, then
\begin{equation}
 0\leq \phi(s,b)\leq C_E\bigl(|s-s_0|+|b-b_0|^2\bigr)
 \label{eq:anisogrowth}
\end{equation}
in a neighborhood of $(s_0,b_0)$. The same conclusions hold for the ratcheting problem.
\end{theorem}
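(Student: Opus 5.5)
The plan is to obtain every bound at the level of the penalized problems, uniformly in $\epsilon$, $T$ and $\lambda$, and then pass to the limit. For this I use the stationary penalized solutions $V^\epsilon=\lim_{T\to\infty}V^{\epsilon,T}(0,\cdot)$. This limit exists for fixed $\epsilon$ by the same $e^{-\delta T}$ comparison used in \cref{prop:PDEexist}, now on the mild formulation. Each $V^\epsilon$ solves $\delta V^\epsilon-\A V^\epsilon-1+z+\beta_\epsilon(V^\epsilon)=0$. It inherits from \cref{lem:linearized}, \cref{lem:sharpoccupation} and \eqref{eq:smoothpenaltybounds} the bounds
\[
 -\tfrac1r\leq V^\epsilon_z\leq0,\qquad |V^\epsilon_b|\leq L_*z(-V^\epsilon_z),\qquad -(1+z)\leq\beta_\epsilon(V^\epsilon)\leq2,
\]
and none of these constants depends on $\lambda$. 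Put $U^\epsilon(s,b)=V^\epsilon(e^{s+b^2/(2\sigma\eta)},b)$. The chain rule gives $U^\epsilon_s=zV^\epsilon_z$ and $U^\epsilon_b=\frac{b}{\sigma\eta}zV^\epsilon_z+V^\epsilon_b$. Since $z$ is bounded on the image of $E$, this yields a $W^{1,\infty}(E)$ bound independent of $\epsilon$ and $\lambda$.

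\textbf{Bound on $U_{bb}$.} In the coordinates $(s,b)$ the penalized equation reads
\[
 \tfrac{\eta^2}{2}U^\epsilon_{bb}=\delta U^\epsilon-\mu(b)U^\epsilon_b-G(b)U^\epsilon_s-1+z+\beta_\epsilon(U^\epsilon),
\]
understood in the sense of distributions. The penalized solution is locally Lipschitz, and the first order terms are $L^\infty$ functions, so the equation makes sense distributionally. The right side is bounded on $E$ uniformly in $\epsilon$ and $\lambda$ by the bounds above, including the bound on the penalty, which is why the uniform bound on $\beta_\epsilon$ is essential. Hence $U^\epsilon_{bb}$ is an $L^\infty(E)$ function with a uniform bound.

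I expect this step to be the main obstacle. One has to justify that the distributional identity holds for the mild/viscosity solution of this degenerate equation, and that $\partial_{bb}$ is the only second order term after straightening. The second point is exactly the computation $ds_t=G(b_t)dt$. For the first point I would test the mild formulation against smooth functions via the adjoint semigroup, or use the hypoellipticity of \eqref{eq:bracket} on the smooth penalized problem. Letting $\epsilon\to0$, with locally uniform convergence and weak-$*$ compactness in $W^{1,\infty}$ and for $U_{bb}$, gives \eqref{eq:intrinsic}. The identity \eqref{eq:diffsmoothfit} is then the chain rule: $\mathcal D$ is $-1$ times the generating field $V_1$, which becomes $\eta\partial_b$ in the coordinates $(s,b)$, and $\mathcal D^2v_\lambda=\eta^2(U_\lambda)_{bb}$ follows.

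\textbf{Hölder continuity.} In $b$, Lipschitz continuity of $U_b$ follows from the bound on $U_{bb}$. In $s$ I use an interpolation argument with step $h=|s-s'|^{1/2}$. By Taylor expansion in $b$ with $|U_{bb}|\leq C$,
\[
 U_b(s,b)=\frac{U(s,b+h)-U(s,b)}h+O(h),
\]
and the same holds at $s'$. The two difference quotients differ by at most $2C|s-s'|/h$, because $U$ is Lipschitz in $s$. This gives a total error of $O(|s-s'|^{1/2})$, which is \eqref{eq:Ubholder}, and it also produces the continuous version of $(U_\lambda)_b$.

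\textbf{Smooth fit and growth.} At a contact point $(s_0,b_0)$ we have $\phi\geq0$, with $\phi(s_0,b_0)=0$, by the obstacle constraints. Therefore $b\mapsto\phi(s_0,b)$ is a $C^1$ function attaining its minimum at $b_0$, so $(U_\lambda)_b(s_0,b_0)=0$. Then
\[
 \phi(s,b)\leq\phi(s_0,b)+C|s-s_0|\leq\tfrac12\|U_{bb}\|_\infty|b-b_0|^2+C|s-s_0|,
\]
which is \eqref{eq:anisogrowth}. For the ratcheting problem the same argument applies with only the upper penalty. The bounds of \cref{thm:cone} hold there as well, and $\beta_\epsilon\leq2$ from the upper barrier, so every constant again stays bounded on $E$.
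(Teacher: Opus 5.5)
Your proposal is correct and follows essentially the paper's route: stationary penalized solutions, first-derivative bounds from the chain rule together with \cref{lem:linearized,lem:sharpoccupation}, the bound on $U^\epsilon_{bb}$ read off from the straightened equation, the interpolation with $|h|=|s-s'|^{1/2}$, and smooth fit from the extremum of $b\mapsto U_\lambda(s_0,b)$ at a contact point. Three small points need tightening, and the paper handles each of them. First, the $T\to\infty$ limit at fixed $\epsilon$ needs the semigroup bound $e^{-\delta T}\mathcal S_T|\varphi_h|\leq C_0(e^{-\delta T}+ze^{-rT})$; a constant $e^{-\delta T}$ shift fails because the penalized solutions grow linearly in $z$. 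Second, the $\lambda$-independent bound on $U^\epsilon$ itself (which enters through $\delta U^\epsilon$) comes from the subsolution bound $V^\epsilon\geq\alpha-z/r$, not from $-\lambda-\epsilon(1+z)$. Third, before you use the continuous version of $(U_\lambda)_b$ at a contact point, you must show it is the classical $b$-derivative at every $s$; this follows from $U_\lambda(s,b+h)-U_\lambda(s,b)=\int_b^{b+h}(U_\lambda)_b(s,q)\,dq$ and continuity.
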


\begin{proof}
Fix $0<\epsilon\leq1$. We first let $T\to\infty$ in $V^{\epsilon,T}$, working in the space $\mathcal C_1$ of the proof of \cref{prop:PDEexist}, since the penalized solutions grow linearly in $z$. By uniqueness of mild solutions and time homogeneity, $V^{\epsilon,T+h}$ restricted to $[0,T]$ is the mild solution with terminal value $\varphi_h=V^{\epsilon,h}(0,\cdot)$, and by \eqref{eq:smoothpenaltybounds} and the lower bound at the end of that proof, $-z/r\leq\varphi_h\leq\alpha+2$, so $|\varphi_h|\leq C_0(1+z)$ with $C_0=\alpha+2+1/r$, which also covers the ratcheting problem. Put $\mathcal W(t)=e^{-\delta(T-t)}\mathcal S_{T-t}|\varphi_h|$. Since $\beta_\epsilon$ is nondecreasing and $\mathcal W\geq0$, \eqref{eq:Kshift} shows that $V^{\epsilon,T}+\mathcal W$ is a mild supersolution with terminal value $|\varphi_h|$ and $V^{\epsilon,T}-\mathcal W$ is a mild subsolution with terminal value $-|\varphi_h|$. The mild comparison in that proof gives
\[
 \bigl|V^{\epsilon,T+h}(0,\cdot)-V^{\epsilon,T}(0,\cdot)\bigr|
 \leq e^{-\delta T}\mathcal S_T|\varphi_h|
 \leq C_0\bigl(e^{-\delta T}+ze^{-rT}\bigr).
\]
Hence $V^{\epsilon,T}(0,\cdot)$ is a Cauchy family in $\mathcal C_1$ as $T\to\infty$, and its limit $v^\epsilon$ is a stationary penalized solution, in the mild sense and, by stability, in the viscosity sense. The bounds \eqref{eq:smoothpenaltybounds} pass to the limit. The subsolution $w$ at the end of the same proof gives $V^{\epsilon,T}(0,\cdot)\geq\alpha-z/r-\alpha e^{-\delta T}+ze^{-rT}/r$, so $\alpha-z/r\leq v^\epsilon\leq\alpha+2\epsilon$. Hence $v^\epsilon$ and $\beta_\epsilon(v^\epsilon)$ are locally bounded independently of $\lambda$ and $\epsilon$.

Put $U^\epsilon(s,b)=v^\epsilon(e^{s+b^2/(2\sigma\eta)},b)$. In distributions, the stationary penalized equation becomes
\begin{equation}
 \frac{\eta^2}{2}U^\epsilon_{bb}
 =\delta U^\epsilon-\mu U^\epsilon_b-GU^\epsilon_s
   +z-1+\beta_\epsilon(U^\epsilon),\qquad
 z=e^{s+\frac{b^2}{2\sigma\eta}}.\label{eq:Ubbpenalty}
\end{equation}
Letting $T\to\infty$ in the bounds of \cref{lem:linearized,lem:sharpoccupation} gives $-1/r\leq(v^\epsilon)_z\leq0$ and $|(v^\epsilon)_b|\leq L_*z[-(v^\epsilon)_z]$. Since
\[
 U^\epsilon_s=z(v^\epsilon)_z,\qquad
 U^\epsilon_b=(v^\epsilon)_b+\frac{b}{\sigma\eta}z(v^\epsilon)_z,
\]
we obtain $|U^\epsilon_s|\leq z/r$ and $|U^\epsilon_b|\leq (z/r)(L_*+|b|/(\sigma\eta))$, and \eqref{eq:Ubbpenalty} then bounds $U^\epsilon_{bb}$. Thus \eqref{eq:intrinsic} holds for $U^\epsilon$ with $C_E$ independent of $\lambda$ and $\epsilon$. These bounds give local compactness as $\epsilon\searrow0$. Every locally uniform limit solves \eqref{eq:doubleVI}, so uniqueness identifies it with $v_\lambda$, and weak-$*$ convergence of the first derivatives and of $U^\epsilon_{bb}$ gives \eqref{eq:intrinsic}.

For \eqref{eq:Ubholder}, choose a slightly larger rectangle on which \eqref{eq:intrinsic} holds. For almost every $s$, the function $b\mapsto(U_\lambda)_b(s,b)$ is Lipschitz, so
\[
 \left|(U_\lambda)_b(s,b)-\frac{U_\lambda(s,b+h)-U_\lambda(s,b)}{h}\right|
 \leq C_E|h|,
\]
while the bound on $(U_\lambda)_s$ shows that the difference quotients at $s$ and $s'$ differ by at most $2C_E|s-s'|/|h|$. Taking $|h|=|s-s'|^{1/2}$ gives $|(U_\lambda)_b(s,b)-(U_\lambda)_b(s',b)|\leq C_E|s-s'|^{1/2}$ for almost every $s,s'$. With the Lipschitz bound in $b$, this proves \eqref{eq:Ubholder} on a set of full measure and defines a continuous extension of $(U_\lambda)_b$. The identity $U_\lambda(s,b+h)-U_\lambda(s,b)=\int_b^{b+h}(U_\lambda)_b(s,q)dq$ holds for almost every $s$ and then for every $s$ by continuity, so the extension is the classical derivative in $b$.

At an upper contact point $(s_0,b_0)$ the function $b\mapsto U_\lambda(s_0,b)$ attains its maximum $\alpha$ at $b_0$, so $(U_\lambda)_b(s_0,b_0)=0$, and at a lower contact point it attains its minimum $-\lambda$. The change of variables \eqref{eq:straighten} turns $\mathcal D$ into $\eta\partial_b$, which gives \eqref{eq:diffsmoothfit} and the bound on $\mathcal D^2v_\lambda$. Finally, at a contact point $\phi(s_0,b_0)=\phi_b(s_0,b_0)=0$, and the Lipschitz continuity of $\phi_b$ in $b$ gives $0\leq\phi(s_0,b)\leq C_E|b-b_0|^2$. The bound on $U_s$ gives $|\phi(s,b)-\phi(s_0,b)|\leq C_E|s-s_0|$, and \eqref{eq:anisogrowth} follows. For the ratcheting problem the same argument applies to the penalized problem without the lower obstacle.
\end{proof}

The bound on $(U_\lambda)_{bb}$ controls only the second derivative along the diffusion. The derivatives $v_{zz}$, $v_{zb}$ and $v_{bb}$ in the original coordinates need not be locally bounded, and the difference matters at contact points where the diffusion is tangent to the boundary (\cref{sec:characteristic}).

\subsection{Smooth free boundaries}\label{sec:smooth}
Put
\begin{equation}
 \mathcal T=\{b\in\R: |b|>B_*,\ G(b)\ne0\},
 \qquad B_*=\eta c_{r,\kappa}.\label{eq:smoothset}
\end{equation}
Since $c_{r,\kappa}\leq(2r)^{-1/2}$, the set $\mathcal T$ contains $\{|b|>\eta/\sqrt{2r},\ G(b)\neq0\}$.

\begin{theorem}\label{thm:smoothtails}
Let $0\leq\lambda<\infty$. Then
\begin{equation}
 a_\lambda,d_\lambda\in C^\infty(\mathcal T),\qquad
 a_\lambda(b)<A_0,\quad d_\lambda(b)>B_\lambda
 \quad(b\in\mathcal T).\label{eq:tailconclusions}
\end{equation}
The ratcheting threshold $a_\infty$ also belongs to $C^\infty(\mathcal T)$ and satisfies $a_\infty<A_0$ on $\mathcal T$. Moreover, $v_\lambda$ is smooth up to each free boundary over $\mathcal T$ from the continuation region, $(v_\lambda)_z=(v_\lambda)_b=0$ there, and the obstacle gap grows quadratically with the distance to the free boundary.
\end{theorem}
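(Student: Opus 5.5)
The plan is to work in the straightened coordinates $(s,b)$ of \eqref{eq:straighten}. There the operator $\mathcal K$ of \eqref{eq:K} is a Kolmogorov-type operator: second order in $b$ and first order in $s$ with coefficient $G(b)$. On a strip where $G(b)\neq0$ we divide by $G$ and read $s$ as a time variable (backward or forward according to the sign of $G$). Near a point $b_0\in\mathcal T$, the obstacle problem for $U_\lambda$ then becomes a one-dimensional parabolic obstacle problem $\pm\partial_s U=\frac{1}{G}\bigl(\frac{\eta^2}2U_{bb}+\mu U_b-\delta U+1-z\bigr)$, with smooth coefficients and a smooth $z=e^{s+b^2/(2\sigma\eta)}$ in the source. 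The free boundaries $\{z=a_\lambda(b)\}$ and $\{z=d_\lambda(b)\}$ become the graphs $s=\log a_\lambda(b)-b^2/(2\sigma\eta)$, and similarly for $d_\lambda$. These are Lipschitz by \eqref{eq:logLip}. The key point is to show that they are Lipschitz graphs in the right parabolic sense, that is, graphs over the time variable $s$ rather than over $b$. This is where $|b|>B_*$ enters. By \eqref{eq:cone}, in these coordinates $|U_b|=|v_b+\frac{b}{\sigma\eta}zv_z|$. Since $|v_b|\le L_*z(-v_z)$, we get $\bigl(\frac{|b|}{\sigma\eta}-L_*\bigr)z(-v_z)\le|U_b|\le\bigl(\frac{|b|}{\sigma\eta}+L_*\bigr)z(-v_z)$. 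Because $L_*=c_{r,\kappa}/\sigma$ and $|b|>\eta c_{r,\kappa}$, the lower coefficient is positive. Hence $U_b$ has a strict sign comparable to $U_s=zv_z$ throughout the continuation region near $b_0$. The level sets of $U$ are therefore transversal to $\partial_b$ and are graphs $b=\beta(s)$ with $|\beta'|\le C$, and the free boundary, as a limit of level sets, is a Lipschitz graph over $s$.

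Next comes nondegeneracy. Following the introduction, set $w=(v_\lambda)_z$, or in the new variables a suitable combination proportional to $U_s$. By \eqref{eq:derivativePDE} it solves $(\mathcal A_1-r)w=1$, an equation with positive source, and $w<0$ in $\C_\lambda$ by \eqref{eq:strictderiv} while $w=0$ on the contact set. We also need $a_\lambda<A_0$ on $\mathcal T$. The contact analysis in \cref{thm:boundaries} gives $\le$. To exclude equality, we use that at a contact point with $z=A_0$ the source $1-z-\delta\alpha$ vanishes, and combine a barrier argument with the growth \eqref{eq:anisogrowth} and the strict transversality above. This would show that the free boundary cannot touch $z=A_0$ unless it is vertical, which transversality forbids. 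The same argument gives $d_\lambda>B_\lambda$. With these, $\phi=\alpha-U_\lambda$ solves $\mathcal K\phi-\delta\phi=A_0-\delta\cdot0-\dots$, i.e. a parabolic equation with a source bounded below by a positive constant near the free boundary, so the obstacle gap grows quadratically in the distance to the boundary. The matching upper bound already holds by \eqref{eq:anisogrowth}.

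Finally comes the bootstrap. Both $U_b$ and $U_s$ are solutions of the linearized (differentiated) parabolic equation in the continuation region. They vanish on the Lipschitz free boundary, and by the sign information above both are of one sign there. The parabolic boundary Harnack inequality of Kukuljan \cite{Kukuljan} on Lipschitz domains then gives that the ratio $U_s/U_b$ is $C^{\alpha}$ up to the boundary. Since the normal of the level set is $(U_s,U_b)$, this makes the free boundary $C^{1,\alpha}$. The higher-order boundary Harnack estimates in the same reference, applied iteratively to ratios of tangential derivatives, give $C^{k,\alpha}$ for all $k$. Hence $a_\lambda,d_\lambda\in C^\infty(\mathcal T)$, and Schauder theory for the parabolic equation in a smooth domain gives smoothness of $v_\lambda$ up to the boundary, with $(v_\lambda)_z=(v_\lambda)_b=0$ there. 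The ratcheting case is identical with only the upper obstacle, using the uniformity in $\lambda$ of \cref{thm:intrinsic}. The main obstacle I expect is the step that makes the boundary a Lipschitz graph in the parabolic direction, together with the strict inequality $a_\lambda<A_0$. The first thing I would try there is the quantitative transversality from the sharp constant $L_*$, which is exactly why the threshold $B_*=\eta c_{r,\kappa}$ appears.
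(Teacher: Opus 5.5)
Your parabolic chart is sound, and so is the step that makes each free boundary over a compact $I\subset\mathcal T$ a graph $b=\Gamma(\tau)$ that is Lipschitz in the time variable. Deriving it from level sets of $U$ and the comparability of $|U_b|$ with $-U_s$ is equivalent to the paper's use of \eqref{eq:hslope}. Two smaller points need care:
\begin{itemize}
\item Kukuljan's theorems need the parabolic $C^1$ condition, which such a graph satisfies in one space variable. Cite them in that form, not for Lipschitz domains.
\item $U_s\to0$ at the free boundary has to be derived. It follows from the continuity and vanishing of $U_b$ in \cref{thm:intrinsic} together with your comparability bound.
\end{itemize}

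The genuine gap is the strict inequality $a_\lambda<A_0$, $d_\lambda>B_\lambda$. Your argument cannot give it. \eqref{eq:anisogrowth} is an upper bound on $\phi$ and says nothing against a degenerate contact. Transversality also gives no contradiction. If $a_\lambda(b_0)=A_0$, the free boundary $s=h_f(b)$ touches the curve $\{z=A_0\}=\{s=\log A_0-b^2/(2\sigma\eta)\}$ from below at $b_0$. The slope $-b_0/(\sigma\eta)$ of that curve is exactly the midpoint of the range allowed by \eqref{eq:hslope}, so nothing is forced to be ``vertical''.

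The missing idea is a Hopf-type linear lower bound $-U_s\geq c\,\Delta$ with $\Delta=|b-\Gamma(\tau)|$. You set this up when you note that $v_z$ solves an equation with positive source, but you never prove it.
\begin{itemize}
\item In the chart, $\mathcal P(-U_s)=z/|G|\geq q_0>0$, $-U_s\geq0$, and $U_s=0$ on $\{b=\Gamma(\tau)\}$.
\item Put $y=b-b_0+M(\tau-\tau_0)$ with $M>\operatorname{Lip}\Gamma$. The slanted box $\{\tau_0-h<\tau<\tau_0,\ 0<y<\ell\}$ lies in the continuation region because $\Gamma$ is Lipschitz in time.
\item Comparing $-U_s$ with the subsolution $\varepsilon(\tau-\tau_0+h)\,y(\ell-y)$ on this box gives linear growth at $(\tau_0,b_0)$; this is \cref{lem:qhopf}.
\item The two-sided cone bound \eqref{eq:twosided} then gives $|\phi_b|\geq c\Delta$.
\item Since $\phi,\phi_b,\phi_\tau\to0$ at the boundary, the equation forces $\phi_{bb}\to2(A_0-a_\lambda(b))/\eta^2$, respectively $2(d_\lambda(b)-B_\lambda)/\eta^2$. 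The bound $|\phi_b|\geq c\Delta$ makes this trace positive, and that is what proves $a_\lambda<A_0$ and $d_\lambda>B_\lambda$.
\item Integrating $c\Delta\leq|\phi_b|\leq C\Delta$ gives the pointwise quadratic growth. A source bounded below only gives nondegeneracy in the $\sup$ sense.
\end{itemize}

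The same bound $\phi_b\geq c\operatorname{dist}$ is also a hypothesis you need for Kukuljan's boundary Harnack theorems. The equations for $\phi_\tau$ and $\phi_b$ (your $U_s$ and $U_b$) have nonzero bounded right-hand sides. For inhomogeneous equations a denominator that is merely of one sign does not control the quotient: think of $u=x^2$, $v=x$ for $\partial_{xx}$, where the quotient is $1/x$. So your logical order has to be reversed. Nondegeneracy of $-U_s$ comes first, from its own positive source, and the strict inequalities, the quadratic growth and the boundary Harnack step all follow from it.
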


The proof occupies the rest of this subsection and the next two. We first write each free boundary over $\mathcal T$ locally as a Lipschitz graph for a uniformly parabolic obstacle problem in one space variable.

\paragraph{A parabolic chart}\label{sec:chart}
For a threshold $f\in\{a_\lambda,d_\lambda,a_\infty\}$ put
\begin{equation}
 F_f(b)=\log f(b),\qquad
 h_f(b)=F_f(b)-\frac{b^2}{2\sigma\eta},
 \label{eq:adaptedboundary}
\end{equation}
so that the boundary is $\{s=h_f(b)\}$ in adapted coordinates. By \eqref{eq:logLip}, almost everywhere
\begin{equation}
 -L_*-\frac b{\sigma\eta}\leq h_f'(b)
 \leq L_*-\frac b{\sigma\eta}.\label{eq:hslope}
\end{equation}
Since $L_*=B_*/(\sigma\eta)$, this gives $h_f'(b)\leq-(b-B_*)/(\sigma\eta)<0$ for $b>B_*$ and $h_f'(b)\geq(|b|-B_*)/(\sigma\eta)>0$ for $b<-B_*$. Hence $h_f$ is strictly decreasing on $(B_*,\infty)$ and strictly increasing on $(-\infty,-B_*)$, and on every compact interval $I\subset\{|b|>B_*\}$ there is $c_I>0$ with $|h_f(b_2)-h_f(b_1)|\geq c_I|b_2-b_1|$ for $b_1,b_2\in I$. Thus $h_f$ is invertible on $I$ with a Lipschitz inverse.

Fix a compact interval $I\subset\mathcal T$. Then $G$ has a constant sign on $I$, and we put $\tau=-\sgn(G)\,s$. Since $\partial_s=-\sgn(G)\partial_\tau$,
\begin{equation}
 \frac{\mathcal K-\delta}{|G|}=-\mathcal P,\qquad
 \mathcal P=\partial_\tau-\widetilde a(b)\partial_{bb}
 -\widetilde m(b)\partial_b+\widetilde c(b),
 \label{eq:parabolicchart}
\end{equation}
with $\widetilde a=\eta^2/(2|G|)$, $\widetilde m=\mu/|G|$ and $\widetilde c=\delta/|G|$. Because $G$ is bounded away from zero on $I$, these coefficients are smooth and bounded on $I$, and $0<c_I\leq\widetilde a\leq C_I$, so $\mathcal P$ is uniformly parabolic with $\tau$ as time. In these coordinates the free boundary is $\{\tau=-\sgn(G)h_f(b)\}$, and since $h_f$ has a Lipschitz inverse on $I$, it can be written as $\{b=\Gamma(\tau)\}$ with $\Gamma$ Lipschitz. The two free boundaries are separated by \eqref{eq:boundarybounds}, so near each of them the continuation region lies on one side of this graph.

Since $b$ is the only space variable, the parabolic $C^1$ condition of \cite[Definitions~1.1 and~2.2]{Kukuljan} for the domain $\{b>\Gamma(\tau)\}$ reduces to $|\Gamma(\tau)-\Gamma(\tau')|\leq|\tau-\tau'|^{1/2}\omega(|\tau-\tau'|^{1/2})$ for a modulus of continuity $\omega$. A Lipschitz $\Gamma$ satisfies this with $\omega(\rho)=C\rho$. After replacing $b$ by $-b$ if the continuation region lies on the other side, the continuation region is therefore a $C_p^1$ domain near every free boundary point over $I$.

\subsection{Full smooth fit and nondegeneracy}\label{sec:fullfit}
We next use the cone estimate and the regularity in adapted coordinates to show that both first derivatives vanish at the free boundary over $\mathcal T$, and we derive a parabolic equation for $-U_s$ which gives the nondegeneracy needed for the boundary Harnack argument.

Fix a compact interval $I\subset\mathcal T$ and write $U=U_\lambda$. In the continuation region $U_s=z(v_\lambda)_z<0$ by \eqref{eq:strictderiv}, and since $U_b=(v_\lambda)_b+(b/(\sigma\eta))U_s$, the estimate \eqref{eq:cone} gives
\begin{equation}
 -\left(\frac b{\sigma\eta}+L_*\right)(-U_s)
 \leq U_b\leq
 -\left(\frac b{\sigma\eta}-L_*\right)(-U_s).
 \label{eq:tailderivatives}
\end{equation}
Since $B_*=\sigma\eta L_*$, it follows that $U_b<0$ for $b>B_*$, $U_b>0$ for $b<-B_*$, and
\begin{equation}
 \frac{|b|-B_*}{\sigma\eta}(-U_s)
 \leq |U_b|\leq
 \frac{|b|+B_*}{\sigma\eta}(-U_s).
 \label{eq:twosided}
\end{equation}
By \cref{thm:intrinsic}, $U_b$ is continuous and vanishes on the free boundary. As $I$ is a compact subset of $\{|b|>B_*\}$, \eqref{eq:twosided} gives $0\leq-U_s\leq c_I^{-1}|U_b|$, so $U_s$ also tends to zero at the free boundary. Since $(v_\lambda)_z=U_s/z$ and $(v_\lambda)_b=U_b-(b/(\sigma\eta))U_s$, both $(v_\lambda)_z$ and $(v_\lambda)_b$ extend continuously by zero to the free boundary over $I$.

In the continuation region $(\mathcal K-\delta)U=e^{s+b^2/(2\sigma\eta)}-1$. The coefficients of $\mathcal K$ do not depend on $s$, so differentiation in $s$ gives $(\mathcal K-\delta)(-U_s)=-e^{s+b^2/(2\sigma\eta)}$, and by \eqref{eq:parabolicchart}
\begin{equation}
 \mathcal P(-U_s)
 =\frac{e^{s+\frac{b^2}{2\sigma\eta}}}{|G(b)|}>0,
 \qquad
 U_s=0\quad\text{on }\{b=\Gamma(\tau)\}.
 \label{eq:qparabolic}
\end{equation}
Fix one of the free boundary portions over $I$, with graph $b=\Gamma(\tau)$, and let $\phi=\alpha-U$ near $a_\lambda$ and $\phi=U+\lambda$ near $d_\lambda$. Then $\phi=0$ on the free boundary and $\phi>0$ in the continuation region.

\begin{lemma}\label{lem:qhopf}
Let $J$ be a compact interval in the domain of $\Gamma$. There are constants $c,C,\varrho>0$ such that for every continuation point $(\tau,b)$ with $\tau\in J$ and $\Delta=|b-\Gamma(\tau)|<\varrho$,
\begin{equation}
 -U_s(\tau,b)\geq c\Delta,
 \label{eq:qnondeg}
\end{equation}
and
\begin{equation}
 c\Delta\leq|\phi_b(\tau,b)|\leq C\Delta,
 \qquad
 |\phi_\tau(\tau,b)|\leq C\Delta.
 \label{eq:gapderivativegrowth}
\end{equation}
\end{lemma}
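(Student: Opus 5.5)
The plan is to obtain \eqref{eq:qnondeg} from a Hopf-type lower bound for the positive supersolution $q=-U_s$ of \eqref{eq:qparabolic}. We then transfer it to $\phi_b$ through the two-sided cone bound \eqref{eq:twosided}, and get the upper bounds from the anisotropic growth \eqref{eq:anisogrowth} combined with the Lipschitz continuity of $U_b$ in $b$.

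\emph{Upper bounds.} By \cref{thm:intrinsic}, $U_b$ is Lipschitz in $b$ on compact sets, with constant $C_E$ independent of $\lambda$, and it vanishes on the free boundary. Since $\Gamma(\tau)$ is the free boundary point on the horizontal line through $(\tau,b)$, we get $|\phi_b(\tau,b)|=|U_b(\tau,b)-U_b(\tau,\Gamma(\tau))|\leq C\Delta$. Because $I$ is a compact subset of $\{|b|>B_*\}$, \eqref{eq:twosided} gives $-U_s\leq c_I^{-1}|U_b|\leq C\Delta$. Finally $\phi_\tau=\pm\sgn(G)U_s$ up to sign, so $|\phi_\tau|\leq C\Delta$.

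\emph{Lower bound for $q=-U_s$.} In the chart \eqref{eq:parabolicchart}, $q$ is continuous, positive in the continuation region and zero on $\{b=\Gamma(\tau)\}$. It satisfies $\mathcal Pq=e^{s+b^2/(2\sigma\eta)}/|G|\geq c_0>0$ locally, so $q$ is a strict supersolution of a uniformly parabolic equation in a Lipschitz (hence $C^1_p$) time-dependent domain. I would not rely on an interior positivity bound plus a Hopf lemma, since Hopf may fail in Lipschitz domains. Instead I would use the source term directly. By the smoothing \eqref{eq:hslope}, $\Gamma$ is Lipschitz with some constant $\Lambda$. Place a barrier $w(\tau,b)=\theta\,(b-\Gamma(\tau_0)-\Lambda'(\tau_0-\tau))\,$ minus a quadratic correction, on a small parabolic box behind the boundary point; the simplest choice is $w=\theta(\ell-\frac{M}{2}\ell^2)$ with $\ell=b-\Gamma(\tau_0)+\Lambda'(\tau-\tau_0)$ (a linear function vanishing on a line that lies on the boundary side). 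Then $\mathcal Pw\leq\theta(C+\Lambda'+M\widetilde a)$, which is below $c_0$ for small $\theta$. On the parabolic boundary of a box of size $\varrho$, $w\leq q$: on the lateral boundary near $\Gamma$ we have $w\leq0\leq q$, and on the far side $q\geq c_1>0$ by continuity and positivity on a compact subset of the continuation region. The maximum principle gives $q\geq w$, and evaluating at $\tau=\tau_0$ yields $q\geq c\Delta$.

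The directional issue is the main obstacle. The barrier line must stay inside the region, which requires $\Gamma(\tau)\leq\Gamma(\tau_0)+\Lambda'|\tau-\tau_0|$ only for earlier times $\tau<\tau_0$, and this holds by the Lipschitz bound. So the linear cone works because the equation is first order in $\tau$, and no $C^{1,\alpha}$ regularity of $\Gamma$ is needed. Compactness of $J$ makes all constants uniform. Then \eqref{eq:twosided} gives $|\phi_b|=|U_b|\geq\frac{|b|-B_*}{\sigma\eta}q\geq c\Delta$, which completes \eqref{eq:gapderivativegrowth}.
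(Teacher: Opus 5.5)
Your proposal is correct and follows essentially the paper's proof: a barrier on the backward-tilted wedge $\{0<b-b_0+M(\tau-\tau_0)<\ell,\ \tau_0-h<\tau<\tau_0\}$ driven by the positive source in \eqref{eq:qparabolic}, then \eqref{eq:twosided} for the lower bound on $|\phi_b|$, and $U_{bb}\in L^\infty_{\rm loc}$ together with \eqref{eq:twosided} for the upper bounds. The only difference is in the barrier. The paper's barrier $\varepsilon(\tau-\tau_0+h)\,y(\ell-y)$ vanishes on the whole parabolic boundary, so only $-U_s\geq0$ is needed there. Your barrier is positive on the bottom and far faces, so it needs a uniform bound $q\geq c_1>0$ on those compact subsets of the continuation region. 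That bound is supplied by \eqref{eq:strictderiv} and the compactness of $J$, so you do use interior positivity after all, though harmlessly.
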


\begin{proof}
Fix $\tau_0\in J$ and put $b_0=\Gamma(\tau_0)$. We treat the case in which the continuation region is locally $\{b>\Gamma(\tau)\}$, the other case being symmetric. Let $L_\Gamma$ be a Lipschitz constant of $\Gamma$ on a slightly larger interval and fix $M>L_\Gamma$. For small $h,\ell>0$ put $y(\tau,b)=b-b_0+M(\tau-\tau_0)$ and
\[
 S=\left\{(\tau,b):\tau_0-h<\tau<\tau_0,\ 0<y(\tau,b)<\ell\right\}.
\]
Since $b-\Gamma(\tau)\geq y(\tau,b)+(M-L_\Gamma)(\tau_0-\tau)$, the closure of $S$ without the point $(\tau_0,b_0)$ lies in the continuation region. Let
\[
 w(\tau,b)=\varepsilon(\tau-\tau_0+h)\,y(\tau,b)\bigl(\ell-y(\tau,b)\bigr).
\]
A direct calculation gives
\[
 \mathcal Pw
 =\varepsilon\Bigl[y(\ell-y)+(\tau-\tau_0+h)\bigl(M(\ell-2y)+2\widetilde a-\widetilde m(\ell-2y)+\widetilde c\,y(\ell-y)\bigr)\Bigr]
 \leq\varepsilon C_0
\]
in $S$, with $C_0$ uniform in $\tau_0\in J$. On the same neighborhood \eqref{eq:qparabolic} gives $\mathcal P(-U_s)\geq q_0>0$, and we choose $\varepsilon$ with $\varepsilon C_0\leq q_0$. Then $\mathcal Pw\leq\mathcal P(-U_s)$ in $S$. The function $w$ vanishes on the bottom and on both lateral sides of $S$, where $-U_s\geq0$, so the parabolic comparison principle gives $w\leq-U_s$ in $S$. At $\tau=\tau_0$ and $0<b-b_0<\ell/2$ this yields $-U_s(\tau_0,b)\geq\frac12\varepsilon h\ell\,|b-b_0|$, and uniformity in $\tau_0\in J$ proves \eqref{eq:qnondeg}.

For either choice of $\phi$ we have $|\phi_b|=|U_b|$, so \eqref{eq:twosided} and \eqref{eq:qnondeg} give $|\phi_b|\geq c\Delta$. Since $U_b=0$ on the free boundary and $U_{bb}\in L^\infty_{\rm loc}$, writing $U_b(\tau,b)=\int_{\Gamma(\tau)}^bU_{bb}(\tau,q)dq$ gives $|\phi_b|\leq C\Delta$. Finally $|\phi_\tau|=|U_s|\leq C|U_b|\leq C\Delta$ by \eqref{eq:twosided}.
\end{proof}

\paragraph{Strict forcing}\label{sec:strict}
In the continuation region,
\begin{align}
 (\mathcal K-\delta)(\alpha-U)
 &=A_0-e^{s+\frac{b^2}{2\sigma\eta}},
 \label{eq:uppergap}\\
 (\mathcal K-\delta)(U+\lambda)
 &=e^{s+\frac{b^2}{2\sigma\eta}}-B_\lambda.
 \label{eq:lowergap}
\end{align}
By \eqref{eq:parabolicchart}, $\mathcal P\phi=(e^{s+b^2/(2\sigma\eta)}-A_0)/|G|$ for $\phi=\alpha-U$ and $\mathcal P\phi=(B_\lambda-e^{s+b^2/(2\sigma\eta)})/|G|$ for $\phi=U+\lambda$. Writing
\[
 \widetilde a\phi_{bb}=\phi_\tau-\widetilde m\phi_b+\widetilde c\phi-\mathcal P\phi
\]
and using $\phi\to0$ together with \eqref{eq:gapderivativegrowth}, we see that $\phi_{bb}$ extends continuously from the continuation region to the free boundary, with
\begin{equation}
 \phi_{bb}\big|_{b=\Gamma(\tau)}
 =
 \begin{cases}
 \dfrac{2(A_0-a_\lambda(b))}{\eta^2},
 & \phi=\alpha-U,\\[8pt]
 \dfrac{2(d_\lambda(b)-B_\lambda)}{\eta^2},
 & \phi=U+\lambda.
 \end{cases}
 \label{eq:adaptedcurvature}
\end{equation}
For fixed $\tau$, since $\phi_b(\tau,\Gamma(\tau))=0$, the ratio $|\phi_b(\tau,b)|/|b-\Gamma(\tau)|$ converges to the absolute value of this trace as $b\to\Gamma(\tau)$, and the lower bound in \eqref{eq:gapderivativegrowth} shows that the trace is at least $c>0$. By \eqref{eq:boundarybounds} the trace is nonnegative, so it is positive, which gives $a_\lambda(b)<A_0$ and $d_\lambda(b)>B_\lambda$. Moreover, by \eqref{eq:gapderivativegrowth} the derivative $\phi_b$ does not change sign between the free boundary and a nearby continuation point, so $\phi(\tau,b)=|\int_{\Gamma(\tau)}^b\phi_b(\tau,q)dq|$, and integration of \eqref{eq:gapderivativegrowth} gives
\begin{equation}
 c\Delta^2\leq\phi(\tau,b)\leq C\Delta^2,
 \qquad
 \Delta=|b-\Gamma(\tau)|.
 \label{eq:quadraticcontact}
\end{equation}

\subsection{Boundary Harnack estimates and higher regularity}\label{sec:bh}
\paragraph{The first derivative of the graph}
Fix a free boundary point $P_0=(\tau_0,b_0)$ with $b_0=\Gamma(\tau_0)$, let $Q_r(P_0)=(\tau_0-r^2,\tau_0]\times(b_0-r,b_0+r)$, and let $\Omega_r=Q_r(P_0)\cap\{\phi>0\}$. For small $r$, $\Omega_r$ is a $C_p^1$ domain in the sense of \cite[Definitions~1.1 and~2.2]{Kukuljan}, and its free boundary in $Q_r(P_0)$ is $\{b=\Gamma(\tau)\}$. In $\Omega_r$,
\begin{equation}
 \mathcal P\phi=-\widetilde f,\qquad
 \widetilde f(\tau,b)=
 \begin{cases}
 \dfrac{A_0-e^{-\sgn(G)\tau+\frac{b^2}{2\sigma\eta}}}{|G(b)|},
 & \phi=\alpha-U,\\[4pt]
 \dfrac{e^{-\sgn(G)\tau+\frac{b^2}{2\sigma\eta}}-B_\lambda}{|G(b)|},
 & \phi=U+\lambda,
 \end{cases}
 \label{eq:gappde}
\end{equation}
and $\widetilde f$ is smooth. Since $\phi$ is smooth in the continuation region, differentiation of \eqref{eq:gappde} gives
\begin{equation}
 \mathcal P\phi_\tau=-\widetilde f_\tau,
 \qquad
 \mathcal P\phi_b
 =\widetilde a'\phi_{bb}+\widetilde m'\phi_b-\widetilde c'\phi-\widetilde f_b.
 \label{eq:derivativepair}
\end{equation}
After the zeroth order terms are moved to the right, $\phi_\tau$ and $\phi_b$ therefore solve $\partial_\tau u-\widetilde a\partial_{bb}u-\widetilde m\partial_bu=f$ with bounded right sides, since $\phi_{bb}=\mp U_{bb}$ is bounded. By \eqref{eq:gapderivativegrowth}, $\phi_\tau=\phi_b=0$ on $\{b=\Gamma(\tau)\}$ and $|\phi_b(\tau,b)|\geq c|b-\Gamma(\tau)|$. Taking $r$ small, $\phi_b$ has a fixed sign in $\Omega_{2r}$, and after replacing $\phi_b$ by $-\phi_b$ if necessary we may assume $\phi_b>0$ there. Then
\[
 \phi_b(\tau,b)\geq c|b-\Gamma(\tau)|\geq c\operatorname{dist}\bigl(b,\partial\Omega_{2r,\tau}\bigr)
\]
in $\Omega_{2r}$, where $\Omega_{2r,\tau}$ is the time slice of $\Omega_{2r}$. The coefficients are smooth and uniformly parabolic and the right sides in \eqref{eq:derivativepair} are bounded, so \cite[Theorem~1.2]{Kukuljan} applied in $\Omega_{2r}$ gives, for every $\varepsilon\in(0,1)$,
\begin{equation}
 \frac{\phi_\tau}{\phi_b}
 \in C_p^{1-\varepsilon}\bigl(\overline{\Omega_r}\bigr).
 \label{eq:quotient}
\end{equation}

For small $\ell>0$ the level set $\{\phi=\ell\}\cap Q_r(P_0)$ is a smooth graph $b=\Gamma_\ell(\tau)$, and differentiating $\phi(\tau,\Gamma_\ell(\tau))=\ell$ gives
\[
 \Gamma_\ell'(\tau)=-\frac{\phi_\tau}{\phi_b}\bigl(\tau,\Gamma_\ell(\tau)\bigr).
\]
By \eqref{eq:quotient} the functions $\Gamma_\ell$ are uniformly Lipschitz, and since the parabolic distance between $(\tau,\Gamma_\ell(\tau))$ and $(\tau',\Gamma_\ell(\tau'))$ is at most $C|\tau-\tau'|^{1/2}$, \eqref{eq:quotient} also gives $|\Gamma_\ell'(\tau)-\Gamma_\ell'(\tau')|\leq C|\tau-\tau'|^{(1-\varepsilon)/2}$. Since $\phi(\tau,\Gamma_\ell(\tau))=\ell\to0$, every uniform limit of $\Gamma_\ell$ as $\ell\searrow0$ is $\Gamma$. By the Arzel\`a--Ascoli theorem $\Gamma_{\ell_n}'$ converges uniformly along a subsequence, so $\Gamma\in C^1$ and
\[
 \Gamma'(\tau)=-\left.\frac{\phi_\tau}{\phi_b}\right|_{b=\Gamma(\tau)}.
\]
Hence $\Gamma'$ is locally H\"older continuous of every order below $1/2$, that is, $\Gamma\in C_p^\beta$ for every noninteger $\beta<3$.

\paragraph{Higher order bootstrap}\label{sec:bootstrap}
Suppose that for some noninteger $\beta>2$ the free boundary is of class $C_p^\beta$ and the nearby level curves $b=\Gamma_\ell(\tau)$ have uniformly bounded $C_p^\beta$ norms, which the previous step gives for every $\beta\in(2,3)$. Boundary Schauder estimates for \eqref{eq:gappde} give $\phi\in C_p^\beta$ up to the free boundary. Hence $\phi_\tau\in C_p^{\beta-2}$ and $\phi_b\in C_p^{\beta-1}$, and the right sides in \eqref{eq:derivativepair} belong to $C_p^{\beta-2}$. Then \cite[Theorem~1.3]{Kukuljan} with exponent $\beta-1$ gives $\phi_\tau/\phi_b\in C_p^{\beta-1}$ up to the free boundary, with a local estimate independent of $\ell$. By the formula for $\Gamma_\ell'$, the level curves have uniformly bounded $C_p^{\beta+1}$ norms, and letting $\ell\searrow0$ gives $\Gamma\in C_p^{\beta+1}$. Iterating with noninteger exponents yields $\Gamma\in C^\infty$, with uniform $C_p^k$ bounds for the level curves on smaller neighborhoods for every $k$, and boundary Schauder estimates then make $\phi$ smooth up to the free boundary. Finally, $\Gamma^{-1}=-\sgn(G)h_f$ is Lipschitz by \cref{sec:chart}, so $|\Gamma'|$ is locally bounded away from zero, $\Gamma^{-1}$ is smooth, and so are $h_f$ and the threshold $f$.

\begin{proof}[Proof of \cref{thm:smoothtails}]
Fix a compact interval $I\subset\mathcal T$. The chart of \cref{sec:chart} represents each free boundary over $I$ as a parabolic $C^1$ graph, the full smooth fit argument and \cref{lem:qhopf} give the first order boundary estimates and the nondegeneracy, and the argument of \cref{sec:bh} shows that $a_\lambda,d_\lambda\in C^\infty(I)$. Since $I$ is arbitrary, $a_\lambda,d_\lambda\in C^\infty(\mathcal T)$. The strict inequalities $a_\lambda<A_0$ and $d_\lambda>B_\lambda$ were proved after \eqref{eq:adaptedcurvature}. The same argument without the lower obstacle applies to the ratcheting problem and gives $a_\infty\in C^\infty(\mathcal T)$ and $a_\infty<A_0$. The boundary Schauder estimates of \cref{sec:bootstrap} make $v_\lambda$ smooth up to each free boundary over $\mathcal T$, the full smooth fit gives $(v_\lambda)_z=(v_\lambda)_b=0$ there, and \eqref{eq:quadraticcontact} is the quadratic growth of the gap.
\end{proof}

\subsection{Characteristic points}\label{sec:characteristic}
In the coordinates $(x,b)=(\log z,b)$ the diffusion direction is $(-b/\sigma,-\eta)$.

\begin{proposition}\label{prop:characteristic}
Each $h_f$ in \eqref{eq:adaptedboundary} attains a global maximum, and every maximizer lies in $[-B_*,B_*]$. If $f$ is differentiable at a maximizer, then
\begin{equation}
 \nu_f:=-\frac b\sigma+\eta F_f'(b)
 =\eta h_f'(b)=0,
 \label{eq:normal}
\end{equation}
so the diffusion direction is tangent to the free boundary at such a point.
\end{proposition}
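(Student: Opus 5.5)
The plan is to use the slope bound \eqref{eq:hslope}, which follows from the logarithmic Lipschitz estimate \eqref{eq:logLip}, together with the local boundedness of $F_f=\log f$ on compact intervals from \cref{thm:boundaries}. The function $h_f(b)=F_f(b)-b^2/(2\sigma\eta)$ is locally Lipschitz, hence continuous. Integrating \eqref{eq:hslope}, as already done in \cref{sec:chart}, shows that $h_f$ is strictly decreasing on $[B_*,\infty)$ and strictly increasing on $(-\infty,-B_*]$.

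\textbf{Existence and location of a maximizer.} The monotonicity just described gives $h_f(b)<h_f(B_*)$ for $b>B_*$ and $h_f(b)<h_f(-B_*)$ for $b<-B_*$. Therefore
\[
\sup_{\R}h_f=\max_{[-B_*,B_*]}h_f,
\]
and the right side is attained because $h_f$ is continuous on a compact interval. For the same reason, no point with $|b|>B_*$ can be a maximizer, since $h_f$ is strictly smaller there than at $\pm B_*$.

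If $B_*=0$, the interval reduces to the single point $\{0\}$, and the argument goes through unchanged. In fact $B_*>0$, since $d_*<0$.

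The same reasoning applies to $f=a_\lambda$, $f=d_\lambda$, and to $f=a_\infty$. For $a_\infty$, the logarithmic Lipschitz bound follows from \cref{thm:cone}, which covers the ratcheting solution, exactly as in \cref{thm:boundaries}.

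\textbf{Tangency.} Suppose $f$ is differentiable at a maximizer $b_0$. Then $h_f$ is differentiable there and has an interior global maximum, so $h_f'(b_0)=0$. By definition,
\[
\eta h_f'(b)=\eta F_f'(b)-\frac{b}{\sigma},
\]
which is exactly $\nu_f$. This proves \eqref{eq:normal}.

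For the geometric interpretation, work in the coordinates $(x,b)$. The boundary is the graph $x=F_f(b)$, so $(F_f'(b_0),1)$ is a tangent vector. The diffusion direction $(-b_0/\sigma,-\eta)$ is parallel to this tangent precisely when
\[
-\frac{b_0}{\sigma}=-\eta F_f'(b_0),
\]
that is, when $\nu_f=0$. So at such a point the diffusion direction is tangent to the free boundary.

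\textbf{Main obstacle.} The only delicate step is to make sure that the strict monotonicity outside $[-B_*,B_*]$ holds up to the endpoints $\pm B_*$. This is needed to rule out maximizers escaping to infinity and to confine them to $[-B_*,B_*]$. It follows by integrating the almost-everywhere derivative bound \eqref{eq:hslope} of the absolutely continuous function $h_f$. Everything else is elementary.
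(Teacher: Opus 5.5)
Your proof is correct and follows the paper's argument: both rest on the slope bound \eqref{eq:hslope} derived from \eqref{eq:logLip}, and both then use $h_f'=0$ at a differentiable maximizer. The only difference is in the existence step. The paper uses the coercivity bound $h_f(b)\leq F_f(0)+L_*|b|-b^2/(2\sigma\eta)\to-\infty$, whereas you integrate \eqref{eq:hslope} up to the endpoints $\pm B_*$ to reduce to a compact interval, which gives existence and localization of the maximizers at once.
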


\begin{proof}
By \eqref{eq:logLip}, $h_f(b)\leq F_f(0)+L_*|b|-b^2/(2\sigma\eta)\to-\infty$ as $|b|\to\infty$, so $h_f$ attains a global maximum. By \eqref{eq:hslope}, $h_f$ is strictly increasing on $(-\infty,-B_*)$ and strictly decreasing on $(B_*,\infty)$, so every maximizer lies in $[-B_*,B_*]$, and at a maximizer where $f$ is differentiable $h_f'=0$.
\end{proof}

The free boundary is $\{x=F_f(b)\}$, with normal vector $(1,-F_f'(b))$, so $\nu_f$ is the component of the diffusion direction normal to the free boundary. At a point where the free boundary is smooth, smooth fit and the continuation equation give
\begin{equation}
 \frac12\left(\frac b\sigma-\eta F_f'(b)\right)^2
 \widehat v_{xx}^{\,\mathcal C}
 =
 \begin{cases}
 a_\lambda(b)-A_0, & f=a_\lambda,\\
 d_\lambda(b)-B_\lambda, & f=d_\lambda,
 \end{cases}
 \qquad
 \widehat v(x,b)=v(e^x,b),
 \label{eq:normalcurvature}
\end{equation}
where $\widehat v_{xx}^{\,\mathcal C}$ is the trace of $\widehat v_{xx}$ from the continuation region. If $\nu_f=0$ and the right side of \eqref{eq:normalcurvature} is nonzero, the second derivatives cannot remain bounded up to the free boundary.

Finally, $G''=(2\kappa\sigma-\eta)/(\sigma^2\eta)>0$, so $G$ has at most two zeros, and the parabolic change of variables fails at these points. Hence the part of the free boundary not covered by \cref{thm:smoothtails} lies over $[-B_*,B_*]$ or over a zero of $G$. Since $B_*>0$, the interval $[-B_*,B_*]$ always contains zero, and it may contain $\bar b$ together with most of the stationary distribution of the premium, as in \cref{sec:numerics}. Over this interval we prove the Lipschitz bound of \cref{thm:boundaries} and the smooth fit of \cref{thm:intrinsic}, but no further regularity of the thresholds.

\section{Optimal policies and the ratcheting limit}\label{sec:strategy}
The marginal equation determines the thresholds but not the level of the dual value. We recover that level from the contact reactions, construct the optimal consumption process, and pass to wealth and investment through the budget identity. We then treat ratcheting, the limit of infinite reduction cost, and an explicit example with constant investment opportunities.

\subsection{Contact reactions}\label{sec:switching}
Define the nonnegative contact reactions
\begin{equation}
 m_a(z,b)=(A_0-z)\chi_{\{z\leq a_\lambda(b)\}},\qquad
 m_d(z,b)=(z-B_\lambda)\chi_{\{z\geq d_\lambda(b)\}}.
 \label{eq:reactions}
\end{equation}
They satisfy $0\leq m_a\leq A_0$ and $0\leq m_d\leq z$. Let
\begin{equation}
 (\mathcal R_\delta f)(z,b)=\E_{z,b}\int_0^\infty
 e^{-\delta t}f(Z_t,b_t)dt.\label{eq:resolvent}
\end{equation}
Both reaction potentials are finite by \eqref{eq:linear-moment}.

\begin{lemma}\label{lem:globalgenerator}
The marginal value satisfies, on the whole state space, in distributions and almost everywhere,
\begin{equation}
 (\delta-\A)v_\lambda=1-z-m_a+m_d,\label{eq:globalreaction}
\end{equation}
and it has the resolvent representation
\begin{equation}
 v_\lambda=g-\mathcal R_\delta m_a+\mathcal R_\delta m_d.
 \label{eq:vpotential}
\end{equation}
\end{lemma}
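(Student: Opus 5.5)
We plan to derive both identities from the stationary penalized solutions $v^\epsilon$ built in the proof of \cref{thm:intrinsic}, and to identify the limit of the penalty terms with the contact reactions \eqref{eq:reactions}. Recall that $v^\epsilon$ is a mild stationary solution with values in $\mathcal C_1$. The mild identity on $[0,T]$, letting $T\to\infty$ with the growth bound $|v^\epsilon|\leq C(1+z)$ and \eqref{eq:linear-moment}, gives $v^\epsilon=\mathcal R_\delta[1-z-\beta_\epsilon(v^\epsilon)]$. A direct computation gives $\A g=-(\delta-r)z/r$, hence $(\delta-\A)g=1-z$, and since $\mathcal R_\delta 1=1/\delta$ and $\mathcal R_\delta z=z/r$ we have $g=\mathcal R_\delta(1-z)$. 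Therefore
\[
 v^\epsilon=g-\mathcal R_\delta\beta_\epsilon(v^\epsilon),
\]
and in distributions $(\delta-\A)v^\epsilon=1-z-\beta_\epsilon(v^\epsilon)$.

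\textbf{Identifying the limit of the penalty.} By \eqref{eq:smoothpenaltybounds}, the functions $\beta_\epsilon(v^\epsilon)$ satisfy $-(1+z)\leq\beta_\epsilon(v^\epsilon)\leq2$ uniformly in $\epsilon$. Hence a subsequence converges weak-$*$ in $L^\infty_{\rm loc}$ to some $\mu$ with the same bounds. Since $v^\epsilon\to v_\lambda$ locally uniformly, passing to the limit in the distributional equation gives $(\delta-\A)v_\lambda=1-z-\mu$. We then identify $\mu$ on three pieces.
\begin{itemize}
 \item On compact subsets of $\C_\lambda$ the penalty vanishes for small $\epsilon$, as noted before \eqref{eq:derivativePDE}, so $\mu=0$ there.
 \item In the interior of $\{v_\lambda=\alpha\}$ we have $(\delta-\A)v_\lambda=\delta\alpha$, which forces $\mu=1-z-\delta\alpha=A_0-z=-m_a$.
 \item In the interior of $\{v_\lambda=-\lambda\}$ we have $(\delta-\A)v_\lambda=-\delta\lambda$, which forces $\mu=z-B_\lambda=m_d$.
\end{itemize}
By \cref{thm:boundaries} the two boundary graphs have Lebesgue measure zero, so $\mu=m_d-m_a$ almost everywhere. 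The limit is therefore independent of the subsequence, and the full family converges. This proves \eqref{eq:globalreaction} in distributions.

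\textbf{The almost-everywhere statement.} In the adapted coordinates \eqref{eq:straighten}, $\A$ becomes $\mathcal K$, which involves only $U_{bb}$, $U_b$ and $U_s$. These are all in $L^\infty_{\rm loc}$ by \cref{thm:intrinsic}, so $\A v_\lambda$ is a locally bounded function. The distributional identity then holds pointwise almost everywhere; this change of variables is a smooth diffeomorphism, so null sets are preserved.

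\textbf{The resolvent representation (main obstacle).} We expect the hard step to be passing to the limit inside $\mathcal R_\delta\beta_\epsilon(v^\epsilon)$. Weak-$*$ convergence must be converted into pointwise convergence of expectations along a degenerate diffusion. We would argue as follows.
\begin{itemize}
 \item By the bracket condition \eqref{eq:bracket} and H\"ormander's theorem, the law of $(Z_t,b_t)$ has a density for each $t>0$. Hence $\mathcal R_\delta f(z,b)=\int f\,\rho_{z,b}$ for a resolvent kernel $\rho_{z,b}$ that is absolutely continuous with respect to Lebesgue measure.
 \item Since $\rho_{z,b}\in L^1$, weak-$*$ convergence in $L^\infty_{\rm loc}$ passes through the integral on every compact set.
 \item For the tails we use the uniform domination $|\beta_\epsilon(v^\epsilon)|\leq 2+z$ together with the finiteness of $\mathcal R_\delta(1+z)$ from \eqref{eq:linear-moment}. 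These show that $\int(1+z)\rho_{z,b}$ outside a large compact set is arbitrarily small, uniformly in $\epsilon$.
\end{itemize}
This gives $\mathcal R_\delta\beta_\epsilon(v^\epsilon)\to\mathcal R_\delta(m_d-m_a)$ pointwise. Letting $\epsilon\searrow0$ in $v^\epsilon=g-\mathcal R_\delta\beta_\epsilon(v^\epsilon)$ then yields \eqref{eq:vpotential}.

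If the absolute continuity of the resolvent kernel proves delicate to justify, we would fall back on a generalized It\^o formula for $v_\lambda$. It would be applied in adapted coordinates, where only $U_{bb}$ appears as a second derivative and it is bounded. We would combine it with the occupation-time density of the one-dimensional martingale part in $b$ and a localization argument. Dynkin's formula with \eqref{eq:globalreaction} and the transversality $e^{-\delta T}\E|v_\lambda|\to0$ would then give \eqref{eq:vpotential}.
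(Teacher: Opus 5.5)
Your argument works once you fix a sign slip in identifying the weak limit. In the interior of the upper contact set, $(\delta-\A)v_\lambda=\delta\alpha$ and $(\delta-\A)v_\lambda=1-z-\mu$ give $\mu=1-z-\delta\alpha=A_0-z=+m_a$, not $-m_a$. In the interior of the lower contact set, $-\delta\lambda=1-z-\mu$ gives $\mu=B_\lambda-z=-m_d$, not $m_d$. So $\mu=m_a-m_d$ almost everywhere. The sign of the penalty already forces this, since it is positive above $\alpha$ and negative below $-\lambda$. Your own bound $\mu\leq2$ also rules out $\mu=z-B_\lambda$ on the lower contact set for large $z$. With $\mu=m_d-m_a$ as you wrote it, you would get $(\delta-\A)v_\lambda=1-z+m_a-m_d$ and $v_\lambda=g+\mathcal R_\delta m_a-\mathcal R_\delta m_d$, the opposite of the claim. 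With the correct sign, $\mathcal R_\delta\beta_\epsilon(v^\epsilon)\to\mathcal R_\delta(m_a-m_d)$, and both \eqref{eq:globalreaction} and \eqref{eq:vpotential} follow as you describe.

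With that correction your route is valid and genuinely different from the paper's.

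\textbf{The paper's route.} It works with $v_\lambda$ directly. For \eqref{eq:globalreaction} it uses $U_s,U_b,U_{bb}\in L^\infty_{\rm loc}$ and the fact that $U_b(s,\cdot)$ is Lipschitz and vanishes on the contact sets, so $U_{bb}=0$ almost everywhere there. This needs no information about the measure of the free boundary. For \eqref{eq:vpotential} it proves an It\^o formula for $U$ itself: it mollifies in $(s,b)$, controls the commutators with $\mathcal K$, and uses \eqref{eq:Ubholder} and the smooth transition density between a positive time $t_0$ and a stopped horizon. It then removes the localization with the class D estimate.

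\textbf{Your route.} You carry the identity $v^\epsilon=g-\mathcal R_\delta\beta_\epsilon(v^\epsilon)$, which the mild formulation gives for free, to the limit.
\begin{itemize}
\item The uniform bound $|\beta_\epsilon(v^\epsilon)|\leq2+z$ makes the limit reaction an $L^\infty_{\rm loc}$ function, so nothing singular can sit on the free boundaries.
\item The null boundary graphs of \cref{thm:boundaries} then determine it.
\item Absolute continuity of the resolvent measure, which rests on the same hypoelliptic density the paper uses, lets weak-$*$ convergence pass through $\mathcal R_\delta$. It also makes the almost-everywhere identification sufficient inside the resolvent.
\end{itemize}

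\textbf{What each buys.} Your route avoids It\^o's formula for a non-$C^2$ function and all localization at the level of $v_\lambda$. It needs \cref{thm:intrinsic} only to interpret the identity pointwise almost everywhere. The paper's route directly produces the stopped generator identity that it reuses for the Dynkin game and the ratcheting put. Your representation recovers the same martingale property through the strong Markov property of the resolvent. Your fallback (an It\^o formula for $U$ in adapted coordinates plus localization) is close to what the paper actually does, though the paper justifies It\^o's formula by mollification rather than occupation densities; you do not need it.
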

\begin{proof}
In adapted coordinates $U_s,U_b,U_{bb}\in L^\infty_{\rm loc}$. The first weak derivatives vanish almost everywhere on either contact set. For each fixed $s$, $U_b(s,\cdot)$ is Lipschitz and vanishes on the contact set, so $U_{bb}=0$ almost everywhere there. In the continuation region the equation holds classically. Hence $\delta v-\A v$ equals $1-z$ in the continuation region, $\delta\alpha$ on the upper contact set and $-\delta\lambda$ on the lower contact set, almost everywhere, which is \eqref{eq:globalreaction}. In particular the right side has no singular part on the free boundaries.

This regularity also justifies It\^o's formula along the uncontrolled diffusion. Mollify $U$ in $(s,b)$ on a larger compact rectangle. The second order coefficient of $\mathcal K$ is constant and its first order coefficients $\mu$ and $G$ are smooth, so $\mathcal K(U*\rho_\varepsilon)-(\mathcal KU)*\rho_\varepsilon\to0$ locally in $L^1$, with commutators bounded by $C\varepsilon(\|U_s\|_\infty+\|U_b\|_\infty)$. Moreover $\partial_b(U*\rho_\varepsilon)\to U_b$ uniformly on smaller rectangles by \eqref{eq:Ubholder}. Apply It\^o's formula to the mollified functions between a positive time $t_0$ and a stopped finite horizon. The transition density is smooth by \eqref{eq:bracket} and bounded on compact sets for times bounded away from zero, and the law of the process killed at the boundary of the rectangle is dominated by that of the unrestricted process. Thus the drift terms converge in expected time integral and the martingale terms converge locally in $L^2$, and letting $t_0\searrow0$ proves the discounted generator identity for $U$ itself.

We then remove the rectangle and the horizon. The drift in \eqref{eq:globalreaction} is bounded by $C(1+z)$, the discounted terminal value is uniformly integrable on a fixed horizon because it is bounded by a constant plus a multiple of the state-price exponential, and the class D estimate of \cref{sec:pricingappendix} justifies the localization. The remaining terminal expectation tends to zero because $v_\lambda$ is bounded and $\delta>0$. Since $\mathcal R_\delta(1-z)=g$, this gives \eqref{eq:vpotential}. The same argument applies to the ratcheting value, whose growth is linear, using the tail bound $\alpha e^{-\delta T}+ze^{-rT}/r$.
\end{proof}

\subsection{Switching potentials}\label{sec:potentials}
Define
\begin{equation}
 F_0^\lambda=\mathcal R_\delta m_a,\qquad
 F_1^\lambda=F_0^\lambda+v_\lambda
           =g+\mathcal R_\delta m_d.\label{eq:binarypotentials}
\end{equation}
They satisfy
\begin{equation}
 0\leq F_0^\lambda\leq K,
 \qquad \max\{g,-\lambda\}\leq F_1^\lambda\leq\frac1\delta.
 \label{eq:binarybounds}
\end{equation}
The upper bound for $F_1^\lambda$ follows from $F_1^\lambda=\mathcal R_\delta(1-z+m_d)$ and $1-z+m_d\leq1$, and the lower bounds from \eqref{eq:binarypotentials} and $v_\lambda\geq-\lambda$.

\begin{proposition}\label{prop:binarygame}
For $i=0,1$, the potentials \eqref{eq:binarypotentials} are the values
\begin{equation}
 F_i^\lambda(z,b)=\sup_I\E_{z,b}\left[
 \int_0^\infty e^{-\delta t}(1-Z_t)I_tdt
 -\alpha\sum_{\rm entries\ \tau}e^{-\delta\tau}
 -\lambda\sum_{\rm exits\ \tau}e^{-\delta\tau}\right],
 \quad I_{0-}=i,\label{eq:binary}
\end{equation}
of the problem of switching a single consumption layer on and off. An optimal rule switches on when $z\leq a_\lambda(b)$ and off when $z\geq d_\lambda(b)$, and its switches do not accumulate in finite time. The potentials solve the switching system
\begin{align}
 \max\{(\A-\delta)F_0,F_1-F_0-\alpha\}&=0,\label{eq:binaryVI0}\\
 \max\{(\A-\delta)F_1+1-z,F_0-F_1-\lambda\}&=0.\label{eq:binaryVI1}
\end{align}
\end{proposition}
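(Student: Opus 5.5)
The approach is a verification argument: apply a generalized It\^o formula to the candidate potentials along an arbitrary switching rule, and then exhibit the threshold rule that attains equality. The first task is to check the switching system \eqref{eq:binaryVI0}--\eqref{eq:binaryVI1} almost everywhere and in distributions from \cref{lem:globalgenerator}. Since $F_0^\lambda=\mathcal R_\delta m_a$, the same localization and mollification argument as in that lemma gives $(\delta-\A)F_0^\lambda=m_a\geq0$. From \eqref{eq:globalreaction} and $F_1^\lambda=F_0^\lambda+v_\lambda$ we get $(\delta-\A)F_1^\lambda=1-z+m_d$. Thus $(\A-\delta)F_0\leq0$ with equality off $\{z\leq a_\lambda(b)\}$, and $(\A-\delta)F_1+1-z=-m_d\leq0$ with equality off $\{z\geq d_\lambda(b)\}$. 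The obstacle parts are $F_1-F_0-\alpha=v_\lambda-\alpha\leq0$ and $F_0-F_1-\lambda=-v_\lambda-\lambda\leq0$. On the upper contact set the first is an equality. On $\{z\geq d_\lambda\}$ we have $v_\lambda=-\lambda$, so the second is an equality. Hence each max in the switching system is zero, pointwise in the contact/continuation decomposition, and this also gives the viscosity sense through the continuity of $v_\lambda$.

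Next comes the upper bound in \eqref{eq:binary}. Take an arbitrary switching process $I$ with entry times and exit times. Between switches, apply the It\^o formula for $e^{-\delta t}F_{I_t}(Z_t,b_t)$ that was justified in the proof of \cref{lem:globalgenerator}. This is valid because $F_i$ have the regularity of $v_\lambda$ (locally bounded $U_s,U_b,U_{bb}$ in adapted coordinates) plus the resolvent of bounded functions. At an entry the jump is $F_1-F_0=v_\lambda\leq\alpha$, and at an exit it is $F_0-F_1\leq\lambda$. Summing gives
\[
 F_i(z,b)\geq\E\Bigl[\int_0^{T}e^{-\delta t}(1-Z_t)I_tdt-\alpha\sum_{\tau\leq T}e^{-\delta\tau}-\lambda\sum_{\tau\leq T}e^{-\delta\tau}\Bigr]+\E[e^{-\delta T}F_{I_T}].
\]
By \eqref{eq:binarybounds}, $F$ is bounded between $g$ and constants. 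Together with \eqref{eq:linear-moment}, this makes the terminal term vanish as $T\to\infty$. Rules with infinitely many switches in finite time have payoff $-\infty$ when $\alpha+\lambda>0$, or they can be handled by truncation.

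For optimality, define $I^*$ by switching on at the first hitting of $\{z\leq a_\lambda(b)\}$ from state $0$, and off at the hitting of $\{z\geq d_\lambda(b)\}$ from state $1$. If the initial point is in the respective region, switch at time $0$. Along $I^*$, the process in state $0$ stays where $m_a=0$ until entry, and in state $1$ it stays where $m_d=0$. At each switch the jump equals exactly $\alpha$ or $\lambda$ by the contact identities, so all inequalities above become equalities. The main obstacle is non-accumulation of switches. Each round trip requires $Z$ to travel from $z\leq a_\lambda(b)\leq A_0$ to $z\geq d_\lambda(b)\geq B_\lambda$, with $B_\lambda/A_0>1$ by \eqref{eq:boundarybounds}. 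So $\log Z$ must cross a gap of at least $\log(B_\lambda/A_0)>0$, which is uniform in $b$. Continuity of paths then gives only finitely many crossings on $[0,T]$ almost surely. The remaining delicate point is the Itô formula at the hitting times where the state is on the Lipschitz free boundary. I would handle it as in \cref{lem:globalgenerator}: the generator identity holds with no singular part, and the switching jumps are accounted for separately, so the expected cost terms are finite via the geometric-type bound on crossing counts in discounted expectation.
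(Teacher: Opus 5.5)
\textbf{Verdict: there is a gap in how you justify the martingale step.} Your overall architecture is the paper's:
\begin{itemize}
\item telescoping over holding intervals with the obstacle inequalities $F_1-F_0=v_\lambda\leq\alpha$ and $F_0-F_1\leq\lambda$;
\item equality for the threshold rule, because $m_a=0$ before an entry, $m_d=0$ before an exit, and the jumps are exactly $\alpha$ and $\lambda$ on the contact sets;
\item non-accumulation from $A_0<B_\lambda$. Your uniform gap $\log(B_\lambda/A_0)$ in $\log Z$ is a clean way to say this.
\end{itemize}

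The gap is that you run the argument through It\^o's formula for $e^{-\delta t}F_{I_t}(Z_t,b_t)$. You justify this by the mollification argument of \cref{lem:globalgenerator}, on the grounds that the $F_i$ ``have the regularity of $v_\lambda$''. That argument needs two things from the function it is applied to, in adapted coordinates:
\begin{itemize}
\item locally bounded first derivatives, for the commutator bound;
\item a continuous $b$-derivative, for the convergence of the martingale terms.
\end{itemize}
For $F_0=\mathcal R_\delta m_a$ neither is available at this stage. The source $m_a$ jumps across $z=a_\lambda(b)$ wherever $a_\lambda<A_0$. A bounded discontinuous source does not by itself give bounded first derivatives for this degenerate operator. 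Writing $F_1=F_0+v_\lambda$ only moves the problem. In the paper, local Lipschitz continuity of $F_i$ is proved only \emph{after} this proposition, by coupling the flows in the switching representation \eqref{eq:binary}. As written, your step is therefore circular. Also, the ``delicate point'' at hitting times on the Lipschitz free boundary is an artifact of this route.

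The fix is to drop It\^o's formula entirely, as the paper does. Since $F_0=\mathcal R_\delta m_a$ and $F_1=g+\mathcal R_\delta m_d=\mathcal R_\delta(1-z+m_d)$, the strong Markov property alone shows that the following are martingales:
\[
e^{-\delta t}F_0(Z_t,b_t)+\int_0^te^{-\delta s}m_a(Z_s,b_s)\,ds,
\qquad
e^{-\delta t}F_1(Z_t,b_t)+\int_0^te^{-\delta s}\bigl(1-Z_s+m_d(Z_s,b_s)\bigr)\,ds.
\]
Optional sampling at the switching times then gives your inequalities, using the class~D control of \cref{sec:pricingappendix}, and no regularity of $F_i$ is needed.

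Three smaller points.
\begin{itemize}
\item You do not need, and do not supply, a ``geometric-type'' crossing bound to get finite expected switching costs for the threshold rule. The finite-horizon equality bounds the expected discounted costs up to $T$ by $1/\delta$ plus the bounds \eqref{eq:binarybounds} on the potentials and the terminal term, uniformly in $T$. Alternation of entries and exits handles the case where one cost vanishes.
\item The identity $(\delta-\A)F_0=m_a$ in distributions is a duality property of the resolvent. It does not follow from the mollification argument, which goes from regularity to It\^o's formula.
\item Almost-everywhere validity of the switching system plus continuity does not give the viscosity sense for a degenerate operator, so drop that remark. The distributional (generator) sense you actually obtain is what the paper proves.
\end{itemize}
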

\begin{proof}
By the strong Markov property of the resolvent, the discounted potentials plus their integrated sources are martingales, with source $m_a$ in mode zero and $1-Z+m_d$ in mode one. The obstacle bounds give $F_0\geq F_1-\alpha$ and $F_1\geq F_0-\lambda$. For a finite switching sequence, we stop and telescope these martingales over the holding intervals. Since the reaction flows are nonnegative and the switching inequalities hold in the right direction, the starting potential dominates the running reward minus switching costs plus the discounted terminal potential. Localization and \eqref{eq:binarybounds} justify the expectations. For an admissible infinite sequence we first truncate the number of switches and the horizon and then use the integrability of the costs and \eqref{eq:linear-moment}.

For the threshold rule each switching inequality is an equality, $m_a=0$ before entry in mode zero, and $m_d=0$ before exit in mode one, so every inequality above becomes an equality. On compact sets the two closed contact sets are separated, because $A_0<B_\lambda$. A continuous path has compact range and is uniformly continuous on a finite interval, so it cannot alternate between these sets infinitely often in finite time. The finite horizon equalities and the bounded positive running revenue imply that the discounted switching costs are integrable, and the positive cycle cost $\alpha+\lambda$ covers the case in which one switching cost is zero. The discounted terminal potentials vanish as the horizon tends to infinity, which proves optimality and the equations.
\end{proof}

The potentials $F_i^\lambda$ are continuous, which also follows from the resolvent by splitting off a short time interval and using the smoothness of the transition density afterwards. After \cref{prop:binarygame}, coupling the state flows in \eqref{eq:binary} uniformly over switching rules shows that they are locally Lipschitz, which we use for the integrals over consumption levels below.

The marginal value is also the value of the Dynkin game
\begin{equation}
 v_\lambda(z,b)=\sup_\tau\inf_\rho\E_{z,b}\left[
 \int_0^{\tau\wedge\rho}e^{-\delta t}(1-Z_t)dt
 -\lambda e^{-\delta\tau}\chi_{\{\tau<\rho\}}
 +\alpha e^{-\delta\rho}\chi_{\{\rho\leq\tau,\rho<\infty\}}\right],
 \label{eq:game}
\end{equation}
where an infinite stopping time gives no terminal payment. The two orders of the game give the same value, and the first contact times form a saddle point. Both facts follow from the same stopped generator inequalities or from the construction for separated obstacles in \cite{CK96,Boetius}.

\paragraph{Integral identities for the thresholds}
With $m_a$ and $m_d$ formed from the thresholds, evaluation of \eqref{eq:vpotential} on the two free boundaries gives
\begin{align}
 \alpha&=g(a_\lambda(b))-(\mathcal R_\delta m_a)(a_\lambda(b),b)
                         +(\mathcal R_\delta m_d)(a_\lambda(b),b),\label{eq:boundaryintegral1}\\
 -\lambda&=g(d_\lambda(b))-(\mathcal R_\delta m_a)(d_\lambda(b),b)
                         +(\mathcal R_\delta m_d)(d_\lambda(b),b).\label{eq:boundaryintegral2}
\end{align}
The thresholds are determined by $v_\lambda$ through \eqref{eq:contacts}, and \eqref{eq:boundaryintegral1}--\eqref{eq:boundaryintegral2} are necessary conditions which they satisfy. We do not prove that the two integral equations alone have a unique solution in a class of curves, as is known for some stopping problems in one dimension \cite{PS06}, so they are not used as an independent characterization.

\subsection{The dual value and optimal consumption}\label{sec:layers}
The potentials of the contact reactions fix the level of $J_\lambda$, and their integral over consumption levels gives both the dual value and its optimal control.

\begin{theorem}\label{thm:layers}
Under \cref{ass:main}, the dual value is
\begin{align}
 J_\lambda(y,c,b)={}&\frac{u(c)}\delta-\frac{yc}r
 +\int_c^\infty a^{-\gamma}F_0^\lambda(ya^\gamma,b)da\nonumber\\
 &+\int_0^c a^{-\gamma}
       [F_1^\lambda(ya^\gamma,b)-g(ya^\gamma)]da.
 \label{eq:layerformula}
\end{align}
Both integrals are finite and nonnegative. The value is continuously differentiable in $c$, with
\begin{equation}
 (J_\lambda)_c(y,c,b)=c^{-\gamma}v_\lambda(yc^\gamma,b).
 \label{eq:marginalidentity}
\end{equation}
An optimal consumption process is obtained by nesting the optimal switching rules for all levels $a>0$.
\end{theorem}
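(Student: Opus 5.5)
The plan is to prove \eqref{eq:layerformula} by a layer decomposition. Any admissible consumption process is encoded by the family of switching indicators $I^a_t=\chi_{\{c_t>a\}}$ over levels $a>0$. The dual objective is then integrated level by level against the measure $a^{-\gamma}da=du(a)$, and each level is handled by \cref{prop:binarygame} in the state $z=ya^\gamma$.

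The first step is an identity. Write $u(c_t)=u(c)+\int_c^{c_t}a^{-\gamma}da$ and $Y_tc_t=Y_tc+\int_c^{c_t}Y_tda$, and split levels into $a\geq c$ (initially off) and $a<c$ (initially on). The layer flow at level $a$ is $a^{-\gamma}(1-Y_ta^\gamma)=a^{-\gamma}(1-Z^a_t)$, where $Z^a=Ya^\gamma$ follows \eqref{eq:uncontrolledZ} from $ya^\gamma$. Upward crossings of level $a$ cost $\alpha a^{-\gamma}$, and likewise for downward crossings. Hence $dq^\pm_t=\int a^{-\gamma}\,d(\text{entries/exits at }a)\,da$, by a coarea/Banach indicatrix formula for the BV path $q$.

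This rewrites the objective of \eqref{eq:dual} as
\[
 \frac{u(c)}\delta-\frac{yc}r+\int_c^\infty a^{-\gamma}\Pi_0(I^a)\,da+\int_0^c a^{-\gamma}\bigl[\Pi_1(I^a)-g(ya^\gamma)\bigr]da.
\]
Here $\Pi_i$ is the single-layer payoff in \eqref{eq:binary}. For levels below $c$ we subtract the always-on payoff $g$, using \eqref{eq:linear-moment}. The exchange of expectation and $da$-integral is justified by Tonelli applied separately to the positive and negative parts, using admissibility (finite negative utility and costs) and the terminal condition \eqref{eq:utilityTV}.

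Next, bounding $\Pi_i\leq F_i^\lambda$ levelwise gives $J_\lambda\leq$ the right side of \eqref{eq:layerformula}. Finiteness and nonnegativity of the integrals must be checked separately. For $a\to\infty$, $F_0^\lambda(ya^\gamma,b)=\mathcal R_\delta m_a$ vanishes once $ya^\gamma$ is large, since $m_a$ is supported in $z\leq A_0$. A bound of the form $C\,\E\int e^{-\delta t}\chi_{\{Z_t\leq A_0\}}$ then gives integrability against $a^{-\gamma}$ at infinity, using $\gamma>1$. For $a\to0$, $F_1^\lambda-g=\mathcal R_\delta m_d\leq\mathcal R_\delta(z\chi_{\{z\geq B_\lambda\}})$, which is small in $L^1(a^{-\gamma}da)$ near zero because $z=ya^\gamma$. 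I would make this quantitative via a Markov/moment bound on $\mathcal R_\delta(z\chi_{z\geq B})$ of order $z^{p}$ with $p>1$ in small $z$. This is the step I expect to be the main obstacle: the integrability near $a=0$ needs a superlinear small-$z$ decay of the lower reaction potential. I would obtain it from Gaussian moment bounds on $\log Z$ together with the Girsanov measure $Q^b$.

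For attainment, nest the threshold rules. Define $c^*_t$ by $u(c^*_t)$ reflected between the moving barriers given by $ya^\gamma=a_\lambda(b_t)$ and $ya^\gamma=d_\lambda(b_t)$. Concretely, $c^*$ is the minimal process keeping $Y_t(c^*_t)^\gamma\in[a_\lambda(b_t),d_\lambda(b_t)]$, constructed by Skorokhod reflection in $\log c$ with Lipschitz barriers \eqref{eq:logLip}. Monotonicity of the thresholds in the state ensures that the indicators $\chi_{\{c^*_t>a\}}$ are exactly the optimal switching rules of \cref{prop:binarygame} at every level. Equality then holds levelwise, which also proves optimality and admissibility via \cref{sec:pricingappendix}.

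Finally, \eqref{eq:marginalidentity} follows by differentiating \eqref{eq:layerformula} in $c$:
\[
 c^{-\gamma}-yc^{... }
\]
more precisely, the derivative is $c^{-\gamma}/\delta-y/r-c^{-\gamma}F_0^\lambda+c^{-\gamma}(F_1^\lambda-g)$ evaluated at $(yc^\gamma,b)$. Since $c^{-\gamma}g(yc^\gamma)=c^{-\gamma}/\delta-y/r$, this equals $c^{-\gamma}(F_1^\lambda-F_0^\lambda)=c^{-\gamma}v_\lambda$. Continuity of the integrands, from local Lipschitz continuity of $F_i^\lambda$, gives $C^1$ dependence on $c$.
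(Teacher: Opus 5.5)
Your proposal follows the paper's proof: the level decomposition with $I^a_t=\chi_{\{c_t>a\}}$ and the coarea formula for the variations, levelwise domination by the switching values of \cref{prop:binarygame}, nesting of the threshold rules (ordered because $a\mapsto Y_ta^\gamma$ is increasing), and differentiation at the moving endpoints, which you carry out correctly. The finiteness paragraph, however, contains one false statement and puts the difficulty in the wrong place.

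First, $F_0^\lambda(z,b)=\mathcal R_\delta m_a(z,b)$ does \emph{not} vanish for large $z$. It is a discounted occupation of the upper contact set, which $Z$ started from a large $z$ still reaches with positive probability, so it only decays. All you need is $0\le F_0^\lambda\le K$ from \eqref{eq:binarybounds} together with $\gamma>1$, which gives \eqref{eq:upperlayerbound}.

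Second, the step you call the main obstacle is immediate. Since $m_d\le z$ (equivalently, since $F_1^\lambda\le 1/\delta$), we have $0\le F_1^\lambda-g\le\mathcal R_\delta z=z/r$. Hence $a^{-\gamma}[F_1^\lambda-g](ya^\gamma,b)\le y/r$, which is a bounded integrand on $(0,c)$. A linear bound already suffices because $a^{-\gamma}\cdot ya^\gamma=y$. Integrability near $a=0$ only needs $O(z^p)$ with $p>1-1/\gamma$. So no superlinear decay is required, and no Gaussian or Girsanov moment estimate.

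On admissibility of the nested policy, the paper truncates the levels to $[\varepsilon,R]$ and bounds the truncated payoffs below by that of constant consumption. It then applies Fatou's lemma to the nonnegative negative-utility, expenditure and cost terms. Your route through levelwise equality and Tonelli also works, but you should state the bounds it rests on. Citing \cref{sec:pricingappendix} is not enough, since that appendix only turns finiteness into \eqref{eq:utilityTV}. The bounds come from the levelwise identities for the threshold rule together with $F_0^\lambda\ge0$ and $F_1^\lambda\ge g$:
\begin{itemize}
\item $\mathrm{cost}_a+\E\int_0^\infty e^{-\delta t}Z^a_tI^a_t\,dt\le 1/\delta$ for $a>c$;
\item $\mathrm{cost}_a+\E\int_0^\infty e^{-\delta t}(1-I^a_t)\,dt\le ya^\gamma/r$ for $a<c$.
\end{itemize}
Integrating these against $a^{-\gamma}da$ bounds the expected adjustment costs, the present value of consumption and the expected discounted negative utility, by the same linear bounds once more.

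Finally, the bounded variation of the regulator comes from the continuity of the barriers and the gap \eqref{eq:uniformgap}, not from \eqref{eq:logLip}.
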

\begin{proof}
We compare every consumption path with the policy that keeps $c$ forever. Writing $I_t^a=\chi_{\{c_t>a\}}$,
\begin{align*}
 u(c_t)-u(c)
 &=\int_c^\infty a^{-\gamma}I_t^ada
   -\int_0^c a^{-\gamma}(1-I_t^a)da,\\
 c_t-c&=\int_c^\infty I_t^ada-\int_0^c(1-I_t^a)da.
\end{align*}
By the coarea formula for paths of bounded variation, the positive and negative variations of utility are the integrals of the upward and downward crossings of the levels $a$, weighted by $a^{-\gamma}$. This includes jumps, since a jump crosses every level between its endpoints.

For $a>c$ the layer starts switched off, and its optimized gain over the benchmark is $F_0^\lambda(ya^\gamma,b)$. For $a<c$ it starts switched on, its payoff when it is never switched off is $g(ya^\gamma)$, and the optimized gain is $F_1^\lambda-g$. Optimizing each layer separately gives the right side of \eqref{eq:layerformula} as an upper bound, where the interchange is first made on a finite horizon with levels in $[\varepsilon,R]$ and then extended using admissibility. Finiteness follows from \eqref{eq:binarybounds}:
\begin{align}
 \int_c^\infty a^{-\gamma}F_0^\lambda(ya^\gamma,b)da
 &\leq\frac{Kc^{1-\gamma}}{\gamma-1},\label{eq:upperlayerbound}\\
 0\leq a^{-\gamma}[F_1^\lambda(ya^\gamma,b)-g(ya^\gamma)]
 &\leq\frac yr,\quad 0<a<c.\label{eq:lowerlayerbound}
\end{align}

To attain the bound we drive every layer by the same Brownian path. A layer is switched on when $Y_ta^\gamma\leq a_\lambda(b_t)$ and off when $Y_ta^\gamma\geq d_\lambda(b_t)$. If $a_1<a_2$, then $Y_ta_1^\gamma<Y_ta_2^\gamma$ at all times, so when the higher level switches on the lower one is already on or switches on at the same time, and when the lower level switches off the higher one is already off or switches off at the same time. Hence the rules preserve the ordering $I_t^{a_1}\geq I_t^{a_2}$, and they are of the form $I_t^a=\chi_{\{c_t>a\}}$ for a single consumption process.

On a finite time interval $Y$ stays positive and finite and the factor path has compact range, so the local bounds in \cref{thm:boundaries} keep consumption positive and finite. The construction in \cref{sec:policy} gives a continuous version of locally bounded variation after the initial projection. Restricting the levels to $[\varepsilon,R]$ gives admissible policies which attain the truncated layer integrals, and their dual payoffs are bounded below by that of constant consumption. After a change of sign, negative utility, expenditure and adjustment costs are all nonnegative, so Fatou's lemma shows that the limiting policy has finite expected total cost and a payoff at least the limit of the truncated payoffs. The upper bound gives equality, and the integrability of the weighted variation gives \eqref{eq:utilityTV} by integration by parts.

Finally, differentiating \eqref{eq:layerformula} only at the moving endpoints of integration, which is justified by the continuity of $F_i^\lambda$, gives $(J_\lambda)_c=c^{-\gamma}[g-F_0^\lambda+F_1^\lambda-g](yc^\gamma,b)$, which is \eqref{eq:marginalidentity}.
\end{proof}

\subsection{The consumption regulator and the wealth map}\label{sec:policy}
For a fixed initial shadow price $y$, define the barriers
\begin{equation}
 \underline c_t=\left(\frac{a_\lambda(b_t)}{Y_t}\right)^{\frac1\gamma},\qquad
 \overline c_t=\left(\frac{d_\lambda(b_t)}{Y_t}\right)^{\frac1\gamma}.
 \label{eq:consbarriers}
\end{equation}
The optimal consumption process is the minimal regulator of the constant input $c$ in the interval $[\underline c_t,\overline c_t]$. At time zero it is projected onto $[\underline c_0,\overline c_0]$, and afterwards it increases only when the lower barrier pushes it up and decreases only when the upper barrier pushes it down. In logarithms,
\begin{equation}
 \log c_t=\log c+\Lambda_t^+-\Lambda_t^-,\qquad
 \log \underline c_t\leq\log c_t\leq\log \overline c_t,\label{eq:reflection}
\end{equation}
where the nondecreasing processes $\Lambda^\pm$, which include the initial jump, increase only when $c_t$ is at the corresponding barrier.

The running maximum of $\underline c$ and the running minimum of $\overline c$ cannot be computed separately, because the two interventions interact. A constructive rule alternates between them. Consumption follows the running maximum of the lower barrier until the upper barrier forces a reduction, then the running minimum of the upper barrier until the lower barrier forces an increase, and so on. The barriers are continuous, and by \cref{prop:reduction} their logarithmic distance satisfies
\begin{equation}
 \log \overline c_t-\log \underline c_t\geq\frac1\gamma\log\mathrm{ILA}>0.
 \label{eq:uniformgap}
\end{equation}
On a compact time interval the barriers are bounded and uniformly continuous, and switching from an increasing to a decreasing phase requires a change at least as large as this distance, so there is a positive minimal time between opposite phases. There are therefore finitely many phases, each monotone and bounded, and the regulator is continuous after the initial projection and locally of bounded variation. It agrees layer by layer with the switching rules, and it is the Skorokhod map of \cite{BKR} for a moving interval, applied to a constant input.

\paragraph{The wealth map}
Let
\begin{equation}
 p=\frac{\gamma-1}{\gamma}\in(0,1),\qquad z=yc^\gamma.
\end{equation}
The layer representation gives $J_\lambda(y,c,b)=c^{1-\gamma}j_\lambda(z,b)$. Define
\begin{align}
 \mathcal I_\lambda(z,b)={}&\int_z^\infty F_0^\lambda(\zeta,b)\zeta^{-p-1}d\zeta\nonumber\\
 &+\int_0^z[F_1^\lambda(\zeta,b)-g(\zeta)]\zeta^{-p-1}d\zeta.
 \label{eq:Tmap}
\end{align}
Then
\begin{equation}
 j_\lambda(z,b)=\frac1{\delta(1-\gamma)}-\frac zr
                 +\frac{z^p}{\gamma}\mathcal I_\lambda(z,b),
 \label{eq:jmap}
\end{equation}
and the bounds \eqref{eq:upperlayerbound} and \eqref{eq:lowerlayerbound} give absolute convergence. Moreover $\partial_z \mathcal I_\lambda=(v_\lambda-g)z^{-p-1}$, so $J_\lambda$ is also continuously differentiable in $y$, with wealth map
\begin{align}
 w_\lambda(z,b):=-\partial_zj_\lambda(z,b)
  =\frac1r-\frac p\gamma z^{p-1}\mathcal I_\lambda(z,b)
         -\frac{v_\lambda(z,b)-g(z)}{\gamma z}.
 \label{eq:wealthmap}
\end{align}
In particular
\begin{equation}
 -(J_\lambda)_y(y,c,b)=c\,w_\lambda(yc^\gamma,b),\qquad
 v_\lambda=(1-\gamma)j_\lambda+\gamma z(j_\lambda)_z.
 \label{eq:homogeneousidentities}
\end{equation}
Integrating \eqref{eq:marginalidentity} alone would leave an integration constant depending on $b$, and \eqref{eq:Tmap}--\eqref{eq:wealthmap} determine it.

Where derivatives are classical, $j_\lambda$ satisfies the gradient constraint equation
\begin{equation}
 \max\left\{(\A-\delta)j+\frac1{1-\gamma}-z,\
       (1-\gamma)j+\gamma zj_z-\alpha,\
       -\lambda-(1-\gamma)j-\gamma zj_z\right\}=0.
 \label{eq:jVI}
\end{equation}
We use the layer representation as the definition of $j_\lambda$, and \eqref{eq:jVI} is a consequence of it.

\subsection{Strong duality}\label{sec:primal}
\begin{theorem}\label{thm:strongduality}
For every $x,c>0$ and $b\in\R$,
\begin{equation}
 V_\lambda(x,c,b)=\min_{y>0}\{J_\lambda(y,c,b)+yx\}.
 \label{eq:strongduality}
\end{equation}
The minimum is attained at an interior point $y^*$. The consumption process of \cref{thm:layers} with this multiplier exhausts the budget and is optimal for the primal problem, and its financing wealth is \eqref{eq:financingwealth}. The optimal consumption and the financing portfolio are unique up to null sets.
\end{theorem}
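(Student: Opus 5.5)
The plan follows the usual duality for complete markets: weak duality gives the inequality $\leq$, and the layer optimizer with a budget-exhausting multiplier attains it.

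\emph{Weak duality.} Take any admissible triple $(c_\cdot,\pi_\cdot,X)$ starting from $x$ and any $y>0$. \Cref{prop:budget} gives $\E\int_0^\infty H_tc_tdt\leq x$. Since $Y_t=ye^{\delta t}H_t$, this reads $\E\int_0^\infty e^{-\delta t}Y_tc_tdt\leq yx$. Therefore
\[
 \calU_{\alpha,\lambda}(c_\cdot)\leq\calU_{\alpha,\lambda}(c_\cdot)-\E\int_0^\infty e^{-\delta t}Y_tc_tdt+yx\leq J_\lambda(y,c,b)+yx,
\]
and so $V_\lambda\leq\inf_{y>0}\{J_\lambda+yx\}$.

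\emph{Attaining the bound.} By \eqref{eq:homogeneousidentities}, $y\mapsto J_\lambda(y,c,b)$ is $C^1$ with $-(J_\lambda)_y=c\,w_\lambda(yc^\gamma,b)$. It is convex, being a supremum of affine functions of $y$. So it suffices to find $y^*$ with $c\,w_\lambda(y^*c^\gamma,b)=x$, which is the first order condition for the minimization.

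\emph{Boundary behaviour of the wealth map.} I would show that $z\mapsto w_\lambda(z,b)$ is continuous and nonincreasing, with $w_\lambda\to\infty$ as $z\searrow0$ and $w_\lambda\to0$ as $z\to\infty$.
\begin{itemize}[leftmargin=*]
 \item As $z\searrow0$: on $z\leq a_\lambda(b)$ we have $v_\lambda=\alpha$, so the term $-(v_\lambda-g)/(\gamma z)=(1/\delta-\alpha-z/r)/(\gamma z)\sim K/(\gamma z)$ blows up, since $K>0$. The integral term stays controlled by \eqref{eq:upperlayerbound}.
 \item As $z\to\infty$: \eqref{eq:upperlayerbound}--\eqref{eq:lowerlayerbound} give $j_\lambda\to\frac{1}{\delta(1-\gamma)}$-type bounds after rescaling. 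Equivalently, the dual value of a very expensive multiplier corresponds to the cheapest consumption.
\end{itemize}
It is cleaner to argue directly in $y$. For the lower limit, $J_\lambda(y)+yx\geq(\text{constant-}c\text{ payoff})-yc/r+yx$. For the upper limit, \eqref{eq:layerformula} with the bounds shows $J_\lambda(y)\to\infty$ as $y\searrow0$, because the $F_0$ integral $\int_c^\infty a^{-\gamma}F_0(ya^\gamma)da$ behaves like $Ky^{p}\cdot$const after scaling. Together these give coercivity: $J_\lambda+yx\to\infty$ at both ends when $x>0$. Here $J_\lambda\geq -yc/r+u(c)/\delta$ handles $y\to\infty$ only if $x$ exceeds something; otherwise one uses $J_\lambda\geq$ the payoff of a policy that immediately cuts consumption to a tiny level, which costs at most $\lambda$-weighted utility. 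This coercivity, especially as $y\to\infty$ with the reduction-cost structure, is the main obstacle. I would handle it by comparing with the policy ``reduce to $\epsilon$ at time zero'', whose dual payoff is $\geq u(\epsilon)/\delta-\lambda(u(c)-u(\epsilon))-y\epsilon/r$, making $J_\lambda+yx\to\infty$. Hence an interior minimizer $y^*$ exists, and at it $(J_\lambda)_y(y^*)=-x$.

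\emph{Optimality and uniqueness.} Let $c^*_\cdot$ be the nested switching policy of \cref{thm:layers} for $y^*$. I would differentiate the layer formula, or use an envelope argument with the optimal policy for $y$ near $y^*$, to get $\E\int_0^\infty H_tc^*_tdt=-(J_\lambda)_y(y^*)=x$. This needs a one-sided envelope inequality in each direction together with differentiability. Then the weak-duality chain holds with equality, \cref{prop:budget} supplies the financing wealth \eqref{eq:financingwealth} and the portfolio, and $c^*$ is primal optimal. For uniqueness, two optimal consumptions would both attain $J_\lambda(y^*)$. Strict concavity of $q\mapsto q-yC(q)$ in utility units (\eqref{eq:inverseutility}), together with the convexity of the cost terms in $dq$, makes their average strictly better unless they agree a.e. Uniqueness of the portfolio then follows from martingale representation.
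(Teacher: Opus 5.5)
There is a genuine gap at the endpoint $y\searrow0$, and this is exactly where interiority of the minimizer is decided.

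Your claim that $J_\lambda(y)\to\infty$ as $y\searrow0$ is false. By \eqref{eq:normalization}, every admissible dual payoff is at most $\alpha u(c)\le0$. The bounds you cite, \eqref{eq:upperlayerbound} and \eqref{eq:lowerlayerbound}, give $J_\lambda(y,c,b)\le u(c)/\delta+Kc^{1-\gamma}/(\gamma-1)=\alpha u(c)$ for every $y$. The constant policy $y^{-1/\gamma}$ then shows $J_\lambda(y,c,b)\to\alpha u(c)$. So $J_\lambda+yx$ tends to a finite value at $0$, and coercivity fails there. A convex function with this behaviour can have its infimum only in the limit $y\searrow0$, and then no multiplier exhausts the budget.

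Your first route through the wealth map has the same hole. On $\{z\le a_\lambda(b)\}$ the integral term $-\frac p\gamma z^{p-1}\mathcal I_\lambda(z,b)$ is not of lower order than $K/(\gamma z)$. There $F_0^\lambda=K-\zeta/r+\mathcal R_\delta m_d$ with $0\le\mathcal R_\delta m_d\le\zeta/r$, so $F_0^\lambda=K+O(\zeta)$ near $\zeta=0$. Hence the integral term equals $-K/(\gamma z)+O(z^{-1/\gamma})$ and cancels your leading term exactly. The blow-up of $w_\lambda$ therefore comes only from a term of order $z^{-1/\gamma}$, whose sign you would still have to establish.

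What you need is the strict inequality $\inf_{y>0}\{J_\lambda(y,c,b)+yx\}<\alpha u(c)$, or equivalently $-(J_\lambda)_y(y,c,b)\to\infty$ as $y\searrow0$. The paper obtains it from the normalization and the pointwise conjugate of $A_0u$. This gives \eqref{eq:Jupper}, that is $J_\lambda\le\alpha u(c)-Cy^p$ with $C>0$ and $p<1$, so $J_\lambda+yx<\alpha u(c)$ for small $y$. A route closer to your first-order approach is to use $c^y_t\ge\underline c_t$ from \eqref{eq:consbarriers}, which gives
\[
 \E\int_0^\infty H_tc^y_tdt\geq y^{-1/\gamma}\,\E\int_0^\infty a_\lambda(b_t)^{1/\gamma}e^{-\delta t/\gamma}H_t^{p}dt\longrightarrow\infty .
\]

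With this repaired, the rest of your plan works:
\begin{itemize}
\item weak duality;
\item the comparison, as $y\to\infty$, with an immediate reduction to a fixed level $\epsilon<\min\{c,rx\}$ (the restriction $\epsilon<rx$ is needed);
\item the subgradient identification of the budget with $-(J_\lambda)_y$;
\item uniqueness via strict concavity of the dual objective in utility units.
\end{itemize}
The uniqueness argument holds because the Jordan variations of an average are dominated by the averages of the variations. It is a valid alternative to the paper's primal rescaling $q\mapsto\theta q$.
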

\begin{proof}
For any budget feasible consumption stream, adding $y$ times the budget slack gives $\calU(c_\cdot)\leq J_\lambda(y,c,b)+yx$, which is weak duality.

We next show that the minimum over $y$ is attained at an interior point. Let
\begin{equation}
 \mathcal H_p(b)=\E_b\int_0^\infty e^{-\delta t}
                      (e^{\delta t}H_t)^pdt.
 \label{eq:Kmoment}
\end{equation}
Since $\E M_t^p\leq1$,
\begin{equation}
 0<\mathcal H_p(b)\leq\frac1{\delta-p(\delta-r)}<\infty.
 \label{eq:Kbound}
\end{equation}
The normalization \eqref{eq:normalization} and the pointwise conjugate of $A_0 u$ give
\begin{equation}
 J_\lambda(y,c,b)\leq\alpha u(c)
 -\frac{\gamma A_0^{\frac1\gamma}}{\gamma-1}\mathcal H_p(b)y^p.
 \label{eq:Jupper}
\end{equation}
The constant consumption $y^{-1/\gamma}$, with the corresponding initial adjustment cost, shows that $J_\lambda(y,c,b)\to\alpha u(c)$ as $y\searrow0$. Because $p<1$, \eqref{eq:Jupper} implies $J_\lambda(y,c,b)+yx<\alpha u(c)$ for some small $y>0$, so the endpoint zero is not a minimizer. For $y\geq c^{-\gamma}$ the same policy is an initial reduction and gives
\begin{equation}
 J_\lambda(y,c,b)\geq
 -\left(\frac{1+\delta\lambda}{\delta(\gamma-1)}+\frac1r\right)y^p
 -\lambda u(c).
 \label{eq:Jlowerlarge}
\end{equation}
Thus $J_\lambda+yx\to\infty$ as $y\to\infty$. Since $J_\lambda$ is a supremum of affine functions of $y$, it is convex, and with continuity this gives an interior minimizer.

Let $c^y$ be the dual optimizer of \cref{thm:layers} and $\mathcal B(y)=\E\int_0^\infty H_tc_t^ydt$. For every $y'>0$, $J_\lambda(y',c,b)\geq J_\lambda(y,c,b)-(y'-y)\mathcal B(y)$, so $-\mathcal B(y)$ is a subgradient, and differentiability in $y$ gives $\mathcal B(y)=-(J_\lambda)_y(y,c,b)$. At an interior minimizer $\mathcal B(y^*)=x$. By \cref{prop:budget} the consumption stream can be financed with nonnegative wealth, and its primal utility is $J_\lambda(y^*,c,b)+y^*x$, so weak duality holds with equality.

It remains to prove uniqueness. In utility units the budget functional is convex, because $C(q)$ is strictly convex. If two different optimal utility paths exhausted the budget, their average would have at least the same utility and a strictly smaller cost. Replace its $q_t$ by $\theta q_t$ for $t\geq0$ and some $\theta\in(0,1)$, keeping $q_{0-}$ fixed. This raises consumption, multiplies the downward variation after zero by $\theta$, and makes the initial downward variation at most $\theta$ times its original value, so \eqref{eq:normalization} shows a strict gain in utility, since the part after $\alpha u(c)$ is strictly negative. For $\theta$ close to one this uses only the budget slack, a contradiction. Uniqueness of the portfolio follows from martingale representation.
\end{proof}

The initial multiplier solves the scalar budget equation
\begin{equation}
 w_\lambda(y^*c^\gamma,b)=\frac xc.\label{eq:budgetroot}
\end{equation}
Convexity of $J_\lambda$ makes $w_\lambda$ nonincreasing in its first argument, and a generalized inverse may be used if it has a flat piece. At every state $w_\lambda>0$, since it is the present value of a positive consumption stream divided by current consumption.

Define the wealth thresholds
\begin{equation}
 w_\uparrow(b)=w_\lambda(a_\lambda(b),b),\qquad
 w_\downarrow(b)=w_\lambda(d_\lambda(b),b).
 \label{eq:wealththresholds}
\end{equation}
The ordering is reversed, since a low $z$ means a low shadow price of wealth and therefore a high wealth--consumption ratio. If $x/c>w_\uparrow(b)$, consumption jumps up to $x/w_\uparrow(b)$, and if $x/c<w_\downarrow(b)$, it jumps down to $x/w_\downarrow(b)$. In between, no immediate adjustment takes place. Unlike the constant ratios of \cite{CJK}, these thresholds depend on the risk premium.

\subsection{Wealth and portfolio}\label{sec:portfolio}
Let
\begin{equation}
 \mathcal N_t=\E_t\int_0^\infty H_sc_s^*ds,
 \qquad d\mathcal N_t=\psi_tdW_t.
\end{equation}
The optimal wealth and dollar investment are
\begin{equation}
 X_t^*=\frac{\mathcal N_t-\int_0^t H_sc_s^*ds}{H_t},\qquad
 \pi_t^*=\frac1\sigma\left(\frac{\psi_t}{H_t}+\frac{b_t}{\sigma}X_t^*\right),
 \label{eq:portfolioMR}
\end{equation}
and the cash position is $X_t^*-\pi_t^*$. The conditional version of the envelope argument gives
\begin{equation}
 X_t^*=-(J_\lambda)_y(Y_t,c_t^*,b_t)
 \label{eq:markovwealth}
\end{equation}
at all times. An optimal instantaneous adjustment of consumption leaves the derivative in $y$ unchanged, because the adjustment cost does not depend on $y$ and the two action values coincide. Utility adjustment therefore does not inject cash into wealth.

Where $J_y$ has the required derivatives, matching the diffusion coefficient of \eqref{eq:markovwealth} with that of \eqref{eq:wealth} gives
\begin{equation}
 \pi^*=\frac b{\sigma^2}YJ_{yy}+\frac\eta\sigma J_{yb}.
 \label{eq:portfoliofeedback}
\end{equation}
The sign of the second term comes from the negative factor shock in \eqref{eq:factor}, and this term would be absent if the risk premium were constant. In normalized coordinates,
\begin{equation}
 \frac{\pi^*}{X^*}
 =\frac{b}{\gamma\sigma^2}
  +\frac{b}{\gamma\sigma^2}\frac{v_z}{w}
  +\frac\eta\sigma\frac{j_{zb}}{w},
 \label{eq:hedgedecomposition}
\end{equation}
because $v_z=j_z+\gamma zj_{zz}=-w+\gamma zj_{zz}$. The middle term is the modification of the CRRA position caused by the adjustment costs, and it is nonpositive for $b>0$. The last term hedges changes in the investment opportunity set and has no prescribed sign.

At a regular contact point $v_z=0$, while $j_{zb}$ need not vanish, so the risky share at a consumption threshold need not equal $b/(\gamma\sigma^2)$. The implied risk aversion $bX/(\sigma^2\pi)$ computed from the total portfolio therefore mixes consumption rigidity with intertemporal hedging, and the boundary identity and the U-shaped risky share obtained in \cite{CJK} for constant opportunities need not carry over.

\subsection{Ratcheting}\label{sec:ratchet}
Ratcheting prohibits every reduction of consumption, including one at time zero, while the upward utility cost remains $\alpha$.

\begin{proposition}\label{prop:ratchetPDE}
There is a unique continuous solution of
\begin{equation}
 \max\{\delta v-\A v-1+z,\ v-\alpha\}=0\label{eq:ratchetVI}
\end{equation}
in the class $\alpha-z/r\leq v\leq\alpha$, which we denote by $v_\infty$. It has a positive threshold $a_\infty$ with
\begin{equation}
 \{v_\infty=\alpha\}=\{z\leq a_\infty(b)\},\qquad
 0<a_\infty(b)\leq A_0\min\Bigl\{1,\frac r\delta\Bigr\}.\label{eq:ratchetbound}
\end{equation}
The threshold satisfies \eqref{eq:logLip}, and \cref{thm:intrinsic,thm:smoothtails} apply to it. Its option potential is
\begin{align}
 P&=g-v_\infty
 =\mathcal R_\delta\bigl[(A_0-z)\chi_{\{z\leq a_\infty(b)\}}\bigr],
 \quad0\leq P\leq K,\label{eq:putpotential}\\
 P(z,b)&=\sup_\rho\E_{z,b}\left[e^{-\delta\rho}
 \Bigl(K-\frac{Z_\rho}r\Bigr)\chi_{\{\rho<\infty\}}\right],\label{eq:put}
\end{align}
where the supremum includes $\rho=\infty$, and the first hitting time of $\{z\leq a_\infty(b)\}$ is optimal.
\end{proposition}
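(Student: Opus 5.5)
Existence follows the route of \cref{prop:PDEexist} with the upper penalty alone. For $\epsilon>0$ and $T>0$ we solve the mild problem \eqref{eq:mild} with $\beta_\epsilon(q)=(q-\alpha)^+/\epsilon$, smoothed as before. The affine barriers $\alpha+2\epsilon$ from above and $w(t,z)=\alpha-z/r-\alpha e^{-\delta(T-t)}+(z/r)e^{-r(T-t)}$ from below are mild super- and subsolutions, since $\beta_\epsilon(w)=0$. With no lower obstacle these solutions grow linearly in $z$. We therefore pass $T\to\infty$ in $\mathcal C_1$, as in the first paragraph of the proof of \cref{thm:intrinsic}, and then let $\epsilon\searrow0$ using half-relaxed limits. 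The limit $v_\infty$ is a viscosity solution of \eqref{eq:ratchetVI} with $\alpha-z/r\le v_\infty\le\alpha$. \Cref{lem:linearized,lem:sharpoccupation} apply verbatim, since $k_s\ge0$ is all they need. This gives the cone bounds of \cref{thm:cone}, and the proofs of \cref{thm:boundaries} and \cref{thm:intrinsic,thm:smoothtails} then go through without the lower obstacle. In particular the contact set is $\{z\le a_\infty(b)\}$, $a_\infty>0$ by the small-$z$ barrier, $a_\infty\le A_0$ by the subsolution test, and \eqref{eq:logLip} holds.

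\emph{Uniqueness} in the class $\alpha-z/r\le v\le\alpha$ is the main obstacle, because $v$ is unbounded below and the shift argument of \cref{prop:PDEexist} needs bounded differences. Two solutions in the class differ by at most $z/r$. I would first try comparison on $(0,\infty)\times\R$ in $x=\log z$ with the penalization $\theta\,\phi$, where $\phi=z^{\rho}(1+b^2)$ or a similar function satisfying $(\delta-\A)\phi>0$ and $\phi/z\to\infty$ as $z\to\infty$. For $\phi=z^\rho$ one gets $\A z^\rho=\bigl[(\delta-r)\rho+\rho(\rho-1)b^2/(2\sigma^2)\bigr]z^\rho$, which grows in $b$ and causes trouble. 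I therefore expect to need a weight adapted to the $Q$-dynamics: under the pricing measure $b$ reverts at speed $\kappa-\eta/\sigma>0$, which is where $\kappa\sigma>\eta$ enters. Concretely, set $\tilde v=v/z$. This turns the problem into one for the diffusion under the measure $Q^b$ with discount $r$ and bounded sublinear data, and there a bounded difference can be handled by the $e^{-rT}$ shift argument combined with a Lyapunov function in $b$ for the $Q$-mean-reverting factor. The finite-horizon comparison with the barrier $e^{\mu(T-t)}(1+x^2+b^4)$ of \cref{prop:PDEexist} then extends to the whole space.

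\emph{The bound $a_\infty\le A_0 r/\delta$} and the put representation come together. Set $P=g-v_\infty$. Then $0\le P$ because $v_\infty\le\alpha\le 1/\delta$ is not sharp enough. I would instead obtain $P\ge0$ from $P=\mathcal R_\delta[(A_0-z)\chi]$, which follows from the analogue of \cref{lem:globalgenerator}, proved using the tail bound $\alpha e^{-\delta T}+ze^{-rT}/r$ noted there. Since $A_0-z\ge0$ on the contact set, $0\le P\le A_0/\delta=K$. The equation $(\delta-\A)P=(A_0-z)\chi$ together with $P\ge g-\alpha=K-z/r$, with equality on the contact set, identifies $P$ as the value of the optimal stopping problem \eqref{eq:put}, via the same stopped generator inequality used in \cref{prop:binarygame}. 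On the contact set $P=K-z/r$ must satisfy $P\ge0$. Moreover, stopping immediately is dominated by never stopping unless $\delta(K-z/r)\ge\delta K - \ldots$: the generator test $(\delta-\A)(K-z/r)=A_0-\delta z/r\ge0$ at contact points, from the supersolution property of $P$ there, gives $z\le A_0r/\delta$. Optimality of the first hitting time follows from the equality of the stopped identity before hitting together with $e^{-\delta\rho}P\to0$.
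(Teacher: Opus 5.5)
Your proposal is correct. Its last part is the paper's argument: you get $P=\mathcal R_\delta[(A_0-z)\chi_{\{z\le a_\infty(b)\}}]$ from the analogue of \cref{lem:globalgenerator}, hence $0\le P\le K$, then $a_\infty\le rK=A_0r/\delta$ from $P=K-z/r\ge0$ at contact, and you identify the put by supermartingale and optional-sampling arguments.

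Existence and uniqueness are organized differently.

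For existence, the paper does not penalize again. It takes the decreasing limit of $v_\lambda$ as $\lambda\to\infty$: the family is monotone by \cref{prop:costs}, bounded below by $\alpha-z/r$, and equicontinuous by the cone bound, and the lower obstacle is inactive on compacts since $d_\lambda\ge1+\delta\lambda$. This gives $v_\lambda\searrow v_\infty$ and $\lambda$-uniform small-$z$ barriers at no extra cost, and both are reused in \cref{thm:limit}. Your single-penalty construction, which the paper mentions as an alternative, is self-contained. However, closing the half-relaxed limits needs comparison for semicontinuous sub- and supersolutions in the class $\alpha-z/r\le\cdot\le\alpha$. You should state that your uniqueness argument is proved in that generality, or else use the $\epsilon$-uniform Lipschitz bounds together with uniqueness.

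For uniqueness, your shift $e^{-r(T-t)}/r$ applied to $v/z$ is exactly the paper's $\psi=(z/r)e^{-r(T-t)}$ divided by $z$; the difference lies in the barrier. The paper proves comparison whenever $u-w\le C(1+z)$, using $\Theta=1+x^2+b^4+z^{1+\nu}e^{\varrho b^2}$, and the Gaussian weight is where $\kappa\sigma>\eta$ enters. You only need $u-w\le z/r$, which is all that two solutions in the class produce. After division by $z$ the operator is $\A_1$ of \eqref{eq:derivativePDE}, and $\A_1(1+x^2+b^4)\le C(1+x^2+b^4)$; equivalently, $\A[z(1+x^2+b^4)]\le Cz(1+x^2+b^4)$. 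This holds for either sign of $\kappa-\eta/\sigma$, because on a finite horizon the barrier may carry the factor $e^{\mu(T-t)}$. So, contrary to your remark, your route does not use $\kappa\sigma>\eta$ at all, while the paper's barrier in exchange covers the larger class $u-w\le C(1+z)$. Also, the transformed data $1-1/z$ and obstacle $\alpha/z$ are not bounded; only the difference of two solutions is, but that is all the argument uses.

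Delete the sentence beginning with ``Moreover''. Since $\A z=(\delta-r)z$, one has $(\delta-\A)(K-z/r)=A_0-z$, not $A_0-\delta z/r$. The generator test at contact therefore only reproduces $a_\infty\le A_0$. The bound $a_\infty\le A_0r/\delta$ comes from $P\ge0$ on the contact set, as in your preceding sentence.
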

\begin{proof}
By \cref{prop:costs}, $v_\lambda$ decreases in $\lambda$, and it is bounded below by \eqref{eq:lowzbound}. The cone bound gives local equicontinuity, so the limit is locally uniform and continuous. On each compact set the lower obstacle is inactive for large $\lambda$, and stability of viscosity solutions gives \eqref{eq:ratchetVI}. It can also be constructed with the upper penalty in \eqref{eq:penaltyPDE} alone.

Uniqueness in the stated class requires comparison for functions of linear growth in $z$, which the barrier $1+x^2+b^4$ in the proof of \cref{prop:PDEexist} does not provide. With $x=\log z$ put
\[
 \Theta(z,b)=1+x^2+b^4+z^{1+\nu}e^{\varrho b^2},
 \qquad \nu,\varrho>0.
\]
A direct computation gives
\[
 \frac{\A(z^{1+\nu}e^{\varrho b^2})}{z^{1+\nu}e^{\varrho b^2}}
 =-\Bigl(2\varrho\kappa-\frac{\nu(1+\nu)}{2\sigma^2}-\frac{2\varrho(1+\nu)\eta}{\sigma}-2\varrho^2\eta^2\Bigr)b^2
 +2\varrho\kappa\bar b\,b+\varrho\eta^2+(\delta-r)(1+\nu).
\]
Since $\kappa\sigma>\eta$ by \cref{ass:main}, we may fix $0<\varrho<(\kappa-\eta/\sigma)/(2\eta^2)$ and then $\nu>0$ so small that the coefficient of $-b^2$ is positive. Together with $\A(x^2+b^4)\leq C(1+x^2+b^4)$ this gives $\A\Theta\leq\mu_0\Theta$ for some $\mu_0>0$, and $(1+z)/\Theta\to0$ as $|x|+|b|\to\infty$. With the barrier $e^{\mu_0(T-t)}\Theta$, the comparison argument for the finite horizon version of \eqref{eq:ratchetVI} extends from compact sets to the whole state space for an upper semicontinuous subsolution $u$ and a lower semicontinuous supersolution $w$ with $u-w\leq C(1+z)$ \cite[Section~5.D]{CIL}. Now let $v$ and $\widetilde v$ be solutions with $\alpha-z/r\leq v,\widetilde v\leq\alpha$. The function $\psi(t,z)=(z/r)e^{-r(T-t)}$ satisfies $-\psi_t-\A\psi+\delta\psi=0$, so by the monotonicity of \eqref{eq:ratchetVI} the function $\widetilde v+\psi$ is a supersolution of the finite horizon problem on $[0,T]$, with terminal value $\widetilde v+z/r\geq v$. Since $v-\widetilde v-\psi\leq z/r$, comparison gives $v\leq\widetilde v+(z/r)e^{-rT}$. Letting $T\to\infty$ and exchanging $v$ and $\widetilde v$ shows $v=\widetilde v$.

The barriers for small $z$ in \cref{thm:boundaries} are uniform as $\lambda\to\infty$ and give a locally positive threshold. Monotonicity and the test at the upper obstacle give $a_\infty\leq A_0$. All derivative and cone bounds pass to the limit. The argument of \cref{lem:globalgenerator}, without the lower contact reaction, gives \eqref{eq:putpotential}. At upper contact $P=K-a_\infty/r\geq0$, which gives the remaining bound in \eqref{eq:ratchetbound}.

Since $v_\infty\leq\alpha$, the function $P$ dominates the exercise payoff $K-z/r$. Its discounted potential is a supermartingale and its reaction vanishes before contact, so optional sampling verifies \eqref{eq:put} and the optimality of the first contact, with integrability from the class D estimate. The bound $P\leq K$ follows from the reaction potential. Finally $P(z,b)\to0$ as $z\to\infty$, because a positive exercise payoff requires $Z\leq rK$, the first such time tends to infinity pathwise as $z$ increases, and $P\leq K$.
\end{proof}

The option in \eqref{eq:put} is a perpetual American put written on the shadow price process $Z$, discounted at rate $\delta$, with generator \eqref{eq:generator}.

\begin{theorem}\label{thm:ratchetpolicy}
The dual ratcheting value and the optimal consumption at multiplier $y$ are
\begin{align}
 J_\infty(y,c,b)&=\frac{u(c)}\delta-\frac{yc}r
     +\int_c^\infty a^{-\gamma}P(ya^\gamma,b)da,\label{eq:Jratchet}\\
 c_t^y&=\max\left\{c,\ \sup_{0\leq s\leq t}
       \left(\frac{a_\infty(b_s)}{Y_s}\right)^{\frac1\gamma}\right\}.
 \label{eq:ratchetpolicy}
\end{align}
The primal problem is feasible exactly when $x\geq c/r$. If $x>c/r$,
\begin{equation}
 V_\infty(x,c,b)=\min_{y>0}\{J_\infty(y,c,b)+yx\},\label{eq:ratchetduality}
\end{equation}
with an interior minimizer, the policy \eqref{eq:ratchetpolicy} exhausts the budget, and the portfolio is given by \eqref{eq:portfolioMR}. At $x=c/r$ the only feasible policy is $c_t=c$, $X_t=c/r$, $\pi_t=0$, with value $u(c)/\delta$. For $x<c/r$ the feasible set is empty and $V_\infty=-\infty$.
\end{theorem}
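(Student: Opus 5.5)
The proof copies the layer argument of \cref{thm:layers} and the duality argument of \cref{thm:strongduality}, with the single obstacle problem of \cref{prop:ratchetPDE} in place of the double one.

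\emph{Dual value.} Under ratcheting every layer $a<c$ stays switched on forever. Its gain over the benchmark is therefore zero, and the term $\int_0^c a^{-\gamma}[F_1-g]\,da$ disappears. A layer $a>c$ starts switched off and can only be switched on once, at cost $\alpha$. Its optimized gain is
\[
\sup_\rho \E\bigl[e^{-\delta\rho}\bigl(g(Z_\rho)-\alpha\bigr)\bigr],
\]
taken at $z=ya^\gamma$. Since $g-\alpha=K-z/r$, this equals $P(ya^\gamma,b)$ by \eqref{eq:put}.

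\emph{Optimal consumption.} The coarea decomposition of \cref{thm:layers} gives the upper bound \eqref{eq:Jratchet}, and $P\leq K$ gives finiteness as in \eqref{eq:upperlayerbound}. The optimal rule for layer $a$ is the first time $Y_ta^\gamma\leq a_\infty(b_t)$. These rules are nested in $a$, and they produce $c_t=\max\{c,\sup_{s\le t}(a_\infty(b_s)/Y_s)^{1/\gamma}\}$. This path is continuous and nondecreasing after time zero, and it is finite on compact intervals by the local bounds on $a_\infty$. The Fatou argument of \cref{thm:layers} then shows that the truncated policies attain the bound. Differentiating in $c$ gives $c^{-\gamma}v_\infty(yc^\gamma,b)$.

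\emph{Feasibility.} Any ratcheting path satisfies $c_t\geq c$, so by \eqref{eq:linear-moment} its cost is
\[
\E\int_0^\infty H_tc_t\,dt\geq c/r,
\]
and \cref{prop:budget} rules out $x<c/r$. If $x=c/r$, budget equality forces $c_t=c$ almost everywhere; right continuity makes this hold for all $t$, and the financing wealth $X_t=c/r$ has zero volatility, so $\pi=0$. If $x>c/r$, weak duality holds as before. To show the minimum over $y$ is interior, we check the two ends:
\begin{itemize}
\item As $y\searrow0$, use \eqref{eq:Jupper}, which still holds because ratcheting restricts the controls.
\item As $y\to\infty$, we need $J_\infty(y,c,b)+yx\to\infty$. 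We have $J_\infty\geq u(c)/\delta-yc/r$, so $J_\infty+yx\geq u(c)/\delta+y(x-c/r)\to\infty$. This is exactly where $x>c/r$ is used.
\end{itemize}
Convexity and differentiability in $y$ follow from the layer formula, with $\partial_y$ computed under the integral using $P_z\leq0$ and local Lipschitz bounds. The subgradient argument then gives exhaustion of the budget $\mathcal B(y^*)=x$. The portfolio follows from \eqref{eq:portfolioMR}.

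The main obstacle is differentiability of $J_\infty$ in $y$ and continuity of the budget map $\mathcal B(y)$, including its limit $c/r$ as $y\to\infty$, because of the linear growth of $v_\infty$ and the integrability needed for the terminal condition \eqref{eq:utilityTV}. I would handle this with the class D estimate of \cref{sec:pricingappendix} and the bound $0\leq P\leq K$, which dominates the integrand $a^{-\gamma}P$ uniformly in $y$ on compact sets.
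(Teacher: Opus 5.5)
Your proposal is correct and follows the paper's proof essentially step by step. The shared steps are: layers below $c$ frozen at $g$; each higher layer an American put exercised at first contact; nesting of the entry times into the running maximum; the Fatou attainment argument; $c_t\geq c$ for the feasibility threshold; and the lower bound $u(c)/\delta+y(x-c/r)$ for large $y$.

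Two small repairs are needed. First, at the end $y\searrow0$, \eqref{eq:Jupper} is only an upper bound, so you also need $J_\infty(y,c,b)\to\alpha u(c)$. This follows either from the constant policy $y^{-1/\gamma}$ (an upward jump, which ratcheting allows) or from the layer formula with $P(0+,b)=K$. Second, differentiability in $y$ is best obtained by substituting $\zeta=ya^\gamma$ as in \eqref{eq:Tmap}, because the local Lipschitz bound $|P_z|\leq1/r$ is not integrable over $a\in(c,\infty)$.
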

\begin{proof}
A consumption level that is already included cannot be removed, so its value is $g$. Each higher level is included once, at the optimal exercise time in \eqref{eq:put}, and these times increase with the level. The layer argument of \cref{thm:layers} gives \eqref{eq:Jratchet}, and inverting the ordered entry times gives \eqref{eq:ratchetpolicy}. The bound $P\leq K$ makes the layer integral finite for $\gamma>1$. Since the threshold is continuous, the running maximum is continuous after its possible initial jump and locally bounded, hence locally of bounded variation. The Fatou argument used for finite costs verifies admissibility and attainment.

Every feasible stream satisfies $c_t\geq c$, so its present value is at least $c\E\int_0^\infty H_tdt=c/r$, and depositing $c/r$ in the bank finances $c$ forever. At equality the nonnegative variable $\int_0^\infty H_t(c_t-c)dt$ has zero expectation, which forces the constant stream and its bank financing.

For $x>c/r$ the objective in \eqref{eq:ratchetduality} is at least $u(c)/\delta+(x-c/r)y$, so it diverges as $y\to\infty$. The argument for small $y$ in \cref{thm:strongduality} applies without change, so an interior multiplier exists. Differentiability in $y$ follows from the layer formula, and the envelope and budget argument gives budget exhaustion and equality in \eqref{eq:ratchetduality}. Martingale representation gives the portfolio.
\end{proof}

The upward cost does not affect the primal ratcheting policy. When $dq^-=0$, the normalization becomes
\begin{equation}
 \calU_{\alpha,\infty}=
 \alpha u(c)+(1-\delta\alpha)\E\int_0^\infty e^{-\delta t}u(c_t)dt.
 \label{eq:ratchetalpha}
\end{equation}
In dual coordinates $a_\infty^\alpha=(1-\delta\alpha)a_\infty^0$, which is the case $\lambda=\infty$ of \cref{prop:reduction}, and the optimal multiplier is rescaled by the same factor, so the ratio in \eqref{eq:ratchetpolicy} does not change.

\subsection{The limit of infinite reduction cost}\label{sec:limit}
\begin{theorem}\label{thm:limit}
As $\lambda\to\infty$,
\begin{align}
 v_\lambda&\searrow v_\infty,&
 F_0^\lambda&\searrow P,&F_1^\lambda&\searrow g,&
 J_\lambda&\searrow J_\infty,\label{eq:Flimit}\\
 a_\lambda(b)&\searrow a_\infty(b),&&
 d_\lambda(b)\nearrow\infty.\label{eq:boundlimit}
\end{align}
The convergence of the marginal values and of the increase thresholds is locally uniform, and $d_\lambda(b)\geq1+\delta\lambda$ for all $b$. In adapted coordinates,
\begin{equation}
 (U_\lambda)_b\longrightarrow(U_\infty)_b
 \quad\text{locally uniformly}.\label{eq:directionalconvergence}
\end{equation}
For every $x,c>0$,
\begin{equation}
 V_\lambda(x,c,b)\searrow
 \begin{cases}V_\infty(x,c,b),&x\geq\frac cr,\\
 -\infty,&x<\frac cr.
 \end{cases}\label{eq:Vlimit}
\end{equation}
Fix a multiplier $y>0$ and drive all controls by the same state-price process and factor. Then for every $T<\infty$
\begin{equation}
 \sup_{0\leq t\leq T}|c_t^{\lambda,y}-c_t^{\infty,y}|
 \longrightarrow0\quad\text{almost surely}.\label{eq:controlconvergence}
\end{equation}
\end{theorem}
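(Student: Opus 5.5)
The plan is to work through the chain of objects in the same order they were built: marginal values, then thresholds, then potentials, then dual values, then primal values, then controls.

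\emph{Marginal values.} By \cref{prop:costs}, $v_\lambda$ is nonincreasing in $\lambda$ and bounded below by $\alpha-z/r$. By \cref{thm:cone}, the family is locally equi-Lipschitz with constants independent of $\lambda$. Hence $v_\lambda$ converges locally uniformly to a continuous limit $\tilde v$. On a compact set, $d_\lambda\geq B_\lambda=1+\delta\lambda$ (from \eqref{eq:boundarybounds}) eventually exceeds the range of $z$, so the lower obstacle is inactive there. Viscosity stability then shows that $\tilde v$ solves \eqref{eq:ratchetVI} in the class $\alpha-z/r\leq\tilde v\leq\alpha$, and the uniqueness in \cref{prop:ratchetPDE} gives $\tilde v=v_\infty$.

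\emph{Thresholds.} By \cref{prop:costs}, $a_\lambda$ decreases, and since $v_\lambda\geq v_\infty$ we have $a_\lambda\geq a_\infty$. Let $a^*=\lim a_\lambda$. Passing to the limit in $v_\lambda(a_\lambda(b),b)=\alpha$ gives $v_\infty(a^*,b)=\alpha$, so $a^*\leq a_\infty$ and hence $a^*=a_\infty$. Pointwise monotone convergence of functions that are uniformly log-Lipschitz by \eqref{eq:logLip}, toward a continuous limit, is locally uniform by Dini's theorem. The divergence $d_\lambda\nearrow\infty$ is immediate from $d_\lambda\geq B_\lambda$.

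\emph{Convergence \eqref{eq:directionalconvergence}.} The bounds \eqref{eq:intrinsic} and \eqref{eq:Ubholder} are uniform in $\lambda$. Arzel\`a--Ascoli therefore makes $(U_\lambda)_b$ precompact in the locally uniform topology, and each limit point must equal $(U_\infty)_b$ because $U_\lambda\to U_\infty$ uniformly. So the whole family converges.

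\emph{Potentials.} The bound $F_1^\lambda=g+\mathcal R_\delta m_d^\lambda$ gives $0\leq F_1^\lambda-g\leq\mathcal R_\delta[z\chi_{\{z\geq B_\lambda\}}]$. This tends to $0$ by dominated convergence using \eqref{eq:linear-moment}, and it is monotone. Then $F_0^\lambda=F_1^\lambda-v_\lambda\to g-v_\infty=P$. Monotonicity of $F_i^\lambda$ in $\lambda$ comes directly from the switching representation \eqref{eq:binary}, since every payoff there is nonincreasing in $\lambda$. For the dual values, insert these limits into the layer formula \eqref{eq:layerformula} and pass to the limit by monotone convergence, using the dominating bounds \eqref{eq:upperlayerbound} and \eqref{eq:lowerlayerbound}. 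The result matches \eqref{eq:Jratchet}.

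\emph{Primal values.} $V_\lambda$ decreases in $\lambda$ because the criterion does. By \cref{thm:strongduality}, $V_\lambda=\min_y(J_\lambda+yx)$, and its limit is $\inf_y\inf_\lambda(J_\lambda+yx)=\inf_y(J_\infty+yx)$, since the infima can be exchanged.
\begin{itemize}
\item For $x>c/r$, \cref{thm:ratchetpolicy} identifies this with $V_\infty$.
\item For $x<c/r$, $J_\infty+yx\leq u(c)/\delta+y(x-c/r)$ because $P\leq K$ makes the integral bounded in $y$ (indeed $\int_c^\infty a^{-\gamma}P\,da$ stays bounded). This tends to $-\infty$ as $y\to\infty$.
\item At $x=c/r$, the same bound gives an infimum of at most $u(c)/\delta$. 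For the reverse inequality I would use $V_\lambda\geq$ the value of constant consumption $c$, which equals $u(c)/\delta$.
\end{itemize}

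\emph{Controls.} For fixed $y$, pathwise convergence \eqref{eq:controlconvergence} follows from the regulator construction of \cref{sec:policy}. On $[0,T]$ the path $(Y,b)$ has compact range, so for large $\lambda$ the upper barrier $\overline c_t$ exceeds every level reached by the lower-barrier running maximum; it even exceeds $c$ at time zero. Consequently no downward phase occurs. The regulator is then $\max\{c,\sup_{s\leq t}\underline c^\lambda_s\}$, and the local uniform convergence $a_\lambda\to a_\infty$ gives uniform convergence to \eqref{eq:ratchetpolicy}.

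The main obstacle is this last step together with the boundary case $x=c/r$. One must check that $\overline c^\lambda_t\to\infty$ uniformly on $[0,T]$ pathwise. I would obtain this from $d_\lambda\geq1+\delta\lambda$ together with $\sup_{t\leq T}Y_t<\infty$.
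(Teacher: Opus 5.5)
Your proposal follows the paper's proof in order and in substance. It uses the cone bound for equicontinuity, $d_\lambda\geq B_\lambda$ to deactivate the lower obstacle, dominated convergence for $\mathcal R_\delta[z\chi_{\{z\geq B_\lambda\}}]$, exchange of the two infima, and escape of the upper barrier so that the regulator becomes a running maximum. Your small variations are fine. You identify $\lim a_\lambda$ by passing to the limit in $v_\lambda(a_\lambda(b),b)=\alpha$, and you get $F_0^\lambda\to P$ from $F_0^\lambda=F_1^\lambda-v_\lambda$ rather than from the decreasing reactions $m_a^\lambda$.

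There is, however, a gap in the boundary case $x=c/r$ of \eqref{eq:Vlimit}. By \eqref{eq:Jratchet},
\[
 J_\infty(y,c,b)+yx=\frac{u(c)}\delta+y\Bigl(x-\frac cr\Bigr)+\int_c^\infty a^{-\gamma}P(ya^\gamma,b)\,da .
\]
The option integral is nonnegative, so your inequality $J_\infty+yx\leq u(c)/\delta+y(x-c/r)$ points the wrong way. What $P\leq K$ actually gives is $J_\infty+yx\leq u(c)/\delta+y(x-c/r)+Kc^{1-\gamma}/(\gamma-1)$. This still settles $x<c/r$.

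At $x=c/r$, however, it only yields $\lim_\lambda V_\lambda\leq u(c)/\delta+Kc^{1-\gamma}/(\gamma-1)$. The required bound is $\lim_\lambda V_\lambda\leq u(c)/\delta=V_\infty(c/r,c,b)$. Your constant-consumption argument supplies only the lower bound, which also follows directly from the nonnegativity of the integral.

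The missing ingredient is that the option integral vanishes as $y\to\infty$. Boundedness of $P$ alone cannot give this. You need two facts:
\begin{enumerate}
\item $P(z,b)\to0$ as $z\to\infty$. This is proved at the end of \cref{prop:ratchetPDE}: a positive exercise payoff requires $Z\leq rK$, and the first such time tends to infinity as $z$ increases.
\item Dominated convergence with the integrable majorant $Ka^{-\gamma}$ on $(c,\infty)$.
\end{enumerate}
With that step added, your argument is complete and is essentially the paper's.
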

\begin{proof}
The first limit was obtained in the proof of \cref{prop:ratchetPDE}. Since $v_\lambda\geq v_\infty$ and both are bounded by $\alpha$, every upper contact point of the ratcheting problem is an upper contact point for every finite cost, so $a_\lambda\geq a_\infty$. If the decreasing limit of $a_\lambda(b)$ were larger than $a_\infty(b)$, a point strictly between the two would satisfy $v_\lambda=\alpha$ for all large $\lambda$ but $v_\infty<\alpha$, a contradiction. The common Lipschitz bound then gives locally uniform convergence of the thresholds, and the bound for $d_\lambda$ is \eqref{eq:boundarybounds}.

The upper reaction $m_a$ decreases pointwise to its ratcheting counterpart, and the lower reaction decreases and is bounded by $z\chi_{\{z\geq B_\lambda\}}$. Dominated convergence in the resolvents, with \eqref{eq:linear-moment}, gives $F_0^\lambda\searrow P$ and $F_1^\lambda=g+\mathcal R_\delta m_d\searrow g$, and the uniform layer bounds of \cref{thm:layers} give $J_\lambda\searrow J_\infty$. The estimates \eqref{eq:Ubholder} make $(U_\lambda)_b$ locally equicontinuous, and every subsequential limit is the weak derivative in $b$ of $U_\infty$, which proves \eqref{eq:directionalconvergence}.

Decreasing limits commute with infima, so $J_\lambda\searrow J_\infty$ implies $\lim_\lambda\inf_y(J_\lambda+yx)=\inf_y(J_\infty+yx)$. For $x>c/r$ we apply \eqref{eq:ratchetduality}. For $x=c/r$ we let $y\to\infty$ in \eqref{eq:Jratchet}, where the option integral tends to zero by $P(ya^\gamma,b)\to0$ and the integrable bound $Ka^{-\gamma}$. For $x<c/r$ the term $(x-c/r)y$ tends to $-\infty$ while the option integral stays bounded. This proves \eqref{eq:Vlimit}.

Finally fix a sample path on $[0,T]$. The factor has compact range and $Y$ has a positive minimum and a finite maximum. Local uniform convergence of $a_\lambda$ gives uniform convergence of the lower consumption barriers, which are all bounded above by the barrier for a fixed smaller cost. The upper barriers satisfy
$\inf_{t\leq T}\overline c_t^{\,\lambda}\geq[(1+\delta\lambda)/\sup_{t\leq T}Y_t]^{1/\gamma}\to\infty$,
so for large $\lambda$ no downward intervention occurs before $T$ on this path, including at time zero. The regulator is then the running maximum of its lower barrier above $c$, and since running maxima are nonexpansive in the uniform norm, \eqref{eq:controlconvergence} follows.
\end{proof}

The convergence \eqref{eq:controlconvergence} is stated for a fixed multiplier. For a fixed initial wealth the optimal multiplier depends on $\lambda$, and its convergence gives the corresponding statement for the primal optimal consumption.

\begin{corollary}\label{cor:primalconvergence}
Let $x>c/r$, let $y_\lambda^*$ be a minimizer in \eqref{eq:strongduality} for $\lambda<\infty$, and let $y_\infty^*$ be a minimizer in \eqref{eq:ratchetduality}. Then $y_\infty^*$ is unique, $y_\lambda^*\to y_\infty^*$, and the primal optimal consumption processes $c^{\lambda,*}=c^{\lambda,y_\lambda^*}$ and $c^{\infty,*}=c^{\infty,y_\infty^*}$ satisfy, for every $T<\infty$,
\begin{equation}
 \sup_{0\leq t\leq T}|c_t^{\lambda,*}-c_t^{\infty,*}|\longrightarrow0
 \quad\text{almost surely}.\label{eq:primalconvergence}
\end{equation}
\end{corollary}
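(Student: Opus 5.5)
Three facts have to be established: uniqueness of $y_\infty^*$, convergence $y_\lambda^*\to y_\infty^*$, and the pathwise convergence \eqref{eq:primalconvergence}. The first two are convex-analysis statements about the dual functions $y\mapsto J_\lambda(y,c,b)+yx$. The third combines \eqref{eq:controlconvergence} with continuity of the ratcheting policy \eqref{eq:ratchetpolicy} in the multiplier.

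\emph{Uniqueness of $y_\infty^*$.} I would use the budget map $\mathcal B_\infty(y)=-(J_\infty)_y(y,c,b)=\E\int_0^\infty H_tc_t^{\infty,y}dt$. By \eqref{eq:ratchetpolicy}, $c_t^{\infty,y}=\max\{c,\sup_{s\le t}(a_\infty(b_s)/(ye^{\delta s}H_s))^{1/\gamma}\}$ is pointwise nonincreasing in $y$. I want to show it is strictly decreasing on a set of positive $dP\otimes dt$ measure. Suppose $y_1<y_2$. Since $a_\infty>0$ and the running maximum of $a_\infty(b_s)/Y_s$ tends to infinity with positive probability (as $Y_s\to0$ along paths where $H$ decays), the running-maximum term eventually exceeds $c$ for both multipliers. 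On that event the two policies differ by the factor $(y_2/y_1)^{1/\gamma}$. Hence $\mathcal B_\infty$ is strictly decreasing, so $J_\infty+yx$ is strictly convex and differentiable, and its minimizer is unique, being characterized by $\mathcal B_\infty(y_\infty^*)=x$. A cleaner alternative is the primal uniqueness argument of \cref{thm:strongduality}: two different minimizers would produce two different optimal ratcheting consumptions exhausting the same budget, which that argument excludes. I would present whichever is shorter.

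\emph{Convergence of multipliers.} By \cref{thm:limit}, $J_\lambda\searrow J_\infty$ pointwise. Since these are convex finite functions of $y$, pointwise monotone convergence implies locally uniform convergence on $(0,\infty)$. The main obstacle is confining the minimizers $y_\lambda^*$ to a compact subset of $(0,\infty)$ uniformly in $\lambda$.
\begin{itemize}[leftmargin=*]
\item Upper bound: for large $\lambda$, $J_\lambda+yx\ge J_\infty+yx\ge u(c)/\delta+(x-c/r)y$, which exceeds $J_{\lambda_0}(1)+x\ge J_\lambda(1)+x$ once $y$ is large. This uses $x>c/r$.
\item Lower bound: \eqref{eq:Jupper} holds uniformly in $\lambda$, because its right side does not depend on $\lambda$. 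Using $J_\lambda\ge J_\infty$ and $J_\infty(y)\to\alpha u(c)$ as $y\searrow0$, the minimal value $\min_y(J_\lambda+yx)$ is at most $\alpha u(c)-\eta_0$ for some fixed $\eta_0>0$. For $y$ small enough, $J_\lambda(y)\ge J_\infty(y)>\alpha u(c)-\eta_0/2$, so minimizers stay away from zero.
\end{itemize}
Standard epi-convergence on this compact interval then shows that every limit point of $y_\lambda^*$ minimizes $J_\infty+yx$. Uniqueness of that minimizer gives $y_\lambda^*\to y_\infty^*$.

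\emph{Pathwise convergence.} I would split
\[
|c_t^{\lambda,*}-c_t^{\infty,*}|\le|c_t^{\lambda,y_\lambda^*}-c_t^{\infty,y_\lambda^*}|+|c_t^{\infty,y_\lambda^*}-c_t^{\infty,y_\infty^*}|.
\]
The second term is bounded by $|(y_\lambda^*)^{-1/\gamma}-(y_\infty^*)^{-1/\gamma}|\sup_{s\le T}(a_\infty(b_s)e^{-\delta s}/H_s)^{1/\gamma}$, because the maximum with $c$ is $1$-Lipschitz. This tends to zero on each path. For the first term I would rerun the proof of \eqref{eq:controlconvergence} with the multiplier ranging over a compact interval $[y_-,y_+]$ containing all $y_\lambda^*$ for large $\lambda$:
\begin{itemize}[leftmargin=*]
\item the upper barriers are at least $[(1+\delta\lambda)/(y_+\sup_{t\le T}e^{\delta t}H_t)]^{1/\gamma}\to\infty$, so for large $\lambda$ there is no downward intervention before $T$, uniformly in $y$;
\item the lower barriers converge uniformly in $(t,y)$ by local uniform convergence of $a_\lambda$;
\item running maxima are nonexpansive in the uniform norm.
\end{itemize}
Together these give $\sup_{t\le T}\sup_{y\in[y_-,y_+]}|c_t^{\lambda,y}-c_t^{\infty,y}|\to0$ almost surely, which controls the first term and yields \eqref{eq:primalconvergence}.
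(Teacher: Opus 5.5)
Your route is essentially the paper's. You use strict monotonicity of the budget map $\mathcal B_\infty$ for uniqueness, and you confine the $y_\lambda^*$ to a fixed compact subinterval of $(0,\infty)$. You get the confinement from the $\lambda$-independent bound \eqref{eq:Jupper} and the linear lower bound on $J_\infty$, where the paper uses a sublevel set of $J_\infty+yx$; these are the same estimates. The multipliers then converge by uniform convergence plus uniqueness. Pathwise, you use the absence of reductions on $[0,T]$ for large $\lambda$ and nonexpansiveness of running maxima; your triangle-inequality split is a more explicit form of the paper's last sentence. To say the upper barrier never meets consumption, note that the lower barriers are bounded on $[0,T]$ uniformly in $\lambda$ and $y\geq y_-$, since $a_\lambda\leq A_0$.

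One justification in the uniqueness step is false as stated and should be replaced. You claim the running maximum of $a_\infty(b_s)/Y_s$ tends to infinity with positive probability because $Y_s\to0$. But $Y_s=ye^{\delta s}H_s$, and ergodicity of $b$ gives, almost surely,
$$s^{-1}\log Y_s\to\delta-r-(\bar b^2+\eta^2/(2\kappa))/(2\sigma^2).$$
For admissible parameters with $\delta-r$ large this limit is positive. Then $\inf_{s\geq0}Y_s>0$, and since $a_\infty\leq A_0$, the running maximum stays bounded almost surely.

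What you actually need is that $\{S_t>c\}$, with $S_t=\sup_{s\leq t}(a_\infty(b_s)/Y_s^{y_2})^{1/\gamma}$, has positive $dP\otimes dt$-measure. The clean way to get it is the budget, as the paper does. Compare only two putative minimizers $y_1<y_2$. Then $\mathcal B_\infty(y_2)=x>c/r$, so $\E\int_0^\infty H_t(c_t^{\infty,y_2}-c)\,dt=x-c/r>0$. This forces $S_t>c$ on a set of positive measure, and there $c^{\infty,y_1}=(y_2/y_1)^{1/\gamma}c^{\infty,y_2}>c^{\infty,y_2}$. That contradicts $\mathcal B_\infty(y_1)=\mathcal B_\infty(y_2)=x$.

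This is where $x>c/r$ enters the uniqueness. Your alternative via primal uniqueness needs the same fact, to know the two candidate policies actually differ. Global strict decrease of $\mathcal B_\infty$ is more than you need and would require a separate support argument.
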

\begin{proof}
Write $\varphi_\lambda(y)=J_\lambda(y,c,b)+yx$ for $\lambda\leq\infty$ and $Y_t^y=ye^{\delta t}H_t$.

\emph{Uniqueness.} By the proof of \cref{thm:ratchetpolicy}, $J_\infty$ is convex and differentiable in $y$, and the subgradient argument in the proof of \cref{thm:strongduality} gives $-\partial_yJ_\infty(y,c,b)=\mathcal B_\infty(y)=\E\int_0^\infty H_tc_t^{\infty,y}dt$. If $y_1<y_2$ both minimize $\varphi_\infty$, then $\varphi_\infty$ is constant on $[y_1,y_2]$ and $\mathcal B_\infty(y_1)=\mathcal B_\infty(y_2)=x$. By \eqref{eq:ratchetpolicy}, $c_t^{\infty,y}=\max\{c,(y_2/y)^{1/\gamma}S_t\}$ with $S_t=\sup_{s\leq t}(a_\infty(b_s)/Y_s^{y_2})^{1/\gamma}$, so $c^{\infty,y_1}\geq c^{\infty,y_2}$, with strict inequality where $S_t>c$. Since $x>c/r=\E\int_0^\infty H_tcdt$, the set $\{S_t>c\}$ has positive $dP\otimes dt$ measure, and $\mathcal B_\infty(y_1)>\mathcal B_\infty(y_2)$, a contradiction.

\emph{Compactness.} Fix $\lambda_0<\infty$. For $\lambda\geq\lambda_0$, \eqref{eq:Flimit} gives $J_\infty\leq J_\lambda\leq J_{\lambda_0}$, so $\varphi_\infty(y_\lambda^*)\leq\varphi_\lambda(y_\lambda^*)=V_\lambda(x,c,b)\leq V_{\lambda_0}(x,c,b)$. The function $\varphi_\infty$ is convex and continuous, it is at least $u(c)/\delta+(x-c/r)y$ by the proof of \cref{thm:ratchetpolicy}, and it tends to $\alpha u(c)$ as $y\searrow0$ by the argument in the proof of \cref{thm:strongduality}, while $V_{\lambda_0}(x,c,b)<\alpha u(c)$ by \eqref{eq:Jupper}. Hence the sublevel set $\{\varphi_\infty\leq V_{\lambda_0}(x,c,b)\}$ is a compact interval $[y_{\rm lo},y_{\rm hi}]\subset(0,\infty)$, and it contains $y_\lambda^*$ for all $\lambda\geq\lambda_0$.

\emph{Multipliers.} The convex functions $J_\lambda$ decrease to the continuous function $J_\infty$, so the convergence is uniform on $[y_{\rm lo},y_{\rm hi}]$ by Dini's theorem. Every limit point $\bar y$ of $y_\lambda^*$ therefore satisfies $\varphi_\infty(\bar y)=\lim\varphi_\lambda(y_\lambda^*)=\lim V_\lambda(x,c,b)=V_\infty(x,c,b)$ by \eqref{eq:Vlimit}, and uniqueness gives $\bar y=y_\infty^*$.

\emph{Paths.} Fix a sample path and $T<\infty$. For $y\in[y_{\rm lo},y_{\rm hi}]$ and $\lambda\geq\lambda_0$ the lower consumption barriers are bounded on $[0,T]$ by $\sup_{t\leq T}(a_{\lambda_0}(b_t)/Y_t^{y_{\rm lo}})^{1/\gamma}$, by \cref{prop:costs}, while the upper barriers are at least $[(1+\delta\lambda)/\sup_{t\leq T}Y_t^{y_{\rm hi}}]^{1/\gamma}\to\infty$. As in the proof of \cref{thm:limit}, for large $\lambda$ no reduction occurs on $[0,T]$, and
\[
 c_t^{\lambda,*}=\max\biggl\{c,\ \sup_{0\leq s\leq t}\biggl(\frac{a_\lambda(b_s)}{Y_s^{y_\lambda^*}}\biggr)^{\frac1\gamma}\biggr\},
 \qquad 0\leq t\leq T.
\]
Since $a_\lambda\to a_\infty$ uniformly on the range of the factor on $[0,T]$ and $y_\lambda^*\to y_\infty^*$, the barriers converge uniformly on $[0,T]$ to that of $c^{\infty,*}$, and running maxima are nonexpansive in the uniform norm, which proves \eqref{eq:primalconvergence}.
\end{proof}

\subsection{Constant investment opportunities}\label{sec:constant}
When the risk premium is frozen at $b_t=b_0\ne0$, that is, $\eta=\kappa=0$, everything is explicit. Put $\theta=b_0/\sigma$ and let $m_+>1$ and $m_-<0$ be the roots of
\begin{equation}
 \frac{\theta^2}{2}m(m-1)+(\delta-r)m-\delta=0.
 \label{eq:roots}
\end{equation}
Between its two thresholds $a$ and $d$, the marginal value is
\begin{equation}
 v(z)=D_1\Bigl(\frac za\Bigr)^{m_+}+D_2\Bigl(\frac za\Bigr)^{m_-}+\frac1\delta-\frac zr,
 \label{eq:constantv}
\end{equation}
with value matching and smooth fit
\begin{equation}
 v(a)=\alpha,\quad v'(a)=0,\qquad
 v(d)=-\lambda,\quad v'(d)=0.\label{eq:constantbc}
\end{equation}
The conditions at $a$ give
\begin{align}
 D_1&=\frac{(\alpha-\frac1\delta)m_-+(m_--1)\frac ar}{m_--m_+},\label{eq:D1}\\
 D_2&=\frac{(\alpha-\frac1\delta)m_++(m_+-1)\frac ar}{m_+-m_-},
 \label{eq:D2}
\end{align}
and the wealth--consumption ratio in the inaction interval is
\begin{equation}
 w_\lambda(z)=\frac1r-
 \frac{D_1m_+}{a(1-\gamma+\gamma m_+)}\Bigl(\frac za\Bigr)^{m_+-1}
 -\frac{D_2m_-}{a(1-\gamma+\gamma m_-)}\Bigl(\frac za\Bigr)^{m_--1}.
 \label{eq:constantw}
\end{equation}
These formulas reproduce the analysis of \cite{CJK}, and we use them as a benchmark for the numerics in \cref{sec:numerics}. In the ratcheting problem,
\begin{equation}
 a_\infty=(1-\delta\alpha)\frac{m_+-1}{m_+},\qquad
 D_1^\infty=0,\qquad
 D_2^\infty=\frac{1-\delta\alpha}{\delta(m_--1)},\label{eq:constantinf}
\end{equation}
and
\begin{equation}
 w_\infty(a_\infty)
 =\frac1r\frac{\gamma(m_--1)}{1-\gamma+\gamma m_-},\qquad
 \lim_{z\to\infty}w_\infty(z)=\frac1r.
 \label{eq:constantwealthinf}
\end{equation}
The second limit concerns the ratcheting wealth map for fixed parameters. The reduction threshold for finite cost behaves differently.

\begin{proposition}\label{prop:noncommuting}
In the model with constant investment opportunities,
\begin{equation}
 \lim_{\lambda\to\infty}w_\lambda(d_\lambda)
 =\frac{\gamma(m_+-1)}{r(1-\gamma+\gamma m_+)}
 <\frac1r,\label{eq:noncommuting}
\end{equation}
whereas the solvency floor of the ratcheting problem is $1/r$.
\end{proposition}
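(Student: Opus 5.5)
The plan is to evaluate the explicit wealth map \eqref{eq:constantw} at $z=d_\lambda$ and use the smooth fit condition at the reduction threshold to remove the unknown growing coefficient. Write $a=a_\lambda$, $d=d_\lambda$, and $D_1,D_2$ for the coefficients \eqref{eq:D1}--\eqref{eq:D2}. These coefficients are fixed by the conditions at $a$ alone, but they depend on $\lambda$ through $a$. Formula \eqref{eq:constantw} holds in the inaction interval $(a,d)$. Both sides are continuous up to $z=d$, because $v$ and $\mathcal I_\lambda$ are continuous, so the formula may be evaluated at $z=d$.

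\emph{Step 1 (a priori control of the parameters).} By \cref{thm:limit}, which applies verbatim to the frozen model, $a_\lambda\searrow a_\infty$. Here $a_\infty=(1-\delta\alpha)(m_+-1)/m_+\in(0,A_0]$ is positive by \eqref{eq:constantinf}. By \eqref{eq:boundarybounds}, $d_\lambda\geq1+\delta\lambda\to\infty$. Hence $a_\lambda$ lies in a compact subset of $(0,\infty)$ for large $\lambda$, and \eqref{eq:D1}--\eqref{eq:D2} show that $D_1$ and $D_2$ stay bounded.

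\emph{Step 2 (smooth fit at $d$).} Differentiating \eqref{eq:constantv} and imposing $v'(d)=0$ from \eqref{eq:constantbc} gives
\[
 \frac{D_1m_+}{a}\Bigl(\frac da\Bigr)^{m_+-1}
 =\frac1r-\frac{D_2m_-}{a}\Bigl(\frac da\Bigr)^{m_--1}.
\]
Since $m_-<0$, we have $(d/a)^{m_--1}\to0$. With Step 1, this shows $D_1m_+a^{-1}(d/a)^{m_+-1}\to1/r$.

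\emph{Step 3 (conclusion).} We substitute Step 2 into \eqref{eq:constantw} at $z=d$. The $m_-$ term tends to zero, because its prefactor is bounded and $(d/a)^{m_--1}\to0$; the factor $1-\gamma+\gamma m_-$ is a fixed nonzero constant, since it is negative for $m_-<0$ and $\gamma>1$. The $m_+$ term converges to $[r(1-\gamma+\gamma m_+)]^{-1}$, so
\[
 w_\lambda(d_\lambda)\to\frac1r\Bigl(1-\frac1{1-\gamma+\gamma m_+}\Bigr)
 =\frac{\gamma(m_+-1)}{r(1-\gamma+\gamma m_+)}.
\]
The denominator $1-\gamma+\gamma m_+=1+\gamma(m_+-1)$ is positive because $m_+>1$. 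The strict inequality with $1/r$ is then equivalent to $\gamma(m_+-1)<1+\gamma(m_+-1)$, which is immediate. The solvency floor $1/r$ of the ratcheting problem is the feasibility statement in \cref{thm:ratchetpolicy}, in wealth--consumption units.

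The only delicate point is that $(d/a)^{m_+-1}$ blows up, so $D_1$ cannot be treated as bounded-times-bounded in that term. This is why Step 2 replaces the whole product using the smooth fit identity rather than estimating $D_1$ and the power separately. Apart from this, the argument needs only the explicit formulas and the limits already established, together with a check that \eqref{eq:constantw} is valid up to the endpoint $d$.
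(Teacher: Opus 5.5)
Your proposal is correct and follows the paper's proof. Smooth fit at $d_\lambda$ forces the $m_+$ term to $1/r$ even though $D_1\to0$, the $m_-$ term vanishes because $D_2$ and $a_\lambda^{-1}$ stay bounded while $d_\lambda/a_\lambda\to\infty$, and substituting into \eqref{eq:constantw} gives the limit.

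One minor caveat concerns Step 1. \cref{thm:limit} is stated under \cref{ass:main}, which excludes $\eta=\kappa=0$, so it does not apply ``verbatim'' to the frozen model. Your argument only needs $a_\infty\leq a_\lambda\leq A_0$ and $d_\lambda\geq1+\delta\lambda$, and these follow from the same comparison and contact arguments in one dimension; the paper simply asserts these limits.
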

\begin{proof}
We have $a_\lambda\to a_\infty>0$, $d_\lambda\to\infty$, $D_1\to0$ and $D_2\to D_2^\infty$. Smooth fit at $d$ gives
\[
 \frac{D_1m_+}{a}\Bigl(\frac da\Bigr)^{m_+-1}
 +\frac{D_2m_-}{a}\Bigl(\frac da\Bigr)^{m_--1}=\frac1r.
\]
The second term tends to zero because $D_2$ and $a^{-1}$ stay bounded and $m_--1<0$, so the first term tends to $1/r$ even though $D_1\to0$. Substitution in \eqref{eq:constantw} gives the limit $1/r-1/[r(1-\gamma+\gamma m_+)]$, which is \eqref{eq:noncommuting}. Its denominator is $1+\gamma(m_+-1)>1$, so the inequality is strict.
\end{proof}

This clarifies the limit of the lower wealth threshold discussed in Appendix~K of \cite{CJK}. The value $1/r$ is both the solvency floor of the ratcheting problem and $\lim_{z\to\infty}w_\infty(z)$, whereas the reduction threshold evaluated at its own moving dual threshold converges to \eqref{eq:noncommuting}. The vanishing coefficient $D_1$ is multiplied by the diverging factor $(d/a)^{m_+-1}$, so the limit $\lambda\to\infty$ cannot be interchanged with evaluation at $z=d_\lambda$, although the limiting policy itself is the ratcheting policy.

For $r=0.03$, $\delta=0.04$, $b_0=0.05$, $\sigma=0.18$, $\alpha=2$ and $\gamma=2$, the limit \eqref{eq:noncommuting} is $15.8614$, whereas $1/r=33.3333$ and the ratcheting increase threshold in wealth units is $w_\infty(a_\infty)=47.0721$. For $\lambda=40$, $160$ and $2560$ the reduction threshold $w_\lambda(d_\lambda)$ equals $16.0613$, $15.8907$ and $15.8617$, and for $\lambda=40$ the increase threshold is $w_\lambda(a_\lambda)=38.1827$.

\section{Numerical illustrations}\label{sec:numerics}
\subsection{Discretization and parameters}
We approximate the uncontrolled diffusion by a Markov chain on a grid in $(x,b)=(\log z,b)$. From each grid point a single standard normal innovation $\xi$ moves both coordinates,
\begin{align}
 x'&=x+\left(\delta-r-\frac{b^2}{2\sigma^2}\right)h
                  -\frac b\sigma\sqrt h\,\xi,\label{eq:step1}\\
 b'&=b+\kappa(\bar b-b)h-\eta\sqrt h\,\xi,\label{eq:step2}
\end{align}
the expectation over $\xi$ is computed by Gauss--Hermite quadrature with five nodes, and the end point is mapped back to the grid by interpolation that is linear in $z$ and in $b$. All weights are nonnegative and the transition matrix $\Pi_h$ has unit row sums, so the scheme is monotone and needs no central differences for the mixed derivative. Interpolation linear in $z$ also reproduces the linear payoff $z$ away from the ends of the grid. For finite $\lambda$ we solve
\begin{equation}
 v_h=\operatorname{clip}_{[-\lambda,\alpha]}
     \left(f_h+e^{-\delta h}\Pi_hv_h\right),\qquad
 f_h(z)=\frac{1-e^{-\delta h}}\delta
                -z\frac{1-e^{-rh}}r,\label{eq:numericalbellman}
\end{equation}
with the exact expected running reward $f_h$. For ratcheting we solve the bounded put problem
\begin{equation}
 P_h(z,b)=\max\left\{\Bigl(K-\frac zr\Bigr)^+,\ e^{-\delta h}(\Pi_hP_h)(z,b)\right\}
 \label{eq:numericalput}
\end{equation}
and set $v_h=g-P_h$, which is more stable than truncating the unbounded negative tail of $v_\infty$. Both maps are contractions with factor $e^{-\delta h}$, and we solve them by policy iteration, in which obstacle rows become identity rows and continuation rows come from the $M$-matrix $I-e^{-\delta h}\Pi_h$. The scheme is monotone, stable and consistent with \eqref{eq:uncontrolledZ}, so it converges to the viscosity solution as $(\Delta x^2+\Delta b^2)/h\to0$ and the truncation is removed \cite{BS91,KD01,FS06}.

We use the illustrative parameters
\begin{equation}
 r=0.03,\quad\delta=0.04,\quad\sigma=0.18,\quad
 \kappa=0.25,\quad\bar b=0.05,\quad\eta=0.03,\quad
 \alpha=2,\quad\gamma=2,\label{eq:calibration}
\end{equation}
which satisfy \cref{ass:main}. The market coefficients are those of the numerical study in \cite{FS26}, whose labor income and preferences differ from ours. The main grid has $449\times101$ points on $[-4.8,6.4]\times[-0.20,0.30]$ with $h=0.05$. At the ends in $x$ we impose the obstacle values, at the ends in $b$ we use clamped interpolation, and each threshold is located at the midpoint in $\log z$ between adjacent contact and continuation nodes.

\subsection{Thresholds and the cost limit}
\Cref{fig:bands} shows the inaction region for $\lambda=40$. It is not symmetric about the long run mean $\bar b$, and the increase threshold peaks near $b=0$. In wealth units the order of the thresholds is reversed, because a high wealth--consumption ratio triggers an increase of consumption and a low ratio a reduction. The wealth thresholds are computed from \eqref{eq:Tmap} by quadrature, with $F_0$ obtained from the discrete entry premium (\cref{sec:discreterec}) and with $F_0=0$ and $F_1-g=0$ beyond the grid. Unlike the dual problem, this step depends on $\gamma$. At $b=\bar b$ the thresholds are $w_\uparrow\approx27.8767$ and $w_\downarrow\approx12.7979$. \Cref{sec:accuracy} explains how these values were obtained and how far they can be trusted.

\begin{figure}[htbp]
\centering
\begin{subfigure}[t]{.49\linewidth}\centering
\includegraphics[width=\linewidth]{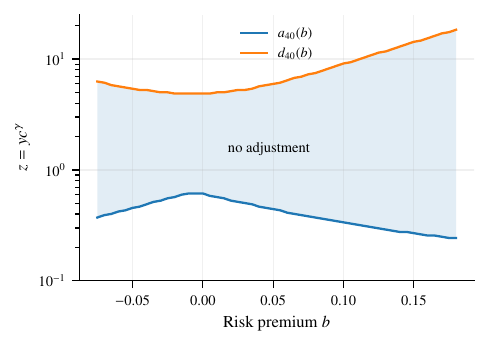}
\caption{Dual thresholds}\label{fig:dualband}
\end{subfigure}\hfill
\begin{subfigure}[t]{.49\linewidth}\centering
\includegraphics[width=\linewidth]{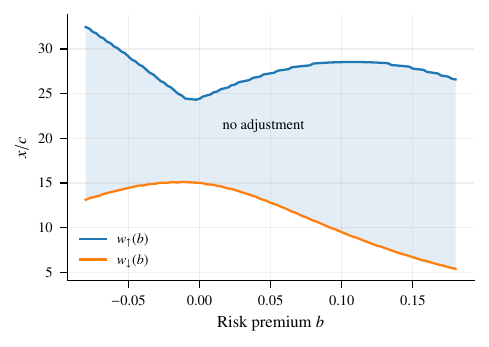}
\caption{Wealth thresholds}\label{fig:wealthband}
\end{subfigure}
\caption{Inaction region for $\lambda=40$. (a) The thresholds $a_{40}(b)$ and $d_{40}(b)$ on a logarithmic scale. (b) The thresholds $w_\uparrow(b)$ and $w_\downarrow(b)$ for the wealth--consumption ratio, computed with the wealth map \eqref{eq:wealthmap} on the domain $[-8.8,10.4]\times[-0.3,0.4]$ with $h=0.025$. The accuracy of these values is discussed in \cref{sec:accuracy}.}
\label{fig:bands}
\end{figure}

As the reduction cost grows, the increase threshold moves down towards its ratcheting limit and the reduction threshold escapes to infinity above the bound $d_\lambda\geq1+\delta\lambda$ (\cref{fig:costlimits}), as \cref{prop:costs,thm:limit} predict. The maximum of the increase threshold near $b=0$ is observed numerically for \eqref{eq:calibration} and is not covered by our results.

\begin{figure}[htbp]
\centering
\begin{subfigure}[t]{.49\linewidth}\centering
\includegraphics[width=\linewidth]{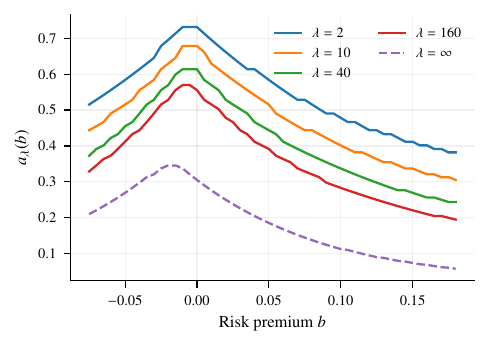}
\caption{Increase thresholds}\label{fig:costlimit}
\end{subfigure}\hfill
\begin{subfigure}[t]{.49\linewidth}\centering
\includegraphics[width=\linewidth]{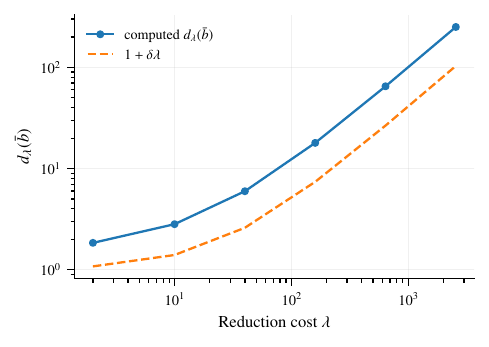}
\caption{Reduction threshold at $\bar b$}\label{fig:escape}
\end{subfigure}
\caption{The limit of infinite reduction cost. (a) Increase thresholds $a_\lambda(b)$ for four costs and for ratcheting, the latter computed from the put problem \eqref{eq:numericalput}. (b) The reduction threshold $d_\lambda(\bar b)$ and the bound $1+\delta\lambda$. The points for $\lambda=640$ and $2560$ use $h=0.10$.}
\label{fig:costlimits}
\end{figure}

\Cref{fig:frozen} compares the ratcheting threshold of the Kim--Omberg model with the threshold obtained when the risk premium is fixed forever at the value on the horizontal axis. The curves cross, so freezing the opportunity set at its current value errs in different directions for different premia. \Cref{fig:noncommuting} illustrates \cref{prop:noncommuting}. As $\lambda$ grows, the reduction threshold $w_\lambda(d_\lambda)$ converges to the limit \eqref{eq:noncommuting}, strictly below the solvency floor $1/r$, although the reduction region disappears in dual coordinates.

\Cref{fig:ratchetpath} shows two simulated consumption paths. Under ratcheting (\cref{fig:ratchetpatha}) consumption is the running maximum of a moving barrier above the inherited level. \Cref{fig:ilapath} uses $\lambda=40$, for which $\mathrm{ILA}=(1+\delta\lambda)/(1-\delta\alpha)=2.8261$. Consumption stays constant as long as it lies between the two barriers, although both barriers move considerably. Early in the fourth year adverse shocks raise the shadow price $Y_t$ and push the upper barrier down to the consumption level. Consumption is cut on five days between $t=3.3611$ and $t=3.4127$, from $1$ to $0.9452$. It then stays at this level until $t=32.6746$, when the lower barrier reaches it and consumption starts to rise. By \eqref{eq:uniformgap} the ratio of the two barriers is at least $\mathrm{ILA}^{1/\gamma}=1.6811$, so small fluctuations of the shadow price leave consumption unchanged.

\begin{figure}[htbp]
\centering
\begin{subfigure}[t]{.49\linewidth}\centering
\includegraphics[width=\linewidth]{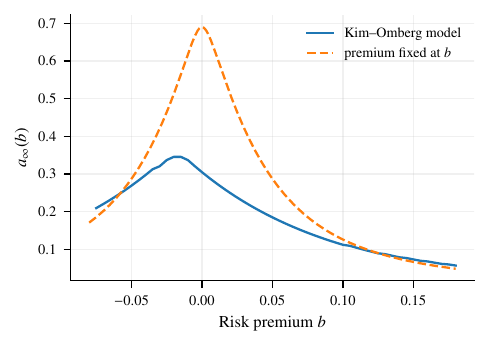}
\caption{Ratcheting threshold}\label{fig:frozen}
\end{subfigure}\hfill
\begin{subfigure}[t]{.49\linewidth}\centering
\includegraphics[width=\linewidth]{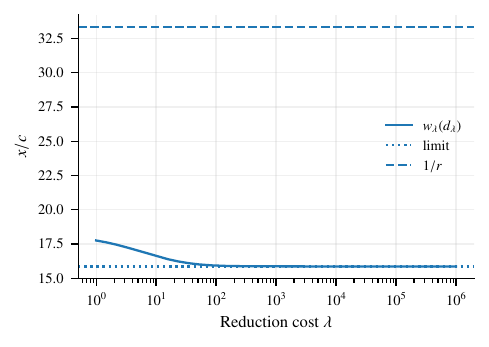}
\caption{Reduction threshold in wealth}\label{fig:noncommuting}
\end{subfigure}
\caption{Comparison with constant investment opportunities. (a) The ratcheting threshold $a_\infty(b)$ in the Kim--Omberg model and in the model with the risk premium permanently fixed at $b$, given by \eqref{eq:constantinf}. (b) For a constant risk premium $b_0=0.05$, the ratio $w_\lambda(d_\lambda)$ at the reduction threshold, its limit \eqref{eq:noncommuting}, and the ratcheting solvency floor $1/r$.}
\label{fig:constantcomparison}
\end{figure}

\begin{figure}[htbp]
\centering
\begin{subfigure}[t]{.49\linewidth}\centering
\includegraphics[width=\linewidth]{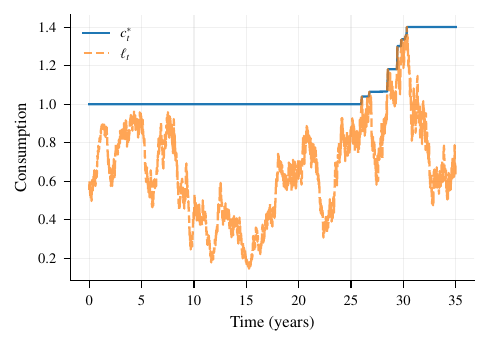}
\caption{Ratcheting}\label{fig:ratchetpatha}
\end{subfigure}\hfill
\begin{subfigure}[t]{.49\linewidth}\centering
\includegraphics[width=\linewidth]{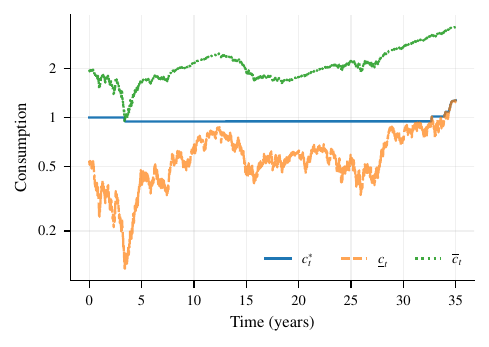}
\caption{$\lambda=40$, $\mathrm{ILA}=2.8261$}\label{fig:ilapath}
\end{subfigure}
\caption{Simulated consumption paths over 35 years with daily steps and $c_{0-}=1$. (a) Ratcheting with $Y_0=0.6$ and seed $20260925$. Consumption $c_t^*$ is the running maximum above $c_{0-}$ of the barrier $\ell_t=[a_\infty(b_t)/Y_t]^{1/\gamma}$. (b) Reduction cost $\lambda=40$ with $Y_0=1.6$, so that $Y_0c_{0-}^\gamma$ lies near the middle of the inaction interval at $\bar b$. Consumption is the regulator \eqref{eq:reflection} of $c_{0-}$ between the barriers $\underline c_t$ and $\overline c_t$ of \eqref{eq:consbarriers}, shown on a logarithmic scale. The seed $20260929$ is the first after $20260925$ for which the path contains both an increase and a reduction.}
\label{fig:ratchetpath}
\end{figure}

\subsection{Regions of smoothness}
For \eqref{eq:calibration} the Gaussian moment equation gives
\begin{equation}
 c_{r,\kappa}=2.2395,\qquad L_*=12.4417,
 \qquad B_*=0.0672,
 \label{eq:numericcone}
\end{equation}
whereas the simpler bound $c_{r,\kappa}\leq(2r)^{-1/2}$ would only give the half width $\eta/\sqrt{2r}=0.1225$. The zeros of $G$ are $-0.0240$ and $0.0990$. The first lies inside $[-B_*,B_*]$, so \cref{thm:smoothtails} applies on
\begin{equation}
 (-\infty,-B_*)\ \cup\ (B_*,0.0990)\ \cup\ (0.0990,\infty).
 \label{eq:numericT}
\end{equation}
The stationary distribution of the premium is normal with mean $\bar b=0.05$ and variance $\eta^2/(2\kappa)=0.0018$. The central interval $[-B_*,B_*]$ therefore contains the long run mean and has stationary probability $0.6544$. \Cref{thm:smoothtails} covers the tails of the distribution but not its center, where we only know that the thresholds are Lipschitz in logarithmic scale. \Cref{fig:adaptedgeometry} plots the computed thresholds in the adapted coordinate. Outside $[-B_*,B_*]$ each curve is strictly monotone, as shown in \cref{sec:chart}, and all three curves attain their maximum inside this interval, in agreement with \cref{prop:characteristic}.

\begin{figure}[htbp]
\centering
\begin{subfigure}[t]{.49\linewidth}\centering
\includegraphics[width=\linewidth]{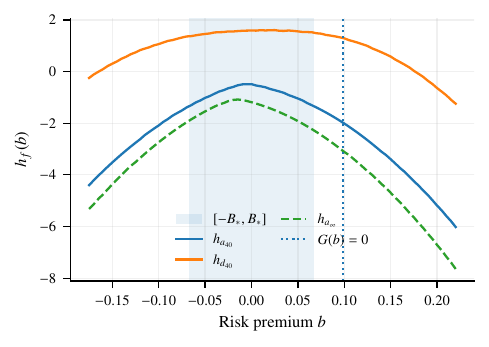}
\caption{Thresholds in adapted coordinates}\label{fig:adaptedgeometry}
\end{subfigure}\hfill
\begin{subfigure}[t]{.49\linewidth}\centering
\includegraphics[width=\linewidth]{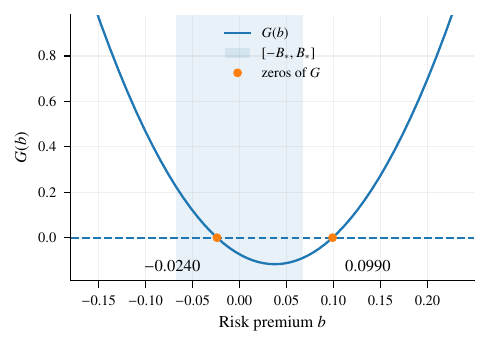}
\caption{Transport coefficient}\label{fig:transport}
\end{subfigure}
\caption{(a) The thresholds in the adapted coordinate $h_f(b)=\log f(b)-b^2/(2\sigma\eta)$ for $f=a_{40},d_{40},a_\infty$. Smoothness is proved outside the shaded interval $[-B_*,B_*]$ and away from the zero $b=0.0990$ of $G$. (b) The coefficient $G(b)$ of \eqref{eq:G}, at whose zeros the parabolic chart of \cref{sec:chart} degenerates.}
\label{fig:smoothregions}
\end{figure}

\subsection{Accuracy}\label{sec:accuracy}
\Cref{tab:grid} reports the thresholds at $b=\bar b$ on three grids of the main scheme. On the main grid the residuals of the discrete Bellman equations are below $10^{-13}$ for $\lambda=40$ and for the ratcheting put, so the discrete equations are solved to machine precision and the error comes from the discretization. We estimate this error in two independent ways.

First, with the risk premium frozen at $b_0=0.05$ the explicit solution of \cref{sec:constant} gives $a=0.3818$ and $d=5.9892$ for $\lambda=40$ and $a_\infty=0.2872$. \Cref{tab:benchmark} shows that the error in the marginal value decreases roughly linearly in $h$. At $h=0.05$ the computed thresholds are $a=0.3916$ and $d=5.8270$, with relative errors $0.0256$ and $-0.0271$.

Second, the coordinates \eqref{eq:straighten} remove the mixed derivative, and an upwind finite difference scheme on a uniform $(s,b)$ grid, with rates $G^\pm/\Delta s$ in $s$ and $\eta^2/(2\Delta b^2)+\mu^\pm/\Delta b$ in $b$, gives a second monotone approximation. We use reflection at the ends in $b$ and obstacle values at the ends in $s$. \Cref{tab:grid} and \cref{fig:independent} compare the two schemes. The increase thresholds of the adapted scheme are consistent with first order convergence in the mesh size, and extrapolation from its two finest grids gives $a_{40}(\bar b)\approx0.4194$ and $a_\infty(\bar b)\approx0.1706$. The main scheme exceeds these values by $0.0581$ and $0.0844$ in relative terms. Its reduction threshold differs from the value on the finest adapted grid by $0.0023$ in relative terms, which is less than one grid cell of $0.0250$ in $\log z$. Enlarging either computational domain leaves the dual thresholds at $\bar b$ unchanged.

The wealth thresholds need two further steps, the potential $F_0$ from the discrete entry premium and the quadrature \eqref{eq:Tmap} with the tail convention $F_0=0$ above the grid and $F_1-g=0$ below it. They inherit the errors of the dual thresholds, and \cref{tab:wealth} checks them in two independent ways. On the dual route we repeat the reconstruction on four grids and on enlarged domains with the same mesh size. The dual thresholds do not depend on the domain, but the wealth thresholds do, because $F_0$ decays slowly as $z\to\infty$ and the truncation affects the integral in \eqref{eq:Tmap}. Only the range of $\log z$ matters. Widening the range of $b$ from $[-0.2,0.3]$ to $[-0.3,0.4]$ changes the wealth thresholds by at most $0.0001$, both on the main domain and on $[-12.8,14.4]$. From the main domain to the largest one, $w_\uparrow(\bar b)$ falls by $0.0406$ and $w_\downarrow(\bar b)$ by $0.0237$ in relative terms, and the corresponding changes beyond $[-8.8,10.4]$ are only $0.0040$ and $0.0023$. Halving $h$ from $0.05$ to $0.025$ raises $w_\uparrow(\bar b)$ by $0.0119$ on the main domain and by $0.0122$ on $[-8.8,10.4]$, because $a_{40}(\bar b)$ decreases towards the extrapolated value of the adapted scheme. The same refinement changes $w_\downarrow(\bar b)$ by at most $0.0013$.

The primal route uses neither the dual problem nor the reconstruction. It solves the problem directly in the variables $(\log(x/c),b)$ by a monotone semi-Lagrangian scheme in discrete time with step $\Delta t$. In each period the investor either keeps consumption and chooses the risky share from a grid, or adjusts consumption at the cost $\hat\lambda$ of \eqref{eq:ila}, and Howard policy iteration solves the discrete problem exactly. With the risk premium frozen at $b_0=0.05$, the finest primal grid gives $38.1909$ and $16.1373$, against the exact thresholds $38.1827$ and $16.0613$ of \cref{sec:constant}. For the Kim--Omberg model the two finest primal grids differ by at most $0.0014$ in relative terms. The finest dual values, with $h=0.025$ on $[-8.8,10.4]$, lie below the finest primal values by $0.0010$ for $w_\downarrow(\bar b)$ and by $0.0214$ for $w_\uparrow(\bar b)$, again in relative terms. The larger gap is explained by the dual threshold $a_{40}(\bar b)$, which at $h=0.025$ is still $0.0384$ above the extrapolated value of the adapted scheme, since a larger $a_{40}$ gives a smaller $w_\uparrow$. We therefore report the finest primal values $w_\uparrow(\bar b)\approx27.8767$ and $w_\downarrow(\bar b)\approx12.7979$. \Cref{fig:wealthband} uses the finest dual reconstruction, so at $\bar b$ its upper curve lies $0.0214$ too low in relative terms.

The figures are therefore reliable for the shape of the thresholds and for the proved orderings. On the main grid individual threshold values carry relative errors as large as $0.0844$, so smaller differences, for instance between thresholds at nearby values of the risk premium, should not be read off the figures.

\begin{table}[htbp]
\centering
\caption{Thresholds at $b=\bar b$. The Markov chain scheme uses $N_1\times N_b$ nodes in $(\log z,b)$ and time step $h$. The adapted scheme uses finite differences with $N_1\times N_b$ nodes in $(s,b)\in[-18,8]\times[-0.25,0.35]$, and the extrapolation from its two finest grids assumes first order convergence in the mesh size.}
\label{tab:grid}
\begin{tabular}{@{}lrrrr@{}}
\toprule
Scheme & $N_1\times N_b$ & $a_{40}(\bar b)$ & $d_{40}(\bar b)$ & $a_\infty(\bar b)$\\
\midrule
Markov chain, $h=0.20$ & $113\times26$ & 0.4499 & 6.0572 & 0.1930\\
Markov chain, $h=0.10$ & $225\times51$ & 0.4607 & 5.9003 & 0.1873\\
Markov chain, $h=0.05$ & $449\times101$ & 0.4437 & 5.9745 & 0.1850\\
\midrule
Adapted & $521\times121$ & 0.3893 & 5.7920 & 0.1749\\
Adapted & $1041\times241$ & 0.4041 & 6.0133 & 0.1727\\
Adapted & $1561\times361$ & 0.4092 & 5.9883 & 0.1720\\
Adapted, extrapolated & & 0.4194 & & 0.1706\\
\bottomrule
\end{tabular}
\end{table}

\begin{table}[htbp]
\centering
\caption{Maximum absolute error of the marginal value on $z\in[0.1,30]$ in the model with constant risk premium $b_0=0.05$.}
\label{tab:benchmark}
\begin{tabular}{@{}rrrr@{}}
\toprule
$h$ & $N_1$ & $\lambda=40$ & Ratcheting\\
\midrule
0.200 & 113 & 0.5923 & 0.3011\\
0.100 & 225 & 0.2935 & 0.1611\\
0.050 & 449 & 0.1500 & 0.0809\\
0.025 & 897 & 0.0618 & 0.0327\\
\bottomrule
\end{tabular}
\end{table}

\begin{table}[htbp]
\centering
\caption{Wealth thresholds at $b=\bar b$ for $\lambda=40$. The dual route uses the Markov chain scheme with time step $h$ on $N_1\times N_b$ nodes in $(\log z,b)$, with $b\in[-0.2,0.3]$ on the domain $[-4.8,6.4]$ and $b\in[-0.3,0.4]$ on the larger ones, followed by the reconstruction \eqref{eq:Tmap}--\eqref{eq:wealthmap}. The primal route uses the semi-Lagrangian scheme with $N_y$ nodes in $\log(x/c)\in[\log3,\log90]$, $b\in[-0.2,0.3]$ with mesh $\eta\sqrt{\Delta t}$ and shares in $[-3.5,5]$ with mesh $0.025$ ($0.0125$ in the last row). The last row is the model with the premium frozen at $0.05$, whose exact thresholds are $38.1827$ and $16.0613$.}
\label{tab:wealth}
\small
\begin{tabular}{@{}llrrr@{}}
\toprule
Route & Grid & Domain in $\log z$ & $w_\uparrow(\bar b)$ & $w_\downarrow(\bar b)$\\
\midrule
Dual & $h=0.2$, $113\times26$ & $[-4.8,6.4]$ & 27.3107 & 12.8497\\
Dual & $h=0.1$, $225\times51$ & $[-4.8,6.4]$ & 27.2285 & 13.0721\\
Dual & $h=0.05$, $449\times101$ & $[-4.8,6.4]$ & 27.9789 & 13.0795\\
Dual & $h=0.025$, $897\times201$ & $[-4.8,6.4]$ & 28.3133 & 13.0630\\
Dual & $h=0.05$, $609\times141$ & $[-6.8,8.4]$ & 27.2018 & 12.8677\\
Dual & $h=0.05$, $769\times141$ & $[-8.8,10.4]$ & 26.9515 & 12.7995\\
Dual & $h=0.05$, $1089\times141$ & $[-12.8,14.4]$ & 26.8431 & 12.7699\\
Dual & $h=0.025$, $1537\times281$ & $[-8.8,10.4]$ & 27.2793 & 12.7847\\
\midrule
Primal & $N_y=400$, $\Delta t=1/48$ & & 27.7561 & 12.7782\\
Primal & $N_y=800$, $\Delta t=1/96$ & & 27.9156 & 12.8095\\
Primal & $N_y=1600$, $\Delta t=1/192$ & & 27.8767 & 12.7979\\
\midrule
Primal, $b$ frozen & $N_y=1600$, $\Delta t=1/192$ & & 38.1909 & 16.1373\\
\bottomrule
\end{tabular}
\end{table}

\begin{figure}[htbp]
\centering
\includegraphics[width=.52\linewidth]{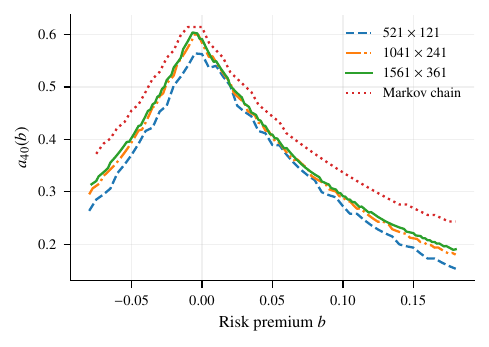}
\caption{The increase threshold $a_{40}(b)$ computed by finite differences in adapted coordinates on grids with $521\times121$, $1041\times241$ and $1561\times361$ nodes, and by the Markov chain scheme with $h=0.05$.}
\label{fig:independent}
\end{figure}

\section{Conclusion}\label{sec:conclusion}
We studied consumption adjustment with utility costs in the complete market of Kim and Omberg through a double obstacle problem for the marginal value of consumption, whose diffusion matrix has rank one. A sharp occupation estimate bounds the sensitivity of the marginal value to the risk premium by its sensitivity to the shadow price. It makes the intervention thresholds Lipschitz in logarithmic scale and gives locally bounded second derivatives and smooth fit along the diffusion direction at every contact point. Away from a central interval of risk premia and at most two further points, the problem reduces to a parabolic obstacle problem in one space variable, and boundary Harnack estimates make the thresholds infinitely differentiable.

The optimal policy needs only the Lipschitz bound and the smooth fit along the diffusion. Potentials of the contact reactions determine the dual value, the regulator between the two consumption barriers gives optimal consumption, and the budget identity gives wealth and the portfolio, which keeps an intertemporal hedging term at the thresholds. Ratcheting is the limit of infinite reduction cost, with solvency floor $c/r$, and optimal consumption from a fixed initial wealth above this floor converges to the ratcheting policy. With constant opportunities the reduction threshold in wealth units converges to a level strictly below this floor.

Two questions remain open. The first is the regularity of the thresholds over the central interval $[-B_*,B_*]$ and at the zeros of $G$, where we neither prove nor exclude singular points. This interval can contain the long run mean and most of the stationary distribution of the premium, and closing the gap requires a regularity theory at characteristic points of operators whose diffusion matrix has rank one. The second is the incomplete market. An independent shock to the risk premium makes the operator elliptic, but the dual problem then involves an optimization over pricing measures, and the budget representation used here is no longer available.

\appendix

\section{Pricing and integrability}\label{sec:pricingappendix}
\subsection{The state-price exponential}
Fix $T<\infty$ and let $\tau_n=\inf\{t:|b_t|\geq n\}$ with $n>|b_0|$. The stopped exponential $M_{t\wedge\tau_n}$ is a true martingale. Under the corresponding measure $Q_n$ on $\mathcal F_T$, before $\tau_n$ the factor satisfies
\[
 db_t=\left[\kappa\bar b-\Bigl(\kappa-\frac\eta\sigma\Bigr)b_t\right]dt
                         -\eta dW_t^{Q_n}.
\]
The usual stopped $L^2$ estimate for this linear equation, with the Burkholder--Davis--Gundy and Gronwall inequalities, gives
\[
 \sup_n\E^{Q_n}\sup_{t\leq T}|b_{t\wedge\tau_n}|^2
 \leq C_T(1+b_0^2),
\]
hence $Q_n(\tau_n\leq T)\leq C_T(1+b_0^2)/n^2\to0$. Since $M_T=M_{T\wedge\tau_n}$ on $\{\tau_n>T\}$,
\[
 \E[M_T\chi_{\{\tau_n>T\}}]=Q_n(\tau_n>T)\longrightarrow1.
\]
As a positive local martingale, $M$ has expectation at most one, and monotone convergence on the events $\{\tau_n>T\}$ gives $\E M_T=1$. The same argument applies from every initial state and on every finite horizon. The market is complete because the single Brownian shock is traded with volatility $\sigma\neq0$.

The family $\{H_\tau:\tau\text{ a stopping time}\}$, with $H_\infty=0$, is uniformly integrable. On a finite horizon this follows from the martingale property of $M$, and for the tail optional sampling gives
\[
 \E[H_\tau\chi_{\{\tau>T\}}]
 \leq e^{-rT}\E[M_\tau\chi_{\{\tau>T\}}]\leq e^{-rT}.
\]
This is the class D control of discounted linear price payoffs used in the stopping arguments.

\subsection{Terminal utility}
Let $q\leq0$ be of locally bounded variation with
\[
 \E\int_0^\infty e^{-\delta t}|q_t|dt<\infty,
 \qquad \E\int_{[0,\infty)}e^{-\delta t}dq_t^-<\infty.
\]
Pathwise integration by parts on $[0,T]$ gives
\[
 e^{-\delta T}q_T-q_{0-}
 =\int_{[0,T]}e^{-\delta t}(dq_t^+-dq_t^-)
          -\delta\int_0^T e^{-\delta t}q_tdt.
\]
Because $q_T\leq0$ and $q_t\leq0$,
\[
 \int_{[0,T]}e^{-\delta t}dq_t^+
 \leq |q_{0-}|+\int_{[0,T]}e^{-\delta t}dq_t^-,
\]
so both discounted variations are integrable on the infinite horizon, and the identity shows that $e^{-\delta T}q_T$ converges in $L^1$. The limit is zero, since a nonzero negative limit would contradict the integrability of $\int e^{-\delta t}|q_t|dt$. This proves \eqref{eq:utilityTV}. When $\lambda=0$ and $\alpha>0$, the discounted upward variation is integrable. The integrability of the negative utility flow gives a sequence $T_n\nearrow\infty$ with $\E[e^{-\delta T_n}|q_{T_n}|]\to0$, and taking expectations in the same identity along this sequence and using monotone convergence shows that the discounted downward variation is integrable. The previous argument then gives the full terminal limit.

\section{The discrete switching reconstruction}\label{sec:discreterec}
In the implementation the marginal game and the switching values are combined as follows. Write $\mathcal Q_h=e^{-\delta h}\Pi_h$, let $E_0=\mathcal Q_hF_0$ and $E_1=f_h+\mathcal Q_hF_1$ be the continuation values without a switch, and set $D=E_1-E_0$. A positive cycle cost excludes two opposite instantaneous switches, and the two optimized values are
\[
 F_0=E_0+(D-\alpha)^+,
 \qquad F_1=E_0+\max\{D,-\lambda\}.
\]
Consequently
\[
 F_1-F_0=\operatorname{clip}_{[-\lambda,\alpha]}D,
 \qquad (I-\mathcal Q_h)F_0=(f_h+\mathcal Q_hv-\alpha)^+.
\]
The first equation is the discrete double obstacle equation, and the second is the linear resolvent equation from which we reconstruct the off value after solving it. Without truncation, the exact transition and reward preserve the value $g=f_h+\mathcal Q_hg$ of a layer that is always on. At the end of the price grid this identity requires an affine tail, since clipping the price would change it, and the code includes this correction and reports the resulting residual.

\section{Remarks on the regularity methods}\label{sec:methods}
\subsection{Elliptic lifting and weighted estimates}\label{sec:audit}
The following observation explains why results for the Heston model cannot be applied directly to the present operator.
\begin{proposition}\label{prop:nolift}
No invertible smooth change of coordinates makes the principal matrix of \eqref{eq:generator} positive definite. Moreover, no exact multiplicative lift $\widetilde u(x,\zeta)=\vartheta(\zeta)u(x)$ into a uniformly elliptic operator preserves the original operator for every test function $u$ while keeping $x$ as the original coordinates.
\end{proposition}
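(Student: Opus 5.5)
The plan is to treat both assertions as consequences of one fact: the rank of the principal (second order) symbol is invariant under the operations allowed, and the principal matrix of \eqref{eq:generator} has rank one everywhere. From \eqref{eq:generator} the principal matrix is
\[
 a(z,b)=\frac12\begin{pmatrix} b^2z^2/\sigma^2 & \eta bz/\sigma\\ \eta bz/\sigma & \eta^2\end{pmatrix}
 =\frac12\,\xi\xi^{\top},\qquad \xi=\Bigl(\frac{bz}{\sigma},\ \eta\Bigr),
\]
so $\det a\equiv0$ and $a\neq0$ because $\eta>0$. This is the rank one structure already used in \cref{thm:intrinsic}, where $\xi$ is the diffusion direction.

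\emph{Coordinate changes.} Let $\Phi$ be a smooth diffeomorphism with Jacobian $J$. By the chain rule, the second order part of $\A$ written in the new coordinates has matrix $J\,a\,J^{\top}$ at corresponding points. The first order terms generated by $D^2\Phi$ do not affect the principal part. Since $J$ is invertible, $\operatorname{rank}(JaJ^{\top})=\operatorname{rank}a=1<2$. Hence the new principal matrix is singular everywhere, and it is never positive definite. The straightening \eqref{eq:straighten} is the special case in which $JaJ^{\top}$ becomes $\operatorname{diag}(0,\eta^2/2)$. I would record this as the explicit instance.

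\emph{Multiplicative lifts.} Suppose $\widetilde{\mathcal L}$ is a second order operator in $(x,\zeta)$, with $x=(z,b)$ kept as coordinates and coefficients allowed to depend on $(x,\zeta)$, such that
\[
 \widetilde{\mathcal L}\bigl[\vartheta(\zeta)u(x)\bigr]=\vartheta(\zeta)\,(\A u)(x)
\]
for every smooth test function $u$. Expanding the left side by the product rule, the only terms containing second derivatives of $u$ are $\vartheta(\zeta)\,\widetilde a_{xx}(x,\zeta):D^2u(x)$. The mixed block $\widetilde a_{x\zeta}$ multiplies $\vartheta'(\zeta)\,\nabla u$, and $\widetilde a_{\zeta\zeta}$ multiplies $\vartheta''(\zeta)\,u$. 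Fix a point $(x_0,\zeta_0)$ with $\vartheta(\zeta_0)\neq0$; such a point exists, since otherwise the lift is trivial. Choose test functions that near $x_0$ are quadratic polynomials with $u(x_0)=0$, $\nabla u(x_0)=0$ and arbitrary Hessian $S$. Both sides are then linear in $S$ alone, and comparing gives $\widetilde a_{xx}(x_0,\zeta_0)=a(x_0)$. Consequently $\xi^{\perp}=(\eta,-bz/\sigma)$, extended by a zero $\zeta$-component, is a nonzero vector $e$ with $e^{\top}\widetilde a\,e=(\xi^{\perp})^{\top}a\,\xi^{\perp}=0$. So $\widetilde a$ is not positive definite at $(x_0,\zeta_0)$, and uniform ellipticity fails.

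\emph{Expected obstacle.} The main point to get right is making precise what ``preserves the original operator'' permits. The argument must allow coefficients of $\widetilde{\mathcal L}$ that depend on $\zeta$, and arbitrary lower order terms, and still isolate the $D^2u$ terms. The quadratic test functions with vanishing value and gradient at $x_0$ are chosen exactly for this. I would also check that the null vector $\xi^{\perp}$ lies in the $x$-directions, so that enlarging the space by $\zeta$ cannot supply the missing ellipticity. This is what distinguishes the present operator from the Heston case, where the $x$-block itself is nondegenerate away from zero variance.
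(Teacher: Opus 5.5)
Your proposal is correct and follows the paper's proof: rank invariance of $JaJ^\top$ under an invertible Jacobian gives the first claim. For the lift, you identify the $x$-block of the lifted principal matrix with the rank-one matrix $a$, and its null vector, extended by a zero $\zeta$-component, gives a degenerate direction. Your quadratic test functions with vanishing value and gradient at $x_0$, and the explicit choice $\vartheta(\zeta_0)\neq0$, make precise the step the paper states briefly as \emph{comparing the second derivatives in $x$ for arbitrary $u$}.
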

\begin{proof}
An invertible Jacobian $J$ sends a principal matrix $A$ to $JAJ^\top$, which has the same rank. For the lift, comparing the second derivatives in $x$ for arbitrary $u$ shows that the original principal matrix is the $x$ block of the lifted matrix. A principal block of a positive definite matrix is positive definite, whereas here it has a nonzero kernel, and testing the lifted quadratic form on $(\xi,0)$ with $A\xi=0$ gives a contradiction.
\end{proof}

Daskalopoulos and Feehan \cite[Theorem~1.1]{DF} treat a matrix equal to the distance from the boundary of a half-plane times a positive definite matrix, and their weighted norm controls $yD^2u$, $Du$ and $u$. Our matrix has a kernel at every interior point, so we prove the estimate \eqref{eq:Ubbpenalty} directly instead of adapting their weight.

The work \cite{JKO} treats positive variance with correlation strictly between $-1$ and $1$, and its lifting lemma assumes that the spatial matrix is uniformly positive definite on the localized domain. The contact orthant in that proof lies in a fixed slice of parabolic time and rules out a nonzero singular spatial quadratic after the lifting. Here the free boundary $s=h_f(b)$ is first a graph over the transport coordinate, and a Lipschitz cone in $(s,b)$ need not be an open spatial cone at fixed time. We therefore obtain a spatial graph over the set \eqref{eq:smoothset} and verify a positive denominator for the linear boundary Harnack theorem. This route does not use the time monotonicity assumed in the classification of obstacle problems in \cite{AK}.

\subsection{An example of a singular time graph}\label{sec:cusp}
The following elementary example shows why the estimates proved here and general obstacle theory do not combine into smoothness at every point of an arbitrary time graph. Let $\Theta(y)=e^y-1-y$ for $y\geq0$ and define on $\R^2$
\begin{equation}
 u(t,x)=\begin{cases}
 \Theta((|x|+t)^+),&t\leq0,\\
 \E[\Theta(|x+\sqrt2W_t|)]-t,&t>0.
 \end{cases}\label{eq:cuspexample}
\end{equation}
Then $u\geq0$, $u_t\geq0$, $u\in W^{2,1}_\infty$ locally, and
\begin{equation}
 u_t-u_{xx}=-\chi_{\{u>0\}}\quad\text{almost everywhere},\label{eq:cusppde}
\end{equation}
with $|u_x|\leq u_t$ where $u>0$. Nevertheless, the free boundary is the time graph $t=-|x|$, which is not smooth at the origin.

For $t<0$ we differentiate in the two components where $u>0$. There $u_t=e^{|x|+t}-1$, $u_{xx}=e^{|x|+t}$ and $|u_x|=u_t$, and the value and the spatial derivative vanish at the endpoints. At $t=0$ the function $\Theta(|x|)$ has continuous first and second spatial derivatives, with second derivative $e^{|x|}$, so the heat semigroup formula for $t>0$ gives $u_t=\E e^{|x+\sqrt2W_t|}-1>0$ and \eqref{eq:cusppde}. Positivity follows from the nonnegative initial function and this time derivative, and Jensen's inequality applied to the derivative of $\Theta(|x|)$ gives $|u_x|\leq\E(e^{|x+\sqrt2W_t|}-1)=u_t$. The derivative bounds are locally uniform on both sides of $t=0$. The contact set is $\{|x|\leq-t\}$ for $t\leq0$ and empty for $t>0$, which proves the claim about the free boundary.

The example has smooth constant forcing, continuous spatial smooth fit and a monotonicity cone, yet at its tip the spatial contact interval collapses. A spatial thickness or transversality condition therefore cannot be dropped from a higher order parabolic free boundary argument. For the financial problem, \cref{thm:smoothtails} verifies the required geometry on an explicit set, and whether singular points of this kind occur over the central interval is open.

\FloatBarrier

\section*{Data availability}
The code and the numerical data that support the findings of this study are available from the authors upon reasonable request.

\end{document}